\crefname{hypothesis}{Hypothesis}{Hypotheses}
\title{FIRM: Federated Image Reconstruction using Multimodal Tomographic Data\thanks{Submitted to the editors DATE.
\funding{This work was funded by the DOE.}}}
\author{Geunyeong Byeon\thanks{School of Computing and Augmented Intelligence, Arizona State University, Tempe, AZ \email{\{geunyeong.byeon, mryu2\}@asu.edu}.}
\and Minseok Ryu\footnotemark[2]
\and Zichao Wendy Di\thanks{Mathematics and Computer Science, Argonne National Laboratory, Lemont, IL \email{\{wendydi, kimk\}@anl.gov}.}
\and Kibaek Kim\footnotemark[3]
}
\newcommand{\wendy}[1]           
{\textcolor{red}{#1}}
\DeclareMathOperator*{\argmin}{arg\,min}
\begin{document}

\maketitle

\begin{abstract}
    We propose a federated algorithm for reconstructing images using multimodal tomographic data sourced from dispersed locations, addressing the challenges of traditional unimodal approaches that are prone to noise and reduced image quality. Our approach formulates a joint inverse optimization problem incorporating multimodality constraints and solves it in a federated framework through local gradient computations complemented by lightweight central operations, ensuring data decentralization. Leveraging the connection between our federated algorithm and the quadratic penalty method, we introduce an adaptive step-size rule with guaranteed sublinear convergence and further suggest its extension to augmented Lagrangian framework. Numerical results demonstrate its superior computational efficiency and improved image reconstruction quality.
\end{abstract}

\vspace{-2mm}
\begin{keywords}
Convergence analysis, federated algorithms, image reconstruction, multimodal tomographic data.
\end{keywords}
\vspace{-2mm}
\begin{MSCcodes}
68W15, 90C06, 90C25, 68U10
\end{MSCcodes}
\vspace{-3mm}
\section{Introduction}
Tomographic imaging reconstructs objects from projections onto lower-dimensional spaces, obtained by scanning with penetrating waves from various angles, with diverse applications in fields such as biology, physics, and astronomy. However, achieving high-quality reconstruction is challenging due to the ill-posed nature of the underlying inverse problem, which is often formulated as a least squares problem. Insufficient measurements—caused by factors such as low-dose requirements in computed tomography (CT), restricted projection angles, limited field of view, and low signal-to-noise ratios—can result in non-unique solutions.

Different tomographic modalities, such as CT and magnetic resonance, have similar mathematical modeling and experimental challenges. Among these, X-ray transmission (XRT) tomography is widely used, while X-ray fluorescence (XRF) tomography has gained attention for its ability to trace the elemental composition of samples \cite{paunesku2006x,rust1998x,hogan_fluorescent_1991}. Combining these modalities shows promise for improving reconstruction quality \cite{di2016optimization,di2017joint}. However, centralizing multimodal data for the reconstruction is impractical or undesirable due to the large volumes of data generated by dispersed synchrotron facilities, which pose challenges for data transfer, storage, and raise privacy concerns. To overcome this, we propose a novel federated algorithm that solves the joint inverse optimization problem without data centralization.

\subsection{Problem statement}
This paper proposes a federated learning algorithm for solving a linear least squares problem over a convex feasible region $\mathcal W$ defined by nonnegativity constraints and a specific relationship among variable subvectors:
\vspace{-4mm}
\begin{subequations}
\label{basic_model}    
\begin{align}
\min_w \ & f(w) := \sum_{i=1}^N \|Aw_i - b_i\|^2  \\
\mbox{s.t.} \ & w \in \mathcal{W} := \Big\{ w := [w_1; \ldots ; w_N] \in \mathbb{R}^{Nn}_+:  w_N = \sum_{i=1}^{N-1} c_i w_i \Big\}
\end{align}
\end{subequations}
where \(N\) is the number of data sources (referred to as agents in the federated framework), and \(w_i \in \mathbb{R}^n_+\) is a non-negative decision variable vector corresponding to the reconstructed image for each agent \(i\). The matrix \(A\) and vector \(b_i\) are given, with \(A\) representing the discrete Radon transform and \(b_i\) denoting the measurements obtained by agent \(i\). The vector \(c := [c_1; \ldots; c_{N-1}] \in \mathbb{R}_+^{N-1}\) defines the relationship among the subvectors, satisfying \(\sum_{i=1}^{N-1} c_i \in (0, 1]\). The equality constraint ensures that \(w_N\) is a linear combination of the subvectors \(w_1, \ldots, w_{N-1}\). Further details on this formulation are provided in \S\ref{sec:method}.

\subsection{Related literature}

Distributed image reconstruction has been considered in a limited number of literature. For instance, alternating direction method of multipliers (ADMM) and its variants have been developed for distributed image reconstruction in astronomy \cite{ferrari2014distributed} and medicine \cite{wang2013distributed,wang2015distributed}. The existing work mainly aims to address large-scale data for image reconstruction by distributing algorithmic steps to multiple compute nodes \cite{wang2013distributed,ferrari2014distributed,majchrowicz2020multi}. Unlike the existing distributed algorithms, the federated algorithm in this paper considers distributed data sources without centralizing them. 
Moreover, the existing work considers uni-modal data, whereas our work requires modeling of the relationship among the different modalities by an equality constraint. 
Existing methods (e.g., \cite{di2016optimization,di2017joint,schwartz2024imaging}) for image reconstruction from multimodal data assume data collocation, not immediately applicable to the federated setting in this paper.

Most existing FL algorithms cannot be directly applicable to solving \eqref{basic_model}.
This limitation arises because these algorithms train a global model $w$ by leveraging local model parameters $\{w_i\}$ trained individually such that $w = w_1 = \ldots = w_N$. 
For example, the widely used federated averaging algorithm and its variants \cite{mcmahan2017communication, karimireddy2020scaffold,li2021fedbn,zhang2021fedpd,li2020federated} achieve this by averaging local model parameters at the server.
In contrast, our model aims to enhance local models by incorporating domain knowledge, specifically that the local models are interconnected through a linear combination, expressed by the constraint $w_N = \sum_{i=1}^{N-1} c_i w_i$.
Such \emph{global} constraints differ fundamentally from the local constraints typically imposed on individual agents in the FL literature \cite{di2023ppfl, ryu2022differentially, feng2020learning, lin2017collaborative}.
Examples of global constraints in FL include fairness considerations \cite{shen2021agnostic} and handling class imbalances \cite{shen2021agnostic}. FL algorithms designed to handle global constraints, such as projected gradient descent (PGD) \cite{chu2021fedfair} and the augmented Lagrangian method \cite{shen2021agnostic}, are not well-suited to our setting. Specifically, the projection step in PGD requires solving a convex optimization problem at each iteration, which can be computationally expensive and necessitates access to advanced optimization solvers—resources that may not be available on servers restricted to simple vector operations.
Similarly, the augmented Lagrangian approach introduces inefficiencies for our problem. This method results in a multi-block ADMM, where the number of blocks increases with the number of agents ($N$), necessitating serial updates for each block. Furthermore, as highlighted in \cite{chen2016direct}, ADMM may fail to converge when $N \geq 3$.
To address these challenges, we propose a novel FL algorithm tailored to solve \eqref{basic_model}. Our method requires only simple vector operations at the server while guaranteeing convergence.

\subsection{Contributions}
We summarize the contribution of our work as follows: 
\begin{itemize}[left=0em]
  \item {\emph{Multimodality modeling.}} We formulate a joint inverse optimization problem for reconstructing images from XRT and XRF tomographic datasets as a constrained least squares problem, leveraging their multimodal characteristics through linear combination constraints grounded in their physical relationships.
  \item {\emph{Federated algorithm.}} We propose a federated algorithm, termed \texttt{FIRM},  which solves the proposed \emph{constrained} least squares problem via local gradient computations and simple vector operations at the server. 
  \item {\emph{Convergence analyses.}} We establish the connection between \texttt{FIRM} and the well-established quadratic penalty (QP) methods. Built upon this connection, we propose an adaptive step size rule that guarantees the convergence to an optimal solution with a sublinear rate of $\mathcal{O}(1/\epsilon^2)$, and provides condition for which the rate becomes $\mathcal{O}(1/\epsilon)$, matching that of the projected gradient method. The connection also inspires an augmented Lagrangian extension, termed \texttt{FIRM}$^+$. 
  \item {\emph{Numerical experiments}} 
 demonstrate (i) a remarkable computational efficiency and stability of \texttt{FIRM}, and (ii) the enhanced quality of image reconstruction achieved through multimodality under various noise levels and algorithmic settings.
\end{itemize}


\subsection{Notation and organization of the paper}
We denote by \(\mathbb{Z}_{\ge 0}\) the set of nonnegative integers. For a positive integer $N$, we define \([N] := \{1, 2, \ldots, N\}\). We use $I$ to denote the identity matrix, $\boldsymbol{1}$ to represent a vector of all ones, and $\mbox{diag}(A)$ to indicate a block diagonal matrix with $A$ on its diagonal; their dimensions are either clear from the context or specified explicitly using subscripts.
We use \(\lambda_{\max}(A)\) and \(\lambda_{\min}(A)\) to indicate the largest and the smallest eigenvalues of matrix \(A\), respectively. 
For a vector \(v\), \(\|v\|\) represents the Euclidean norm of \(v\), i.e., \(\|v\| = \sqrt{v^T v}\), unless otherwise stated, and for a matrix \(A\), \(\|A\|\) denotes the spectral norm of \(A\), i.e., \(\|A\| = \sqrt{\lambda_{\max}(A^T A)}\). 
 For a vector $v \in \mathbb R^n$ and an index set $\mathcal J \subseteq [n]$, $v_\mathcal J$ denotes the subvector of $v$ corresponding to the index set $\mathcal J$. 
 For a convex set \(\mathcal{W}\) and a point \(v\), \(\mathcal{P}_{\mathcal{W}}(v)\) represents a projection operator that projects the point \(v\) onto the set \(\mathcal{W}\) in the Euclidean sense, i.e., \(\mathcal{P}_{\mathcal{W}}(v) = \arg\min_{w \in \mathcal{W}} \|w - v\|^2\); this projection is well-defined for convex $\mathcal W$, as the minimizer for the projection problem is known to exist and be unique. For cases where \(\mathcal{W}\) is the nonnegative orthant, we define \((\cdot)_+ := \mathcal{P}_{\mathcal{W}}(\cdot)\). We let $\mathcal {N}_{\ge 0}(u^*)$ denotes the normal cone of the nonnegative orthant at $u^*$, defined as $\{s: s^\top (u-u^*) \le 0, \forall u \ge 0\}$. For a scalar $\epsilon > 0$, $\mathcal{B}(\epsilon)$ represents the Euclidean norm ball of radius $\epsilon$ centered at origin, i.e., $\mathcal{B}(\epsilon) = \{u : \|u\|\le \epsilon\}$.

 For the rest of the paper, we present the mathematical model for federated multimodal image reconstruction in \S\ref{sec:method}, propose federated algorithms along with their convergence analyses in \S\ref{sec:alg}, and demonstrate the effectiveness of our approach through numerical experiments in \S\ref{sec:experiments}. Finally, we conclude the paper in \S\ref{sec:conclusions}.

\section{Mathematical Model}
\label{sec:method} We formulate the problem of reconstructing images from multimodal datasets, specifically the XRT and XRF datasets, within the structure of \eqref{basic_model}. We consider a simplified 2D model of an object. Specifically, each beamlet is characterized by a pair $(\theta, \tau)$, where $\theta$ denotes the X-ray beam angle and $\tau$ indicates the beamlet index. Let $\Theta$ and $\mathcal{T}$ represent the sets of beam angles and beamlet indices, respectively. To model the interaction of each beamlet with the object, the object is discretized into spatial voxels. Let $v$ index the set $\mathcal{V}$ of pixels used for this discretization (see Fig.~\ref{fig:xrt}). The intersection length (in $\mathrm{cm}$) of beamlet $(\theta, \tau) \in \Theta \times \mathcal T$ with pixel $v \in \mathcal V$ is characterized by $A \in \mathbb{R}^{|\Theta||\mathcal{T}| \times |\mathcal{V}|}$, representing the discrete Radon transform operator \cite{radon1986determination} for a drift-free experimental configuration.

\begin{figure}[h!]
\centering
\includegraphics[width=0.35\textwidth]{./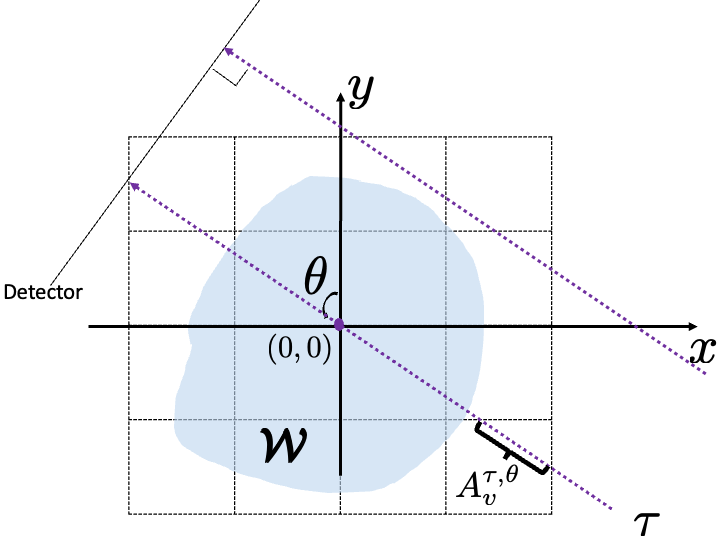}   
\caption{Illustration of the discrete tomographic geometry.}
\label{fig:xrt} 
\end{figure}

In principle, XRF is a process involving the emission of characteristic X-rays from a material, which can be analyzed to determine its elemental composition.
Let $\mathcal E$ denote the collection of the elements. 
We use $w_e \in \mathbb R^{|\mathcal V|}$ to denote its density (in $\mathrm{g}\,\mathrm{cm}^{-3}$) of element $e \in \mathcal E$ across the pixels $\mathcal V$, and let $w = (w_e)_{e \in \mathcal E}$. 
Following common practice, we assume XRF follows a linear forward process \cite{de2010quantitative} governed by the discrete Radon transform $A$. Therefore, for each $e \in \mathcal E$, given the flourescence data $b_{\texttt{XRF},e}\in \mathbb{R}^{|\Theta||\mathcal T|}$, the XRF model is expressed as $f_{\texttt{XRF}}(w_e)=Aw_e=b_{\texttt{XRF},e}$.

On the other hand, XRT is a process of X-ray attenuation that captures the amount of X-ray absorption by an object, which depends on the object’s density and composition. The goal of reconstruction from XRT data is to recover the linear attenuation coefficient $\mu_v$ (in cm$^{-1}$) for each pixel $v$ from the measurement $b_{\texttt{XRT}}\in \mathbb{R}^{|\Theta||\mathcal T|}$; we let $\mu = (\mu_v)_{v \in \mathcal V}$. The corresponding XRT model is $I_0\exp\left\{-A\mu\right\}=b_{\texttt{XRT}}$, where $I_0$ is the given incident beam energy, and the exponential is applied componentwise. By applying a logarithmic transformation, we convert the original nonlinear XRT model to its linear form as $f_{\texttt{XRT}}(\mu)=A\mu=-\log(\frac{1}{I_0}b_{\texttt{XRT}})$, where the logarithm is again applied componentwise.

Now, we establish a link between XRT and XRF modalities to fully exploit the multimodal benefit, allowing one modality to potentially compensate for the ill-posedness of the other. Specifically, the attenuation coefficient vector $\mu$ depend on the density vector $w$ as follows: $\mu=\sum_{e \in \mathcal E} c_e w_{e}$, where $c_e\in \mathbb{R}$ denotes the mass attenuation coefficient (in cm$^2$g$^{-1}$) of element $e \in \mathcal E$. The vector $c:=(c_e)_{e \in \mathcal E}$ is well characterized for individual elements and is tabulated in various sources \cite{thompsonx}. Using this relationship, the original XRT model can be reformulated in terms of the elemental density $w$; specifically, $f_{\texttt{XRT}}(w) = A(\sum_{e \in \mathcal E}c_e w_e) = -\log(\frac{1}{I_0}b_{\texttt{XRT}})$. Instead of solving individual modality as a common practice, we can then formulate the following joint inverse problem to reconstruct an image using both multimodal XRF and XRT datasets:
\begin{equation}
\label{eq:joint}
\min_{w\geq 0} \sum_{e \in \mathcal E}\|A w_e-b_{\texttt{XRF},e}\|^2 + \|A(\sum_{e \in \mathcal E} c_e w_e)+\log(\frac{1}{I_0}b_{\texttt{XRT}})\|^2,
\end{equation}
where the nonnegative constraint on $w$ is a consequence of natural physics on the property of interests.

In many applications, individual XRF data $b_{\texttt{XRF},e}$ for each element $e \in \mathcal E$ and XRT data $b_{\texttt{XRT}}$ are acquired at different times and using different instruments. We assume that each dataset is accessible only through a separate local agent. Let $N$ represent the total number of local agents, such that $N = |\mathcal{E}| + 1$. In this framework: (i) each local agent $i \in [N-1]$ has access to distinct $b_{\texttt{XRF},e}$ for some $e \in \mathcal{E}$; we let $b_i = b_{\texttt{XRF},e}$; (ii) the $N$-th local agent corresponds to the XRT data, with the dataset defined as $b_N = -\log\left(\frac{1}{I_0}b_{\texttt{XRT}}\right)$.  
This formulation allows the multimodal joint inverse problem to be modeled in the form of \eqref{basic_model}.

\section{Federated Algorithms}
\label{sec:alg}
Our goal is to solve the optimization problem \eqref{basic_model} within a federated setting, where each agent $i \in [N]$ has access only to its local data $b_i$. 
To accomplish this, we interpret the problem \eqref{basic_model} as a constrained federated learning (FL) model and propose tailored FL algorithms. 
Specifically, our FL algorithms are designed for a typical FL scenario where (i) each agent $i \in [N]$ can compute a feasible local solution $v_i \approx \argmin_{v} \|Av-b_i\|^2$ using its own data and computational resources, and (ii) a central server exists to compute an optimal solution $\{w_i^*\}_{i=1}^N$ of \eqref{basic_model} by performing simple vector operations on the local solutions $\{v_i\}_{i=1}^N$.

The projected gradient descent (PGD) algorithm, a common approach for solving constrained convex optimization problems, can be adapted to the federated setting, denoted as \texttt{FedPGD}. In this context, each agent $i \in [N]$ performs a local gradient step, while the projection step is handled by a central server after collecting local solutions $v_i$ from all agents. Specifically, the following two steps are repeated, where $t\in \mathbb Z_{\ge 0}$ denotes the iteration round:
\begin{subequations}
\label{pgd}    
\begin{align}
& v_i^t \gets w_i^t - \alpha_t g_i^t, \ \forall i \in [N], \label{pgd_1}   \\
& w^{t+1}  \gets \mathcal{P}_{\mathcal{W}} (v^t) = \argmin_{w \in \mathcal{W}} \|w - v^t\|^2. \label{pgd_2}
\end{align}
\end{subequations}
Here, 
$w^t_i$ is the current solution of an agent $i \in [N]$, 
$\alpha_t > 0$ is a step size,
$g_i^t := g_i(w_i^t)=2 A^T (Aw_i^t-b_i)$ is the gradient of $\|Aw_i-b_i\|^2$ at $w_i^t$, and
$v^t := [v^t_1; \ldots; v^t_N]$ is the concatenated column vector. 

However, \texttt{FedPGD} may not be efficient for solving \eqref{basic_model} due to the expensive projection step \eqref{pgd_2} at the server, which requires more than simple vector operations and can become computationally burdensome to conduct at each iteration (see, e.g., Remark \ref{rema:f-PGD}). To address this issue, we propose an algorithm that \emph{replaces the costly projection step with simpler vector operations}. 

For the remainder of this section, we present an FL algorithm for solving \eqref{basic_model} in \S\ref{sec:prop_algo}, along with its convergence analysis by establishing a connection to the quadratic penalty (QP) method in \S\ref{sec:relation_penalty}. Furthermore, we outline an extension of the algorithm to the augmented Lagrangian (AL) method in \S\ref{sec:augmented_lagrangian}.

\subsection{Proposed Algorithm} \label{sec:prop_algo}

We propose an algorithm, denoted as \texttt{FIRM}, for solving \eqref{basic_model}, that replaces the expensive projection step \eqref{pgd_2} in \texttt{FedPGD} with simple vector operations as follows: 
\begin{subequations}
\label{our_approach}
\begin{align}
&x^t \gets \frac{1}{2}  ( y^t + v^t_N ) \text{ where } y^t := \sum_{i=1}^{N-1} c_i v^t_i\label{our_approach_avg} \\ 
& z^t_i \gets
\begin{cases}
v^t_i + c_i (x^t - y^t) & \text{ for } i \in [N-1], \\
x^t & \text{ for } i = N,
\end{cases}  \label{our_approach_update} \\
& w^{t+1}_i \gets (z^t_i)_+, \ \forall i \in [N], \label{clipping}
\end{align}
\end{subequations}
where $(\cdot)_+$ denotes the projection onto the nonnegative orthant.

This is inspired by the widely-used federated averaging algorithm, which updates the global model at the server by averaging local models in each round $t \in \mathbb{Z}_{\geq 0}$.
Specifically, after receiving the local solution $v^t_i$ following step \eqref{pgd_1}, the server computes an average of the last agent's local solution $v^t_N$ and a linear combination of the remaining $N-1$ local solutions, as given by \eqref{our_approach_avg}. The server then updates the local solutions of the first $N-1$ agents based on the current disagreement, measured by $x^t - y^t = \frac{1}{2}(v_N^t - y^t)$, scaled by each agent's contribution, approximated by $c_i$. The solution of the last agent is replaced with the averaged solution $x^t$, as described in \eqref{our_approach_update}.
Next, each element of the updated local solution, $z_i^t$, is clipped to satisfy the nonnegativity requirement on $w$, as shown in \eqref{clipping}. The resulting $w_i^{t+1}$ is then sent to each agent $i\in[N]$ for use in local training in the following round $t+1$. An overview of our proposed \texttt{FIRM} is provided in Figure \ref{fig:overview_firm}.

\begin{figure}[h!]
\centering
\includegraphics[width=0.5\textwidth]{./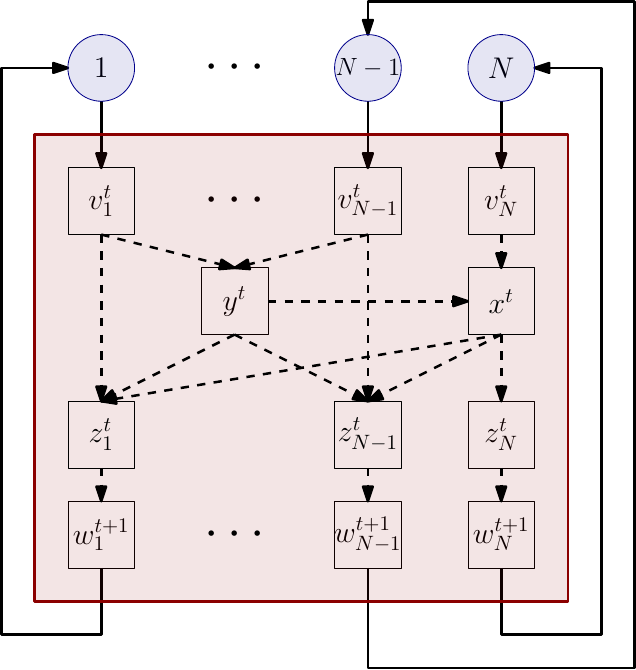}   
\caption{The overview of \texttt{FIRM}. At each round $t$, $N$ agents (represented by blue circles) send their local solutions $\{v^t_i\}_{i=1}^N$ computed by the formulation \eqref{pgd_1} to the server (represented by a red box). Then the server conducts a set of vector operations as in \eqref{our_approach}.
The resulting $w^{t+1}_i$ is then sent to the agent $i \in [N]$ which will be used as an initial point for local training in the next round.}
\label{fig:overview_firm} 
\end{figure}

\begin{remark}[Less computation load at the server compared to \texttt{FedPGD}]
The vector operations and clipping steps outlined in \eqref{our_approach} can be significantly more computationally efficient than the projection step described in \eqref{pgd_2}. The projection step in \eqref{pgd_2} involves projecting a vector onto a polyhedron $\mathcal W$, which has a complexity of $O(N^3n^3)$. In contrast, the computations required for \eqref{our_approach} scale with $O(Nn)$. Moreover, the proposed algorithm can fully utilize GPU capabilities, as the steps in \eqref{our_approach} can be performed in parallel using multiple GPU threads. This is in contrast to \texttt{FedPGD}, where the projection step \eqref{pgd_2} may require optimization solvers that run on CPUs. Consequently, this may incur significant overhead due to the time required for transferring information between the CPU and GPU, such as (i) $v^t$ in \eqref{pgd_1} from GPU to CPU, and (ii) $w^{t+1}$ in \eqref{pgd_2} from CPU to GPU.
\label{rema:f-PGD}
\end{remark}

By carefully designing the step size update, i.e., $\alpha_t$ in \eqref{pgd_1}, we ensure that the \emph{proposed procedure converges to an optimal solution of \eqref{basic_model}, without negatively impacting the convergence rate of \texttt{FedPGD} under a certain condition}, while significantly reducing the computational burden at each iteration. The complete algorithm, including the step size rule, is presented in Algorithm~\ref{algo}. As will be discussed later in \S\ref{sec:relation_penalty}, \texttt{FIRM} can be interpreted as an inexact QP method, where a sequence of penalized versions of a modified \eqref{basic_model} is solved with increasingly large penalties \(1/\eta_k\) imposed on constraint violations. In this framework, the outer loop updates the penalty parameter \(\eta_k\) based on a given decay rate \(r\). For a fixed \(\eta_k\), the inner loop (with iteration counter \(t\)) solves the corresponding penalized problem to a specified accuracy. In \texttt{FIRM}, the penalty parameter \(\eta_k\) governs both the termination tolerance of the inner loop (Line~\ref{line:inner-termination}) and the step size (Line~\ref{line:agent}).
\begin{algorithm}
\caption{\texttt{FIRM}} 
\begin{algorithmic}[1]
\Require Iteration counter for the outer-loop $k \gets 1$; initial penalty parameter $\eta_1 \gets \beta \frac{2-\sum_{i=1}^{N-1}c_i^2}{4\lambda_{\max}(A^\top A)}$ for some constant $\beta \geq 1$; initial solution $u^0 \in \mathbb R^{Nn}$; penalty parameter decay rate $r \in (0,1)$; termination tolerance $\epsilon >0$
\While{$\eta_k \ge \epsilon$}
\State $t \gets 0$; $w^{t}\gets u^{k-1}$ 
\While{\texttt True}
\State Send $w^t_i$ and $\alpha_{t} = \eta_k$ to each agent $i \in [N]$; (each agent performs \eqref{pgd_1})\label{line:agent}
\State Collect $v^t$ from agents and perform \eqref{our_approach} \label{line:server}
\If{$\|w^{t+1} - w^t\| \le \eta_k^2$}\label{line:inner-termination}
\State \texttt{Break}
\EndIf
\State $t \gets t+1$
\EndWhile
\State $u^{k} \gets w^{t+1}$
\State $\eta_{k+1} \gets r \eta_k$ \label{line:update}
\State $k \gets k+1$; 
\EndWhile
\end{algorithmic}
\label{algo}
\end{algorithm}


\subsection{Relation of \texttt{FIRM} to the Quadratic Penalty Method} \label{sec:relation_penalty}
In this section we demonstrate that \texttt{FIRM} falls within the class of quadratic penalty (QP) methods. Building on this connection, we establish the convergence of \texttt{FIRM} to an optimal solution of \eqref{basic_model} and its rate of convergence. For clarity in presentation, we introduce an $n \times Nn$-matrix $D = [-c_1I_n ~ -c_2 I_n ~ \cdots ~ -c_{N-1}I_n ~ I_n]$,
which allows us to express
\begin{align}
D w = w_N - \sum_{i=1}^{N-1} c_i w_i, \label{def_Delta}
\end{align}
representing the residual of the linear combination constraint applied to the vectors $\{w_i\}_{i=1}^N$.
Note that \eqref{our_approach} can be rewritten as 
\begin{align*}
w^{t+1}_i &\gets (z_i^t)_+  \gets \begin{cases}
(v_i^t + c_i(x^t-y^t))_+\\
(x^t)_+
\end{cases} = \begin{cases}
(v_i^t + \frac{c_i}{2}Dv^t)_+\\
(v_N^t - \frac{1}{2}Dv^t)_+.
\end{cases}
\end{align*}
Therefore, we rewrite Lines \ref{line:agent}-\ref{line:server} in Algorithm \ref{algo} as follows:
\begin{subequations}
 \label{firm}   
\begin{align}  
& w^{t+1}_i \gets \Big( w^t_i - \eta_k g^t_i + d^t_i \Big)_+,
\end{align}
where 
\begin{align}
d^t_i =
\begin{cases}
\frac{c_i}{2} Dv^t & \text{ for } i \in [N-1],\\ 
- \frac{1}{2} Dv^t & \text{ for } i = N.
\end{cases}      
\end{align}
\end{subequations}

\begin{proposition}
\texttt{FIRM} is an inexact first-order QP method for solving the following optimization model:
\begin{subequations}
\label{modified_model}
\begin{align}
\min_{w \in \mathbb{R}^{Nn}_+} \ & F(w):= \sum_{i=1}^{N} \|A w_i - b_i\|^2 - \frac{1}{2} \|A Dw - Db\|^2 \\
\mbox{s.t.} \ & Dw = 0,\label{penalized-constr}
\end{align}    
\end{subequations}
which is equivalent to the original model \eqref{basic_model}.
\label{prop:qp}
\end{proposition}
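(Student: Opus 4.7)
The plan is to establish two claims in turn: that \eqref{modified_model} has the same optimal set as \eqref{basic_model}, and that each inner-loop update of \texttt{FIRM} is exactly a projected-gradient step on a quadratic-penalty surrogate of \eqref{modified_model} whose penalty parameter is driven to infinity by the outer loop.

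The equivalence is immediate. On the common feasible set $\{w\ge 0:\,Dw=0\}$, the term $-\tfrac12\|ADw-Db\|^2$ collapses to the constant $-\tfrac12\|Db\|^2$, so $F(w)$ and $f(w)$ differ only by a constant on that set. Together with the identical feasible sets, this forces the two problems to share the same minimizers.

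For the QP interpretation, I introduce the penalty surrogate
\[
G_k(w) \;:=\; F(w) + \tfrac{1}{4\eta_k}\|Dw\|^2,
\]
which is the standard quadratic penalty of \eqref{modified_model} with penalty parameter $\rho_k := 1/(2\eta_k)$. The goal is to show that Lines~\ref{line:agent}--\ref{line:server} of Algorithm~\ref{algo} can be rewritten as $w^{t+1} = \mathcal{P}_{\mathbb R^{Nn}_+}\bigl(w^t - \eta_k\,\nabla G_k(w^t)\bigr)$. The key algebraic step is to expand $Dv^t$ using $v^t = w^t - \eta_k g^t$ and the block structure of $D$: since $g_i^t = 2A^\top(Aw_i^t-b_i)$, one obtains
\[
Dv^t \;=\; Dw^t - 2\eta_k\,A^\top(ADw^t - Db).
\]
Substituting this into $v_i^t + \tfrac{c_i}{2} Dv^t$ for $i\in[N-1]$ and into $v_N^t - \tfrac12 Dv^t$ for $i=N$, and grouping the resulting $g_i^t$, $A^\top(ADw^t-Db)$, and $Dw^t$ contributions, reproduces $w_i^t - \eta_k[\nabla G_k(w^t)]_i$ in each block, where $[\nabla F]_i = g_i + c_i A^\top(ADw-Db)$ for $i<N$, $[\nabla F]_N = g_N - A^\top(ADw-Db)$, and the penalty contributes $-\tfrac{c_i}{2\eta_k} Dw$ for $i<N$ and $\tfrac{1}{2\eta_k}Dw$ for $i=N$. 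The clipping \eqref{clipping} is precisely the projection onto $\mathbb R^{Nn}_+$, completing the identification.

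Combining the two parts, each inner loop of \texttt{FIRM} executes projected gradient on $\min_{w\ge 0} G_k(w)$ and terminates inexactly via $\|w^{t+1}-w^t\|\le \eta_k^2$, while the outer loop decreases $\eta_k$ (equivalently, inflates $\rho_k$), matching the template of an inexact first-order quadratic-penalty method for \eqref{modified_model}. The main obstacle is bookkeeping rather than conceptual: one must correctly identify the implicit penalty weight as $1/(4\eta_k)$ (not $1/(2\eta_k)$), which is obscured because the $-\tfrac12\|ADw-Db\|^2$ piece of $F$ cancels the $-2\eta_k A^\top(ADw^t-Db)$ contribution from $Dv^t$, leaving only the $Dw^t$ term (scaled by $1/2$ from \eqref{our_approach_avg}) to play the role of the penalty gradient in the server update.
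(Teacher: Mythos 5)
Your proposal is correct and follows essentially the same route as the paper: equivalence via the constant $-\tfrac12\|Db\|^2$ on the feasible set, then identifying Lines~\ref{line:agent}--\ref{line:server} with a projected-gradient step on $Q(\cdot;\eta_k)=F(\cdot)+\tfrac{1}{4\eta_k}\|D\cdot\|^2$ through the identity $Dv^t = Dw^t - 2\eta_k A^\top(ADw^t - Db)$. The only difference is presentational — you verify the update blockwise while the paper writes it compactly via $D^\top D$ — and your blockwise gradient formulas and the resulting cancellation bookkeeping check out.
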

\begin{proof}

The equivalence of models \eqref{modified_model} and \eqref{basic_model} is clear, as they share the same feasible region, and for any feasible $w$, the additional objective term, $-\frac{1}{2}\|Db\|^2$, remains constant.
Consider a quadratic penalty function of \eqref{modified_model}, where the penalty is imposed for the violation of \eqref{penalized-constr} with a penalty parameter $\frac{1}{4\eta}$:
\begin{align}
    Q(w;\eta)  := F(w)  + \frac{1}{4\eta} \|Dw\|^2. \label{quadratic_penalty_function}
\end{align}
QP methods solve \eqref{modified_model} by solving a series of penalized problems 
\begin{equation*}\min_{w \in \mathbb{R}^{Nn}_+} Q(w;\eta_k),\tag{\mbox{$P_{\eta_k}$}}\end{equation*}
where the penalty weight $\frac{1}{4\eta_k}$ increases to infinity, i.e., $\eta_k \rightarrow 0$. Consider an inexact first-order QP method that solves each penalized problem $(P_{\eta_k})$ approximately via several PGD steps with a step size of $\eta_k$. Specifically, for each $k$, a subroutine of PGD steps is performed, with the iteration counter $t \in \mathbb Z_{\ge 0}$, as follows: 
{\footnotesize\begin{align}
w^{t+1} & \gets \left( w^t - \eta_k \left\{ \begin{bmatrix}2A^T(Aw^t_1 - b_1) \\ \vdots \\ 2A^T(Aw^t_N - b_N)\end{bmatrix} - D^\top A^{\top}(A D w^t - Db)+ \frac{1}{2\eta_k} D^\top D w^t\right\}   \right)_+\\
&= \left( w^t - \eta_k g^t + \eta_k D^\top A^{\top}(A Dw^t - Db)  - \frac{1}{2} D^\top D w^t \right)_+,
\label{pgd_quadratic_penalty}
\end{align}}
where $g^t := [g_1^t; \cdots; g_N^t]$ is a concatenated column vector and $g_i^t$ is as defined in \eqref{pgd_1}.

We show that \eqref{pgd_quadratic_penalty} is equivalent to \eqref{firm}.
To see this, we rewrite \eqref{firm} as follows:
\begin{subequations}
\label{firm_derivation}
\begin{align}
w^{t+1} \gets \ &\Big( w^t - \eta_k g^t -\frac{1}{2} D^\top D v^t \Big)_+  \\
= \ & \Big( w^t - \eta_k g^t - \frac{1}{2} D^\top (D w^t - \eta_k D g^t) \Big)_+  \mbox{ since } Dv^t = Dw^t - \eta_k Dg^t,\label{firm:equiv}
\end{align} 
\end{subequations}
Note that 
{\footnotesize\begin{align*}
Dg^t = 2D
\begin{bmatrix}A^\top A \\
& \ddots\\
& & A^\top A\end{bmatrix}
w^t - 2D \begin{bmatrix}A^\top b_1\\\vdots \\ A^\top b_N \end{bmatrix} = 2 A^\top \left( A Dw^t - 2 Db \right).
\end{align*}}
Therefore, \eqref{firm:equiv} is equivalent to \eqref{pgd_quadratic_penalty}.
\end{proof}

\subsection{The choice of initial step size $\eta_1$}
The proof of Proposition \ref{prop:qp} demonstrates that \texttt{FIRM} is an inexact first-order QP method, where each penalized problem $(P_{\eta_k})$ is approximately solved after several PGD steps with a specified step size of $\eta_k$. For the PGD step to ensure descent for $(P_{\eta_k})$, the step size $\eta_k$ must satisfy certain conditions. We first present a variant of the descent lemma for $L$-smooth functions, with the proof provided in Appendix \ref{appendix:proof-descent-lemm}.
\begin{lemma} Consider a problem $\min_{w \ge 0} Q(w)$, where $Q$ is an $L$-smooth function. If the step size $\eta \le \frac{3}{2L}$, then each projected gradient step yields descent in $Q$, provided $w$ is not optimal. Specifically, for $w'=(w- \eta\nabla Q(w) )_+$, we have:
$$Q(w') \le Q(w) - \frac{1}{4\eta} \|w' - w\|^2.$$\label{lemm:descent}
\end{lemma}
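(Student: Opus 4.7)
The plan is to combine the standard $L$-smoothness descent inequality with a sharper first-order optimality bound coming from the projection onto the nonnegative orthant. Throughout I treat $w \ge 0$ as given (which is implicit in the algorithmic context of the paper, since $w^t$ is the output of a previous clipping step); this feasibility of $w$ is essential.

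First, I would write the defining property of $w' = (w - \eta \nabla Q(w))_+$ as the variational inequality
\begin{equation*}
\bigl(w' - w + \eta \nabla Q(w)\bigr)^\top (u - w') \ge 0 \quad \text{for all } u \ge 0.
\end{equation*}
Setting $u = w$ (feasible by assumption) and rearranging yields
\begin{equation*}
\nabla Q(w)^\top (w' - w) \le -\frac{1}{\eta}\|w' - w\|^2.
\end{equation*}
This is where the proof gains a factor of two over what one would get from the crude non-expansiveness bound $\|w' - (w - \eta \nabla Q(w))\|^2 \le \|w - (w - \eta \nabla Q(w))\|^2$, which only delivers $-\tfrac{1}{2\eta}\|w' - w\|^2$. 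That factor is precisely what enables the step-size ceiling $\tfrac{3}{2L}$ instead of the more familiar $\tfrac{1}{2L}$.

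Next I would invoke the standard $L$-smoothness descent inequality
\begin{equation*}
Q(w') \le Q(w) + \nabla Q(w)^\top(w' - w) + \frac{L}{2}\|w' - w\|^2,
\end{equation*}
and substitute the preceding bound on the linear term to obtain
\begin{equation*}
Q(w') \le Q(w) - \Bigl(\frac{1}{\eta} - \frac{L}{2}\Bigr)\|w' - w\|^2.
\end{equation*}
The claimed inequality then follows by imposing $\tfrac{1}{\eta} - \tfrac{L}{2} \ge \tfrac{1}{4\eta}$, which is equivalent to $\tfrac{3}{4\eta} \ge \tfrac{L}{2}$, i.e., $\eta \le \tfrac{3}{2L}$. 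Strict descent under non-optimality of $w$ is automatic: the fixed-point condition $w = (w - \eta \nabla Q(w))_+$ is equivalent to the first-order optimality condition for $\min_{u \ge 0} Q(u)$, so if $w$ is not optimal then $w' \neq w$ and $\|w' - w\|^2 > 0$.

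There is no real obstacle; the argument is a one-line calibration once the sharper variational-inequality bound is used. The only point requiring care is the source of that bound: relying on non-expansiveness alone loses a factor of two and would force $\eta \le \tfrac{1}{2L}$, which is insufficient for the claim. Exploiting the full projection optimality condition together with the feasibility of $w$ is the correct route.
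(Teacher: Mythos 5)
Your proof is correct and follows essentially the same route as the paper: the variational inequality you invoke, $\langle w' - w + \eta\nabla Q(w), u - w'\rangle \ge 0$ with $u = w$, is exactly the projection-theorem step $\langle y - y_+, w - y_+\rangle \le 0$ used in the paper's argument, combined with the same $L$-smoothness descent inequality and the same calibration $\eta \le \tfrac{3}{2L}$. The only difference is bookkeeping (you substitute the sharpened bound on the linear term up front, while the paper adds and subtracts $\tfrac{1}{\eta}\|w'-w\|^2$ and then kills the residual inner product), and your explicit note that feasibility $w \ge 0$ is needed for this step is a point the paper leaves implicit.
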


By leveraging Lemma \ref{lemm:descent}, we can show that starting with the initial stepsize $\eta_1 = \beta \frac{2-\sum_{i=1}^{N-1}c_i^2}{4\lambda_{\max}(A^\top A)}$ 
and following the update rule in Line \ref{line:update}, each iteration of \texttt{FIRM} eventually produces a descent for the current penalized problem $(P_{\eta_k})$:
\begin{proposition} 
    For any $\eta_k  \in \left(0,\frac{2-\sum_{i=1}^{N-1}c_i^2}{4\lambda_{\max}(A^\top A)}\right]$, \eqref{firm} guarantees a descent for $(P_{\eta_k})$.
    \label{prop:descent-stepsize}
\end{proposition}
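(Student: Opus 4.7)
The plan is to invoke Lemma \ref{lemm:descent} applied to the penalized objective $Q(\cdot;\eta_k)$. From the proof of Proposition \ref{prop:qp}, one iteration of \eqref{firm} coincides exactly with one projected gradient step on $Q(\cdot;\eta_k)$ at step size $\eta_k$, so establishing descent for $(P_{\eta_k})$ reduces to verifying the condition $\eta_k \le 3/(2L)$ of Lemma \ref{lemm:descent}, where $L$ is any Lipschitz constant of $\nabla Q(\cdot;\eta_k)$. Thus it suffices to derive a sharp upper bound on $L$ and show that the stated range of $\eta_k$ implies the smallness condition.

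To bound $L$, I would compute the Hessian
\begin{equation*}
\nabla^2 Q(w;\eta_k) = 2\,\mathrm{diag}(A^\top A) - D^\top A^\top A D + \frac{1}{2\eta_k}\, D^\top D,
\end{equation*}
and observe that, because $D^\top A^\top A D \succeq 0$ and $D^\top D \succeq 0$, the only nontrivial step is to show $\nabla^2 F := 2\,\mathrm{diag}(A^\top A) - D^\top A^\top A D \succeq 0$, whence $\lambda_{\max}(\nabla^2 F) \le 2\lambda_{\max}(A^\top A)$. Writing $u_i := A w_i$, I would expand
\begin{equation*}
w^\top \nabla^2 F w = 2\sum_{i=1}^N \|u_i\|^2 - \Big\|u_N - \sum_{i=1}^{N-1} c_i u_i\Big\|^2,
\end{equation*}
and bound the subtracted term by $\|u_N - \sum c_i u_i\|^2 \le 2\|u_N\|^2 + 2\|\sum c_i u_i\|^2 \le 2\|u_N\|^2 + 2\sum_{i=1}^{N-1} c_i\|u_i\|^2 \le 2\sum_{i=1}^{N}\|u_i\|^2$, where the steps use, in order, Young's inequality, Cauchy–Schwarz together with $\sum_{i=1}^{N-1} c_i \le 1$, and $c_i \le 1$. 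This yields $w^\top\nabla^2 F w \ge 0$, so $\nabla^2 F \succeq 0$.

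Combining this with $\lambda_{\max}(D^\top D) = 1 + \sum_{i=1}^{N-1} c_i^2$ (which follows from $D D^\top = (1 + \sum c_i^2) I_n$), I obtain
\begin{equation*}
L \;\le\; 2\lambda_{\max}(A^\top A) + \frac{1 + \sum_{i=1}^{N-1} c_i^2}{2\eta_k}.
\end{equation*}
Substituting into $\eta_k \le 3/(2L)$ and rearranging reduces the smallness condition to $4\eta_k\lambda_{\max}(A^\top A) \le 2 - \sum_{i=1}^{N-1} c_i^2$, which is precisely the hypothesis. Lemma \ref{lemm:descent} then delivers the desired descent inequality.

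The main obstacle is the identification of the correct Lipschitz bound: one must recognize that the indefinite contribution $-D^\top A^\top A D$ does not enlarge $L$ because the modified objective $F$ remains convex, and this convexity hinges on the short Young/Cauchy–Schwarz argument together with the assumption $\sum_{i=1}^{N-1} c_i \le 1$ built into the problem statement. A less careful bound that separately adds $\|D^\top A^\top A D\|$ would produce an extra $(1+\sum c_i^2)\lambda_{\max}(A^\top A)$ term and a strictly smaller admissible range for $\eta_k$, breaking the match with $(2-\sum c_i^2)/(4\lambda_{\max}(A^\top A))$. Once this is resolved, the remainder of the proof is routine arithmetic.
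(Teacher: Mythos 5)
Your proposal is correct and takes essentially the same route as the paper: identify one step of \eqref{firm} with a projected gradient step on $Q(\cdot;\eta_k)$, bound its smoothness constant by $L(\eta_k)\le 2\lambda_{\max}(A^\top A)+\frac{1+\sum_{i=1}^{N-1}c_i^2}{2\eta_k}$ using $\lambda_{\max}(D^\top D)=1+\sum_{i=1}^{N-1}c_i^2$ together with convexity of the quadratic part, and verify $\eta_k L(\eta_k)\le \tfrac{3}{2}$ so that Lemma \ref{lemm:descent} applies. The only cosmetic difference is that you certify $2\,\diag(A^\top A)-D^\top A^\top A D\succeq 0$ by a direct Young/Cauchy--Schwarz expansion exploiting $\sum_{i=1}^{N-1}c_i\le 1$ and $c_i\le 1$, whereas the paper obtains the same fact from $\lambda_{\max}(\bar D^\top\bar D)=1+\sum_{i=1}^{N-1}c_i^2\le 2$.
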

\begin{proof}
Note that
{\footnotesize
\begin{align}
& \nabla^2 Q_{\eta_k} := \nabla^2 Q(w; \eta_k) = \underbrace{
\bar A^{\top} (2I_m - {\bar D}^\top \bar  D) \bar A}_{=:B} + \frac{1}{2\eta_k} D^\top D, \label{eq:hessian} 
\end{align}}
where
\(\bar A:= 
\mbox{diag}(A)
,\ \bar D = [-c_1I_m ~ -c_2 I_m ~ \cdots ~ -c_{N-1}I_m ~ I_m].\) 
Note that $B \succeq 0$ when $\lambda_{\min}(2I_m-{\bar D}^\top {\bar D})=2-\lambda_{\max}(\bar{D}^\top \bar{D}) \ge 0$. We establish that $\lambda_{\max}(\bar{D}^\top \bar{D}) = \lambda_{\max}({D}^\top D) = \sum_{i=1}^{N-1}c_i^2 + 1$ from the following:
{\footnotesize$$(\lambda_{\max}(D^\top D))^2 = \lambda_{\max}(D^\top DD^\top D) = (\sum_{i=1}^{N-1}c_i^2 + 1) \lambda_{\max}(D^\top D),$$}
where the first equality follows from the property that $\lambda_{\max}(C^2) = (\lambda_{\max}(C))^2$ for a symmetric $C \succeq 0$ and the second equality from $DD^\top = (\sum_{i=1}^{N-1}c_i^2 + 1) I_n$. Since $\lambda_{\max}(D^\top D) > 0$, it follows that $\lambda_{\max}(D^\top D) = \sum_{i=1}^{N-1}c_i^2 + 1$. The same argument applies to $\lambda_{\max}({\bar D}^\top \bar D)$. Therefore, $2-\lambda_{\max}({\bar D}^\top {\bar D}) = 1-\sum_{i=1}^{N-1}c_i^2 \ge 0$. 
Consequently, $\nabla^2 Q_{\eta_k} \succeq 0$, and thus $Q(w, \eta_k)$ is convex quadratic. 

For a convex quadratic function, it is $L$-smooth for any $L$ that is greater than or equal to the largest eigenvalue of its Hessian. Note that the following holds:
{\footnotesize\begin{align*}\lambda_{\max}(\nabla^2 Q_{\eta_k}) & \le \lambda_{\max}(B) + \lambda_{\max}(\frac{1}{2\eta_k}D^\top D) \le 2\lambda_{\max}(A^\top A) + \frac{1}{2\eta_k}(\sum_{i=1}^{N-1}c_i^2 + 1).
\end{align*}}
Therefore $Q(w;\eta_k)$ is $L(\eta_k)$-smooth for $L(\eta_k) = 2 \lambda_{\max}(A^\top A) + \frac{\sum_{i=1}^{N-1}c_i^2 + 1}{2\eta_k}$.


By Lemma \ref{lemm:descent}, if $\eta_k \in (0, \frac{3}{2L(\eta_k)}]$, then we have a decrease for each step, since
{\footnotesize\begin{align*}
\eta_k L(\eta_k) & \le  2\eta_k \lambda_{\max}(A^\top A) + \frac{1}{2}(\sum_{i=1}^{N-1}c_i^2 + 1) \le \frac{2-\sum_{i=1}^{N-1} c_i^2}{2}  + \frac{1}{2}(\sum_{i=1}^{N-1}c_i^2 + 1)  =  \frac{3}{2}.
\end{align*}}
Hence, the desired result follows.
\end{proof}

Therefore, by starting the algorithm with the specific $\eta_1$ and ensuring that the stepsize only decreases, \texttt{FIRM} is guaranteed to reduce the objective function value of the current penalized problem eventually. In the next section, we establish the connection between the approximate solutions to the penalized problems and the original problem, i.e., \eqref{modified_model}, prove its convergence to the optimal solution of \eqref{modified_model}, and establish its rate of convergence.

\subsection{Convergence results}
We first present the well-known optimality conditions for the penalized problem $(P_{\eta_k})$ and \eqref{modified_model}, respectively: 
\begin{multline} w^* \text{ is an optimal solution to } (P_{\eta_k}) \text{ if and only if }\\ w^* \ge 0 \text{ and }   \begin{cases}\nabla Q(w^*; \eta_k)_j =0, & \forall j: w^*_j > 0,\\ 
\nabla Q(w^*; \eta_k)_j \geq 0, & \forall j: w^*_j = 0. \end{cases} \end{multline}
\begin{multline} u^* \text{ is optimal to \eqref{modified_model} if and only if } \\
\exists \mu^*: u^* \ge 0, \quad \|Du^*\| = 0, \quad \nabla F(u^*) + D^\top \mu^* \in -\mathcal{N}_{\ge 0}(u^*).\label{eq:KKT}\end{multline}

\begin{remark}
    Under the weak Slater's condition, if the primal problem \eqref{basic_model} is feasible and bounded from below, an optimal dual variable $\mu^*$ is guaranteed to exist. This condition holds in our case.
    \label{rema:opt-lagrange-existence}
\end{remark}


This leads to the following definitions of sub-optimality for the penalized problem and for \eqref{modified_model}, respectively: \begin{definition} 
    A point $w'$ is an $\eta_k$-optimal solution to $(P_{\eta_k})$ if it satisfies: \begin{equation} w' \geq 0, \quad \|\nabla Q(w'; \eta_k)_{\mathcal J(w')}\| \leq \eta_k, \quad \nabla Q(w'; \eta_k) \geq -\eta_k \mathbf{1},\label{term:opt} \end{equation} where $\mathcal J(w') := \{j \in [Nn] : w'_j > \eta_k^2\}$.
    \label{def:sub-opt-penalty}
\end{definition}
\begin{definition}
    A point $u'$ is an $(\epsilon_p, \epsilon_s)$-optimal solution to \eqref{modified_model}, if it satisfies:
    $$\exists \mu' : u' \ge 0, \quad \|Du'\| \le \epsilon_p, \quad \nabla F(u') + D^\top \mu' \in -\mathcal N_{\ge 0}(u')+\mathcal B(\epsilon_s).$$
\end{definition}
We now demonstrate that, for each $k \in \mathbb Z_{\ge 0}$, the termination of the inner loop in Algorithm \ref{algo} indicates an identification of an $\eta_k$-optimal solution to $(P_{\eta_k})$:
\begin{lemma}
Let $w_+ = (w - \eta_k\nabla Q(w; \eta_k))_+$. If $\|w - w_+\| \le \eta_k^2$, then $w$ is an $\eta_k$-optimal solution to $(P_{\eta_k})$.
\label{lemm:term-opt}
\end{lemma}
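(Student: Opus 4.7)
The plan is to verify the three conditions in Definition~\ref{def:sub-opt-penalty} by a componentwise analysis of the projection $(\cdot)_+$. Nonnegativity of $w$ is immediate since every iterate $w^t$ in Algorithm~\ref{algo} arises from a previous projection onto $\mathbb{R}^{Nn}_+$ (or a nonnegative initialization), so condition $w \ge 0$ holds. The remaining two conditions require bounding $\nabla Q(w;\eta_k)$ coordinatewise using the assumption $\|w - w_+\| \le \eta_k^2$, which in particular implies $|w_j - (w_+)_j| \le \eta_k^2$ for every coordinate $j \in [Nn]$.

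For the componentwise lower bound $\nabla Q(w;\eta_k) \ge -\eta_k \mathbf{1}$, I would argue coordinatewise. If $(\nabla Q)_j \ge 0$ the inequality is trivial; otherwise $(\nabla Q)_j < 0$ implies $w_j - \eta_k (\nabla Q)_j > w_j \ge 0$, so the projection is \emph{inactive} at coordinate $j$, giving $(w_+)_j - w_j = -\eta_k (\nabla Q)_j$. The single-coordinate bound $|w_j - (w_+)_j| \le \eta_k^2$ then yields $-\eta_k (\nabla Q)_j \le \eta_k^2$, i.e., $(\nabla Q)_j \ge -\eta_k$, as required.

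For the norm bound $\|\nabla Q(w;\eta_k)_{\mathcal J(w)}\| \le \eta_k$, I would first show that the projection is also inactive on every coordinate $j \in \mathcal J(w)$, i.e., every $j$ with $w_j > \eta_k^2$. If the projection were active at such a $j$, then $(w_+)_j = 0$ and $|w_j - (w_+)_j| = w_j > \eta_k^2$, already contradicting $\|w - w_+\| \le \eta_k^2$. Consequently $(w_+)_j - w_j = -\eta_k (\nabla Q)_j$ for every $j \in \mathcal J(w)$; squaring and summing over $\mathcal J(w)$ produces $\eta_k^2 \|\nabla Q(w;\eta_k)_{\mathcal J(w)}\|^2 = \sum_{j \in \mathcal J(w)} (w_j - (w_+)_j)^2 \le \|w - w_+\|^2 \le \eta_k^4$, and taking square roots delivers the claim.

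The main obstacle, modest as it is, lies in the argument for the second condition: one needs to exclude the possibility that the projection clips some coordinate in $\mathcal J(w)$, and this is precisely what the threshold $\eta_k^2$ in the definition of $\mathcal J(w)$ is calibrated to ensure via the single-coordinate consequence of $\|w - w_+\| \le \eta_k^2$. Apart from this calibration, the proof reduces to routine projection arithmetic.
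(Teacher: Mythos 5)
Your proof is correct and follows essentially the same route as the paper's: a coordinatewise analysis of the projection under $\|w - w_+\| \le \eta_k^2$, showing the projection cannot clip any coordinate in $\mathcal J(w)$ (since that would force $|w_j - (w_+)_j| = w_j > \eta_k^2$) and then reading off $\|\nabla Q_{\mathcal J}\| \le \eta_k$ and the componentwise bound $\nabla Q \ge -\eta_k\mathbf{1}$ from the inactive/active cases. The only cosmetic difference is that you case on the sign of $\nabla Q_j$ while the paper cases on whether $w_{+j}$ is zero, and you make the implicit nonnegativity of the iterate explicit; the substance is identical.
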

\begin{proof}
Let $\mathcal J:= \mathcal J(w)$ and $\nabla Q := \nabla Q(w; \eta_k)$. From $\sum_{j \in \mathcal J} (w_j - w_{+j})^2 + \sum_{j \not\in \mathcal J} (w_j - w_{+j})^2 \le \eta_k^4$, we have
\begin{subequations}
\begin{align}
\forall j \not\in \mathcal J, |w_j - w_{+j}| \le \eta_k^2 ,\label{opt-sol:1}\\
\sum_{j \in \mathcal J} (w_j - w_{+j})^2 \le \eta_k^4.\label{opt-sol:2} 
\end{align}    
\end{subequations}
Note that, from \eqref{opt-sol:1}, for each $j \not\in\mathcal J$, if $w_{+j} > 0$, then $|w_j-w_{+j}| = |\eta_k\nabla Q_j| \le \eta_k^2$, and thus $\nabla Q_j \ge -\eta_k$. On the other hand, if $w_{+j} = 0$, it follows that $\eta_k \nabla Q_j \ge w_j \ge 0$, and thus $ \nabla Q_j \ge 0 > -\eta_k$. In addition, from \eqref{opt-sol:2}, $w_{+j}$ must be nonzero for all $j \in \mathcal J$, since otherwise, $(w_j)^2 \le \eta_k^4$, which is a contradiction. Thus, we have $\|\eta_k \nabla Q_{\mathcal J}\|^2 \le \eta_k^4$, which implies $\|\nabla Q_{\mathcal J}\| \le \eta_k$. 
\end{proof}

It is well-known that the gradient mapping with a stepsize of $\frac{1}{L}$ for $L$-smooth convex functions exhibits a monotonicity property. We establish that this property also holds with a different stepsize for convex quadratic functions, the proof of which is provided in Appendix \ref{proof:monotone}:
\begin{lemma}
Let $Q$ be an $L$-smooth convex quadratic function, and $w_+ = (w - \eta\nabla Q(w))_+$ for some $\eta \in (0, \frac{2}{L}]$. Then, $\|w_+ - w_{++}\| \le \| w- w_+\|$, where $w_{++} = (w_+ - \eta \nabla Q(w_+))_+$.\label{lemm:monotone}
\end{lemma}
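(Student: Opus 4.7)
The plan is to recognize the mapping $T(w) := (w - \eta \nabla Q(w))_+$ as the gradient-mapping operator and to show that $T$ is non-expansive under the hypotheses; the claim then follows immediately since $w_+ = T(w)$ and $w_{++} = T(w_+)$, giving $\|w_+ - w_{++}\| = \|T(w) - T(w_+)\| \le \|w - w_+\|$.

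The first step is to invoke the well-known fact that projection onto the nonnegative orthant is $1$-Lipschitz with respect to the Euclidean norm, so
\begin{equation*}
\|T(w) - T(w_+)\| \le \big\|\big(w - \eta \nabla Q(w)\big) - \big(w_+ - \eta \nabla Q(w_+)\big)\big\|.
\end{equation*}
The second step is to exploit the quadratic structure. Writing $Q(u) = \tfrac{1}{2} u^\top H u + b^\top u + c$ with $H \succeq 0$ (by convexity) and $\lambda_{\max}(H) \le L$ (by $L$-smoothness), we have the exact identity $\nabla Q(w) - \nabla Q(w_+) = H(w - w_+)$, so the right-hand side above equals $\|(I - \eta H)(w - w_+)\| \le \|I - \eta H\| \cdot \|w - w_+\|$.

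The third step is the spectral estimate. Since $H \succeq 0$ and $\|H\| \le L$, every eigenvalue $\lambda$ of $H$ lies in $[0, L]$, so the eigenvalues of $I - \eta H$ take the form $1 - \eta \lambda$ and, for $\eta \in (0, 2/L]$, satisfy $-1 \le 1 - \eta L \le 1 - \eta \lambda \le 1$. Hence $\|I - \eta H\| = \max_\lambda |1 - \eta \lambda| \le 1$, which closes the chain of inequalities.

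The key observation behind this result—and the potential obstacle one needs to be careful about—is that the standard descent/non-expansiveness arguments for general $L$-smooth convex functions typically require $\eta \le 1/L$; the relaxed range $\eta \le 2/L$ here is available only because the quadratic structure lets us replace the Lipschitz inequality on gradient differences with the exact linear identity $\nabla Q(w) - \nabla Q(w_+) = H(w-w_+)$, turning a worst-case estimate into an exact spectral computation on $I - \eta H$.
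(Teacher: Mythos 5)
Your proposal is correct and follows essentially the same route as the paper's proof: both use the $1$-Lipschitz property of the projection, the exact identity $\nabla Q(w)-\nabla Q(w_+)=H(w-w_+)$ afforded by the quadratic structure, and the spectral bound $\|I-\eta H\|\le 1$ valid for $\eta\in(0,2/L]$ since the eigenvalues of $H$ lie in $[0,L]$. Your closing remark correctly identifies why the quadratic assumption is what permits the relaxed step-size range, matching the paper's reasoning.
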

Therefore, the following corollary immediately follows from Lemmas \ref{lemm:term-opt} and \ref{lemm:monotone}:
\begin{corollary}
For each $k \in \mathbb Z_{\ge 0}$ in Algorithm \ref{algo}, $u^k$ is an $\eta_k$-optimal solution to $(P_{\eta_k})$.\label{coro:eta-opt}
\end{corollary}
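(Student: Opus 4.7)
The plan is to chain Lemma \ref{lemm:term-opt} and Lemma \ref{lemm:monotone} to certify $\eta_k$-optimality of the terminated iterate. By Proposition \ref{prop:qp}, for a fixed outer iteration $k$ the inner loop produces projected gradient steps $w^{s+1} = (w^s - \eta_k \nabla Q(w^s; \eta_k))_+$ for the convex quadratic objective $Q(\cdot; \eta_k)$. Writing $t$ for the index at which Line \ref{line:inner-termination} triggers, one has $\|w^{t+1} - w^t\| \le \eta_k^2$ and the outer iterate is set to $u^k = w^{t+1}$.

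To invoke Lemma \ref{lemm:term-opt} at the point $w = w^{t+1}$, what is needed is the bound $\|w^{t+1} - w^{t+2}\| \le \eta_k^2$, where $w^{t+2} := (w^{t+1} - \eta_k \nabla Q(w^{t+1}; \eta_k))_+$ denotes the (hypothetical) next projected-gradient iterate. This is supplied by Lemma \ref{lemm:monotone}, applied with $w = w^t$, $w_+ = w^{t+1}$, and $w_{++} = w^{t+2}$: monotonicity of the gradient-mapping distance yields $\|w^{t+1} - w^{t+2}\| \le \|w^t - w^{t+1}\| \le \eta_k^2$.

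The only hypothesis of Lemma \ref{lemm:monotone} that needs to be checked is the stepsize condition $\eta_k \le 2/L(\eta_k)$, where $L(\eta_k)$ is the smoothness constant of $Q(\cdot; \eta_k)$. This is inherited directly from the chain of inequalities inside the proof of Proposition \ref{prop:descent-stepsize}, which establishes $\eta_k L(\eta_k) \le 3/2 < 2$ for $\eta_k$ in the admissible range. Since the outer-loop rule $\eta_{k+1} = r\eta_k$ with $r \in (0,1)$ only shrinks the stepsize from iteration to iteration, the bound persists across all $k$.

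With these ingredients in hand, applying Lemma \ref{lemm:term-opt} to the pair $(w^{t+1}, w^{t+2})$ certifies that $w^{t+1}$ satisfies the conditions of Definition \ref{def:sub-opt-penalty}, so that $u^k = w^{t+1}$ is an $\eta_k$-optimal solution to $(P_{\eta_k})$. The only subtlety is the role-alignment between the two lemmas: the inner-loop test provides a bound on the \emph{previous} step, whereas $\eta_k$-optimality of $w^{t+1}$ requires a bound on its \emph{next} step, and Lemma \ref{lemm:monotone} is precisely the bridge between the two.
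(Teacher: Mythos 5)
Your proposal is correct and matches the paper's intended argument: the paper states that the corollary follows immediately from Lemmas \ref{lemm:term-opt} and \ref{lemm:monotone}, and your chaining—using Lemma \ref{lemm:monotone} to transfer the termination bound $\|w^t-w^{t+1}\|\le\eta_k^2$ to the hypothetical next step at $w^{t+1}$, then invoking Lemma \ref{lemm:term-opt} at $u^k=w^{t+1}$—is exactly that reasoning, spelled out with the stepsize condition checked via Proposition \ref{prop:descent-stepsize}.
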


Now, we establish the connection between an $\eta_k$-optimal solution to $(P_{\eta_k})$ and an optimal solution of \eqref{modified_model}: 
\begin{proposition}
 Let $u^{*}$ be any optimal solution to \eqref{modified_model}. For each $k \in \mathbb Z_{\ge 0}$, $u^k$ satisfies the following:
 \begin{equation}Q(u^k; \eta_k) - F(u^*) \le \eta_kC_k,
 \label{eq:opt-bound:1}
 \end{equation}
 where $C_k:=\frac{5}{2}\sqrt{Nn}\|u^k-u^*\| + \sqrt{Nn}\|\nabla F(u^*)\|\eta_k+ Nn\eta_k^2$.
\label{prop:opt-gap}
\end{proposition}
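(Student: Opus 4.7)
The plan is to begin from convexity of $Q(\cdot;\eta_k)$, which is a convex quadratic by Proposition~\ref{prop:descent-stepsize}; combined with the observation that $Du^*=0$ forces $Q(u^*;\eta_k)=F(u^*)$, the first-order inequality produces
\[
Q(u^k;\eta_k) - F(u^*) \;\le\; \nabla Q(u^k;\eta_k)^\top (u^k - u^*).
\]
The rest of the argument controls the right-hand side using the $\eta_k$-optimality of $u^k$ supplied by Corollary~\ref{coro:eta-opt}. I would split the inner product via the index set $\mathcal J := \mathcal J(u^k)$ of Definition~\ref{def:sub-opt-penalty} and its complement $S := [Nn]\setminus \mathcal J$, writing
\[
\nabla Q^\top(u^k-u^*) \;=\; \nabla Q_{\mathcal J}^\top (u^k-u^*)_{\mathcal J} \;+\; \nabla Q_S^\top u^k_S \;-\; \nabla Q_S^\top u^*_S.
\]

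The $\mathcal J$ piece is immediate from Cauchy--Schwarz together with $\|\nabla Q_{\mathcal J}\|\le\eta_k$, producing the term $\eta_k\|u^k-u^*\|$. For $-\nabla Q_S^\top u^*_S$, I would use the one-sided bound $\nabla Q_j\ge-\eta_k$ and $u^*_j\ge 0$ to bound the sum by $\eta_k\sum_{j\in S}u^*_j \le \sqrt{Nn}\,\eta_k\|u^*_S\|$, and then expand $\|u^*_S\|\le \|u^k-u^*\|+\|u^k_S\|\le \|u^k-u^*\|+\sqrt{Nn}\,\eta_k^2$ via the coordinatewise bound $u^k_j\le\eta_k^2$ on $S$. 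This extracts precisely the $\sqrt{Nn}\,\eta_k\|u^k-u^*\|$ and $Nn\,\eta_k^3$ contributions appearing in $\eta_k C_k$.

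The main obstacle is bounding $\nabla Q_S^\top u^k_S$, because on $S$ the $\eta_k$-optimality only gives the one-sided bound $\nabla Q_j\ge-\eta_k$: components of $\nabla Q_j$ can be arbitrarily large wherever the projected-gradient step saturates the nonnegativity constraint. To circumvent this I plan to use the decomposition $\nabla Q(u^k;\eta_k)=\nabla F(u^k) + \tfrac{1}{2\eta_k}D^\top Du^k$ together with the structural estimates $\|\nabla F(u^k)\|\le \|\nabla F(u^*)\| + 2\lambda_{\max}(A^\top A)\|u^k-u^*\|$ (from Lipschitzness of $\nabla F$, exploiting that $F$ is a convex quadratic with $\nabla^2 F \preceq 2\lambda_{\max}(A^\top A)I$ as in Proposition~\ref{prop:descent-stepsize}) and $\|D^\top Du^k\|=\|D^\top D(u^k-u^*)\|\le\|D\|^2\|u^k-u^*\|$, where $\|D\|^2 = 1+\sum_{i=1}^{N-1}c_i^2 \le 2$. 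Bounding $\nabla Q_S^\top u^k_S$ by $\|u^k_S\|_\infty\cdot\|\nabla Q_S\|_1 \le \eta_k^2\sqrt{Nn}\,\|\nabla Q_S\|$ and invoking $u^k_j\le\eta_k^2$ on $S$ then yields pieces proportional to $\sqrt{Nn}\,\|\nabla F(u^*)\|\eta_k^2$ and $\eta_k\|u^k-u^*\|$ (with a residual $\lambda_{\max}(A^\top A)\,\eta_k^2\|u^k-u^*\|$). The initial step-size choice $\eta_1 = \beta\,\tfrac{2-\sum_{i=1}^{N-1}c_i^2}{4\lambda_{\max}(A^\top A)}$ enforces $\eta_k\cdot 2\lambda_{\max}(A^\top A)\le \beta$, so that residual collapses into a constant multiple of $\eta_k\|u^k-u^*\|$. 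Summing all three pieces and absorbing the universal constants into the coefficient $\tfrac{5}{2}\sqrt{Nn}$ of the dominant $\eta_k\|u^k-u^*\|$ term in $C_k$ then delivers $Q(u^k;\eta_k)-F(u^*)\le \eta_k C_k$.
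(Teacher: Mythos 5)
Your skeleton is exactly the paper's: convexity of $Q(\cdot;\eta_k)$ plus $Du^*=0$ gives the first-order bound, the inner product is split over $\mathcal J(u^k)$ and its complement, the $\mathcal J$-block is handled by Cauchy--Schwarz with $\|\nabla Q_{\mathcal J}\|\le\eta_k$, the complement is handled by the one-sided bound $\nabla Q_j\ge-\eta_k$ together with $0\le u^k_j\le\eta_k^2$, and the residual full-gradient norm is controlled by comparing $\nabla Q(u^k;\eta_k)$ to $\nabla Q(u^*;\eta_k)=\nabla F(u^*)$. Your decomposition $\nabla Q=\nabla F+\tfrac{1}{2\eta_k}D^\top D(\cdot)$ with separate Lipschitz bounds is just an unbundled version of the paper's single appeal to $L(\eta_k)$-smoothness of $Q$.

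The one genuine issue is quantitative: as outlined, your bookkeeping does not close to the stated coefficient $\tfrac{5}{2}$. Two places leak. First, you bound the $\mathcal J$-piece by $\eta_k\|u^k-u^*\|$ and, separately, $\|u^*_S\|\le\|u^k-u^*\|+\sqrt{Nn}\,\eta_k^2$, so the two pieces together contribute up to $\bigl(1+\tfrac{1}{\sqrt{Nn}}\bigr)\sqrt{Nn}\,\eta_k\|u^k-u^*\|$; the paper instead passes to $\ell_1$-norms in \eqref{opt-gap:4} so that the $\mathcal J$- and $S$-parts recombine into a single $\|u^k-u^*\|_1\le\sqrt{Nn}\|u^k-u^*\|$, i.e.\ coefficient exactly $1$. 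Second, your gradient bound gives $\eta_k\|\nabla Q(u^k;\eta_k)\|\le\eta_k\|\nabla F(u^*)\|+\bigl(2\lambda_{\max}(A^\top A)\eta_k+1\bigr)\|u^k-u^*\|$ because you use $\|D^\top D\|\le 2$; with the exact value $\|D^\top D\|=1+\sum_{i=1}^{N-1}c_i^2$ and $\eta_k\le\tfrac{2-\sum_i c_i^2}{4\lambda_{\max}(A^\top A)}$ this constant is $\eta_kL(\eta_k)\le\tfrac32$, whereas yours is $\le 2$ (and $\le 1+\beta$ for general $\beta$). Summing, your route yields a coefficient of roughly $3+\tfrac{1}{\sqrt{Nn}}$ on the $\sqrt{Nn}\,\eta_k\|u^k-u^*\|$ term rather than $\tfrac52$. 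This would be harmless for the downstream asymptotics, but it does not prove the proposition with the constant as stated; tightening the two steps above (merge the $\ell_1$ pieces; keep $1+\sum_i c_i^2$ rather than $2$) recovers $1+\tfrac32=\tfrac52$ exactly.
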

\begin{proof}
Let $F^*$ denote the optimal objective value of \eqref{modified_model}, i.e., $F^* = F(u^*)$, and let $\mathcal J_k$ denote $\mathcal J(u^k)$ and $\nabla Q^k := \nabla Q(u^k; \eta_k)$.  Note that from $Q(u^*;\eta_k) = F(u^*) +\frac{1}{4\eta_k}\|Du^*\|^2= F(u^*)$, we have the following:
{\footnotesize\begin{subequations}
\begin{align}
 Q(u^k; \eta_k) - F(u^*) = \ & Q(u^k; \eta_k) - Q(u^*; \eta_k)\\
\le \ & (\nabla Q^k)^\top (u^k - u^{*})\label{eq:opt-gap:0}\\
= \ & \sum_{j \in \mathcal J_k}\nabla Q^k_j (u^k_{j} - u_j^{*}) + \sum_{j \not\in \mathcal J_k}\nabla Q^k_j (u^k_{j} - u_j^{*})\\
\le \ & \eta_k \|(u^k - u^{*})_{\mathcal J_k}\| + \sum_{j \not\in\mathcal J_k}\max \big \{\nabla Q^k_j \eta_k^2, \ \eta_k u_j^{*} \big\} \label{eq:opt-gap:1}\\
\le \ & \eta_k \Big(\|(u^k - u^{*})_{\mathcal J_k}\| + \sum_{j \not\in\mathcal J_k}|\nabla Q^k_{j} \eta_k| +  u_j^{*}\Big)\label{opt-gap:2}\\
\le \ &  \eta_k \Big(\|(u^k - u^{*})_{\mathcal J_k}\|_1 + \eta_k\|\nabla Q^k\|_1 +  \|(u^{*} - u^k)_{\setminus\mathcal J_k}\|_1 + \|u^{k}_{\setminus\mathcal J_k}\|_1 \Big) \label{opt-gap:4}\\
\le \ &  \eta_k \Big(\|u^k - u^{*}\|_1 + \eta_k\|\nabla Q^k\|_1 + Nn\eta_k^2 \Big) \label{opt-gap:5}\\
\le \ &  \eta_k \Big(\frac{5}{2}\sqrt{Nn}\|u^k-u^*\| + \sqrt{Nn}\|\nabla F(u^*)\|\eta_k + Nn\eta_k^2\Big), \label{opt-gap:6}
\end{align}
\end{subequations}}
where \eqref{eq:opt-gap:0} follows from the convexity of $Q(\cdot; \eta_k)$ and \eqref{eq:opt-gap:1} is obtained from the Cauchy-Schwarz inequality, the fact that $u^k$ is an $\eta_k$-optimal solution to ($P_{\eta_k}$) from Corollary \ref{coro:eta-opt}, and by considering two cases for $j \not\in \mathcal J_k$: $\nabla Q^k_j < 0$ and $\nabla Q^k_j \ge 0$. We obtain \eqref{opt-gap:2} by factoring out $\eta_k$ and using $\max\{a, b\} \le |a| + |b|$. In \eqref{opt-gap:4}, the index set $\setminus \mathcal J_k$ denotes $[nN] \setminus \mathcal J_k$; the inequality follows from the well-known property $\|\cdot\| \le \|\cdot\|_1$ and the triangle inequality $\|a\|_1 = \|a-b+b\|_1 \le \|a-b\|_1 + \|b\|_1$. The inequality in \eqref{opt-gap:5} is obtained using the fact that $\|a\|_1 + \|b\|_1 = \|[a;b]\|_1$ along with the condition $0 \le u_j^k \le \eta_k^2$ for all $j \notin \mathcal J_k$. Finally, \eqref{opt-gap:6} holds since 
$\eta_k\|\nabla Q^k\| - \eta_k\|\nabla F(u^*)\|=\eta_k\|\nabla Q^k\| - \eta_k\|\nabla Q(u^*;\eta_k)\| \le \eta_k\|\nabla Q^k - \nabla Q(u^*;\eta_k)\| \le \eta_k L(\eta_k) \|u^k-u^*\| \le \frac{3}{2}\|u^k - u^*\|$ from the $L(\eta_k)$-smoothness of $Q(\cdot; \eta_k)$, and $\eta_k \|\nabla Q^k\|_1 \le \sqrt{Nn}\eta_k \|\nabla Q^k\|$ from the relationship between $l_1$- and $l_2$-norms. 
\end{proof}

From Proposition \ref{prop:opt-gap} and Propositions 3 and 10 in \cite{lan2013iteration}, we have the following:
\begin{proposition}Let $(u^{*}, \mu^*)$ be any optimal primal-dual solution pair to \eqref{modified_model}, i.e., it satisfies \eqref{eq:KKT}. For each $k \in \mathbb Z_{\ge 0}$, $u^k$ is a $\big(2\eta_k (2 \|\mu^*\| + \sqrt{C_k}), \frac{5}{2}\eta_k \big)$-optimal solution to \eqref{modified_model}, where $C_k$ is defined in Proposition \ref{prop:opt-gap}.
\label{prop:opt-gap-orig}
\end{proposition}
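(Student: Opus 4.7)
The plan is to certify the two conditions in the definition of $(\epsilon_p,\epsilon_s)$-optimality separately, after choosing the dual witness $\mu' := \frac{1}{2\eta_k} Du^k$. With this choice $\nabla F(u^k)+D^\top\mu'=\nabla Q(u^k;\eta_k)$, so the stationarity condition becomes a statement about $\nabla Q(u^k;\eta_k)$, which is the natural object to analyze. The primal bound will come from Proposition \ref{prop:opt-gap} together with a linear lower bound on $F(u^k)-F(u^*)$ extracted from the KKT conditions at $u^*$; the dual bound will come from the projection step that produced $u^k$, plus a smoothness transfer from the previous iterate.

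For the primal bound, note first that $F$ is convex: as computed in the proof of Proposition \ref{prop:descent-stepsize}, $\nabla^2 F=\bar A^\top(2I-\bar D^\top\bar D)\bar A\succeq 0$. Convexity of $F$ gives $F(u^k)\ge F(u^*)+\nabla F(u^*)^\top(u^k-u^*)$. From the KKT system \eqref{eq:KKT}, $\nabla F(u^*)+D^\top\mu^*=-s$ with $s\in\mathcal N_{\ge 0}(u^*)$, which on the nonnegative orthant means $s\le 0$ and $s^\top u^*=0$; together with $Du^*=0$, $u^k\ge 0$, and $s\le 0$, this yields $F(u^k)-F(u^*)\ge -(\mu^*)^\top Du^k\ge -\|\mu^*\|\,\|Du^k\|$. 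Substituting into the inequality $F(u^k)-F(u^*)+\frac{1}{4\eta_k}\|Du^k\|^2\le \eta_k C_k$ from Proposition \ref{prop:opt-gap} and letting $r:=\|Du^k\|$ produces $r^2-4\eta_k\|\mu^*\|r-4\eta_k^2 C_k\le 0$. Solving the quadratic and applying $\sqrt{a+b}\le\sqrt a+\sqrt b$ gives $\|Du^k\|\le 2\eta_k(2\|\mu^*\|+\sqrt{C_k})$, which is exactly $\epsilon_p$.

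For the dual bound, let $w^t$ be the inner-loop iterate that triggered termination, so $u^k=(w^t-\eta_k\nabla Q(w^t;\eta_k))_+$ and $\|w^t-u^k\|\le \eta_k^2$. The projection optimality condition for the nonnegative orthant gives $w^t-\eta_k\nabla Q(w^t;\eta_k)-u^k\in\mathcal N_{\ge 0}(u^k)$, which rearranges (using that $\mathcal N_{\ge 0}(u^k)$ is a cone) to
\begin{equation*}
\nabla Q(w^t;\eta_k)\in -\mathcal N_{\ge 0}(u^k)+\tfrac{1}{\eta_k}(w^t-u^k),
\end{equation*}
and the perturbation term has norm at most $\eta_k$. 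To move from $w^t$ to $u^k$, I invoke the $L(\eta_k)$-smoothness of $Q(\cdot;\eta_k)$ established in Proposition \ref{prop:descent-stepsize} together with $\eta_k L(\eta_k)\le 3/2$ to get $\|\nabla Q(u^k;\eta_k)-\nabla Q(w^t;\eta_k)\|\le L(\eta_k)\|u^k-w^t\|\le \tfrac{3}{2}\eta_k$. Summing the two perturbations yields $\nabla Q(u^k;\eta_k)\in -\mathcal N_{\ge 0}(u^k)+\mathcal B(\tfrac{5}{2}\eta_k)$, which with the choice of $\mu'$ above is exactly the $\epsilon_s=\tfrac{5}{2}\eta_k$ stationarity condition.

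The main obstacle is the dual bound. A naive coordinate-wise decomposition of $\nabla Q(u^k;\eta_k)$ into a normal-cone element plus a remainder, using only Definition \ref{def:sub-opt-penalty}, breaks down on indices $j$ with $0<u^k_j\le\eta_k^2$: there, membership in $\mathcal N_{\ge 0}(u^k)$ forces the cone component to vanish while the $\eta_k$-optimality definition furnishes no two-sided bound on $\nabla Q_j$. The key idea is to bypass this by reading off the decomposition directly from the projection that produced $u^k$, and then transferring to $u^k$ via smoothness, so the entire bound is driven by the single scalar $\|w^t-u^k\|\le\eta_k^2$ rather than by coordinatewise information.
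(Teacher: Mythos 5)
Your proof is correct and follows essentially the same route as the paper's: it picks the dual witness $\mu' = \frac{1}{2\eta_k}Du^k$, derives the primal bound by combining Proposition \ref{prop:opt-gap} with the Lagrangian lower bound $F(u^k)-F(u^*)\ge -\|\mu^*\|\|Du^k\|$ and solving the resulting quadratic inequality, and gets the $\frac{5}{2}\eta_k$ stationarity bound from the projection optimality condition at the terminating inner iterate plus $L(\eta_k)$-smoothness with $\eta_k L(\eta_k)\le \frac{3}{2}$. The only difference is cosmetic: you re-derive inline the two facts the paper imports from \cite{lan2013iteration} (its Corollary 2 and Proposition 3(b)), making the argument self-contained.
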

\begin{proof}
From Proposition \ref{prop:opt-gap} and following the same procedure in the proof of Proposition 10 in \cite{lan2013iteration}, we have
{\footnotesize\begin{equation*}\eta_kC_k \ge Q(u^k; \eta_k) - F(u^*) = F(u^k) + \frac{1}{4\eta_k}\|Du^k\|^2 - F(u^*) \ge -\|\mu^*\|\|Du^k\|+ \frac{1}{4\eta_k}\|Du^k\|^2,\end{equation*}}
where the last inequality is from Corollary 2 in \cite{lan2013iteration}. Then, by applying the quadratic formula to the one-dimensional equation in $\|Du^k\|$ and using the fact that $\|Du^k\| \ge 0$ and $\sqrt{a+b} \le \sqrt{a} + \sqrt{b}$ for nonnegative $a$ and $b$, we get
\begin{equation}
\|Du^k\| \le 2\eta_k \Big(2 \|\mu^*\| + \sqrt{C_k}\Big).
\label{eq:2}
\end{equation}
Now, let $\mu^k := \frac{1}{2\eta_k}D u^k$ and define $u^{k-}$ as the inner loop iterate directly preceding  $u^k$, i.e., $u^k=(u^{k-}- \eta_k \nabla Q(u^{k-}; \eta_k))_+$. Then, by applying Proposition 3(b) from \cite{lan2013iteration}:
{\footnotesize\begin{align}
\| \nabla \phi (\tilde{x})]^{\tau}_X \| \leq \epsilon \ \ \textit{ implies that } \ \ \nabla \phi (\tilde{x}^+) \in - \mathcal{N}_X(\tilde{x}^+) + \mathcal{B} \big( (1+\tau L_{\phi}) \epsilon \big) \nonumber
\end{align}}
with $\tau \gets \eta_k$, $L_\phi \gets L(\eta_k)$, $\epsilon \gets \eta_k$, $X \gets \mathbb R^{Nn}_+$, $\tilde x \gets u^{k-}$, $\|\nabla \phi(\tilde x)]^\tau_X \| \gets \frac{1}{\eta_k}\|u^k - u^{k-}\|$, $\nabla \phi(\tilde x^+) \gets\nabla Q(u^k;\eta_k) = \nabla F(u^k) + D^\top \mu^k$, we have 
\begin{equation}\nabla F(u^k) +  D^\top \mu^k \in -\mathcal{N}_{\ge 0}(u^k) + \mathcal B(\frac{5}{2}\eta_k).\label{eq:KKT:approx}\end{equation}
\end{proof}

Proposition~\ref{prop:opt-gap-orig} is essential to establish that every limit point of the iterates generated by \texttt{FIRM}, that is Algorithm~\ref{algo}, is an optimal solution of \eqref{basic_model}. In the following theorem, we formalize this result and demonstrate that, when the iterates remain bounded, \texttt{FIRM} converges to an $\epsilon$-optimal solution of ($P_\epsilon$) within $O(1/\epsilon^2)$ number of iterations. Let $\{u^{k}\}$ denote the sequence of iterates generated by the outer loop of Algorithm \ref{algo}. We further show that, under certain conditions, \texttt{FIRM} achieves $|Q(u^k;\epsilon) - F(u^*)| \le \epsilon$ in $O(1/\epsilon)$ iterations. This aligns with the iteration complexity of \texttt{FedPGD}, which achieves $F(u^k) - F(u^*) \le \epsilon$ after $O(1/\epsilon)$ iterations. We will discuss this further following the next theorem:
\begin{theorem}
Every limit point of $\{u^{k}\}$ is optimal to \eqref{modified_model}. Moreover, if $\{u^{k}\}$ is bounded, i.e., $\{u^k\} \subseteq \mathcal B(M)$ for some $M > 0$, then for any $\epsilon >0$, the sequence converges to an $\epsilon$-optimal solution of ($P_\epsilon$) after $O(\frac{1}{\epsilon^2})$ executions of \eqref{firm}.
\label{theo:convergence}
\end{theorem}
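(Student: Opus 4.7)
The plan is to prove the two claims separately, relying principally on Propositions~\ref{prop:opt-gap} and \ref{prop:opt-gap-orig} for the limit-point argument and on Lemmas~\ref{lemm:descent} and \ref{lemm:monotone} for the inner-loop iteration count. For the first claim, fix a limit point $\bar u$ with $u^{k_j} \to \bar u$ along a subsequence. Primal nonnegativity $\bar u \ge 0$ is immediate. From Proposition~\ref{prop:opt-gap-orig}, $\|Du^{k_j}\| \le 2\eta_{k_j}(2\|\mu^*\| + \sqrt{C_{k_j}})$; along the convergent subsequence, $\|u^{k_j}-u^*\|$ is bounded and hence so is $C_{k_j}$, so $\eta_{k_j}\to 0$ forces $D\bar u = 0$. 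For objective optimality, Proposition~\ref{prop:opt-gap} gives $F(u^{k_j}) \le Q(u^{k_j};\eta_{k_j}) \le F(u^*) + \eta_{k_j}C_{k_j} \to F(u^*)$; continuity of $F$ then yields $F(\bar u) \le F(u^*)$, and since $\bar u$ is feasible, $F(\bar u) = F(u^*)$.

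For the rate, the plan is to bound the total number of inner-loop executions of \eqref{firm} before the outer penalty $\eta_k$ first drops below $\epsilon$. Fix the outer index $k$ and let $\Delta_k := Q(u^{k-1};\eta_k) - Q^*_k$, where $Q^*_k := \min_{w\ge 0} Q(w;\eta_k)$. Lemma~\ref{lemm:descent} applied to $Q(\cdot;\eta_k)$ with step $\eta_k$ gives $Q(w^{t+1};\eta_k) \le Q(w^t;\eta_k) - \frac{1}{4\eta_k}\|w^{t+1}-w^t\|^2$, and telescoping yields $\sum_t \|w^{t+1}-w^t\|^2 \le 4\eta_k\Delta_k$. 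Lemma~\ref{lemm:monotone} ensures $\{\|w^{t+1}-w^t\|\}_t$ is non-increasing, so the termination test $\|w^{t+1}-w^t\| \le \eta_k^2$ fires after at most $T_k = O(\Delta_k/\eta_k^3)$ inner iterations. To upgrade this to an $O(1/\epsilon^2)$ total, I will show $\Delta_k = O(\eta_k)$ via two ingredients: (i) a penalty lower bound $F^* - Q^*_k \le \eta_k\|\mu^*\|^2$, obtained by completing the square $\frac{1}{4\eta_k}\|Dw\|^2 \ge \mu^\top Dw - \eta_k\|\mu\|^2$ and invoking strong duality (valid under the Slater-type condition of Remark~\ref{rema:opt-lagrange-existence}); and (ii) the upper bound $Q(u^{k-1};\eta_k) - F^* = O(\eta_k)$, obtained from Proposition~\ref{prop:opt-gap} at index $k-1$, the identity $Q(u^{k-1};\eta_k) = Q(u^{k-1};\eta_{k-1}) + \bigl(\tfrac{1}{4\eta_k}-\tfrac{1}{4\eta_{k-1}}\bigr)\|Du^{k-1}\|^2$, and $\|Du^{k-1}\|^2 = O(\eta_{k-1}^2)$ from Proposition~\ref{prop:opt-gap-orig}, recalling $\eta_{k-1}=\eta_k/r$. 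Combining, $T_k = O(1/\eta_k^2)$. Since $\eta_k = r^{k-1}\eta_1$ decays geometrically, the sum $\sum_k T_k$ is dominated by its final term, giving total iteration count $O(1/\eta_K^2) = O(1/\epsilon^2)$ to reach $\eta_K = \Theta(\epsilon)$; Corollary~\ref{coro:eta-opt} then certifies that $u^K$ is $\eta_K$-optimal, and hence $\epsilon$-optimal, for $(P_\epsilon)$.

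The main obstacle is establishing the two-sided control on $\Delta_k = O(\eta_k)$, and in particular the penalty lower bound $F^* - Q^*_k \le \eta_k\|\mu^*\|^2$, which requires the strong duality afforded by Remark~\ref{rema:opt-lagrange-existence} together with a careful justification that the infimum defining $Q^*_k$ is attained (which follows from convexity and boundedness-below of $Q(\cdot;\eta_k)$). The boundedness hypothesis $\{u^k\}\subseteq\mathcal{B}(M)$ does essential work in the argument by uniformly bounding $\|u^k - u^*\|$, and hence $C_k$, so that all hidden constants in $T_k = O(1/\eta_k^2)$ can be absorbed into a single $M$-dependent constant independent of $k$.
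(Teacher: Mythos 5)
Your proof is correct and follows the paper's overall architecture — Propositions~\ref{prop:opt-gap} and \ref{prop:opt-gap-orig} for the limit points, the descent and monotonicity lemmas plus a per-outer-loop bound $\Delta_k = O(\eta_k)$ for the inner iteration count, and a geometric sum over $k$ — but it deviates in two places worth noting. First, for limit-point optimality you argue purely on the primal side: feasibility ($D\bar u=0$ from \eqref{eq:2}) together with $F(u^{k_j}) \le Q(u^{k_j};\eta_{k_j}) \le F(u^*)+\eta_{k_j}C_{k_j}$ and continuity of $F$, which by convexity suffices. The paper instead verifies the KKT system \eqref{eq:KKT} via the approximate stationarity certificate \eqref{eq:KKT:approx} (Proposition 3(b) of Lan--Monteiro) and upper semicontinuity of normal cones; your route is more elementary and avoids that machinery, at the price of not producing a limiting multiplier/stationarity certificate. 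Second, in the rate analysis you lower-bound the inner-loop objective by $Q_k^* \ge F^* - \eta_k\|\mu^*\|^2$, obtained by completing the square against the optimal multiplier and strong duality, whereas the paper bounds $Q(w^t;\eta_k)\ge Q(u^k;\eta_k)$ by continued descent and then uses \eqref{eq:opt-bound:2} together with $\|Du^k\|\le 2\eta_k(2\|\mu^*\|+\sqrt{C_k})$; both rest on the same duality fact (existence of $\mu^*$, Remark~\ref{rema:opt-lagrange-existence}) and yield the same $O(\Delta_k/\eta_k^3)=O(1/\eta_k^2)$ inner bound, but your version is slightly cleaner since it needs neither the intermediate-iterate step \eqref{eq:inter} nor the bound on $\|Du^k\|$ at the current index $k$. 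The upper bound $Q(u^{k-1};\eta_k)-F^*=O(\eta_k)$ is handled exactly as in the paper (the identity behind \eqref{eq:consecutive}, Proposition~\ref{prop:opt-gap} at $k-1$, and $\|Du^{k-1}\|=O(\eta_{k-1})$), and the boundedness hypothesis plays the same role of making $C_k$ uniformly bounded; so the two arguments are quantitatively equivalent, with yours trading the explicit constant $\overline M$ for a marginally more streamlined derivation.
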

\begin{proof}

We show that any limit point of $\{u^k\}$ satisfies \eqref{eq:KKT}. Consider a limit point $\bar u$ of $\{u^{k}\}$, meaning that there exists a convergent subsequence $\{u^{k}\}_{\mathcal K} \subseteq \{u^{k}\}$ with $\lim_{k \in \mathcal K}u^{k}=\bar u$. From \eqref{eq:2}, we have:
\begin{subequations}
    \begin{align}
 \limsup_{k\in \mathcal K }\|D u^k\| \le 2\Big(2\|\mu^*\| + \sqrt{C^*}\Big)\lim_{k \in \mathcal K }\eta_k = 0,  \label{eq:limsup}
\end{align}
\end{subequations}
where $C^* := \limsup_{k \in \mathcal K}C_k = \frac{5}{2}\sqrt{Nn}\|\bar u-u^*\|$, and $\limsup \eta_k$ is replaced with $\lim \eta_k$ since $\eta_k$ is monotonic decreasing. Therefore, $\|D\bar u\| = 0$, meaning that the penalized constraint is met for any limit point. In addition, from \eqref{eq:KKT:approx} and the upper semicontinuity of normal cones of a convex set, we get
 $\nabla F(\bar u) +  D^\top \bar \mu \in -\mathcal{N}_{\ge 0}(\bar u),$
where $\bar \mu = D \bar u$. 

Now we show the rate of convergence. Suppose $\{u^k\}$ remain bounded and $u^*$ be one of its limit point, i.e., optimal to \eqref{modified_model}, and $F^*$ be its objective value. Let $\mathcal B(M)$ be a closed ball with a sufficiently large radius of $M$ so that it contains $\{u^k\}$ and thus all of its limit points. We define $C := \frac{5}{2}\sqrt{Nn}M+ \sqrt{Nn}\|\nabla F(u^*)\|\eta_1+ Nn \eta_1^2$ so that $C_k \le  C$ for any $k \in \mathcal K$. In addition, for any optimal Lagrange multiplier $\mu^*$ of \eqref{modified_model}, Problem \eqref{modified_model}, i.e., $\min_{w \ge 0, Dw = 0} F(w)$, is equivalent to $\min_{w \ge 0} F(w) + (\mu^*)^{\top} D w + \frac{1}{4\eta_k} \|Dw\|^2$. Therefore, we have
\begin{equation}
    F^* - Q(u^k; \eta_k) \le (\mu^*)^{\top} Du^k \le \|\mu^*\|\|Du^k\|\label{eq:opt-bound:2}
\end{equation} from the suboptimality of $u^k$ to \eqref{modified_model}, that is, $F^* \leq F(u^k) + (\mu^*)^{\top} D u^k + \frac{1}{4\eta_k} \|Du^k\|^2$.

Now, suppose the inner loop during the $k$-th outer loop has not terminated after $t$ number of inner iterations, i.e., $\|w^t - w^{t-1}\| > \eta_k^2$. Note that we have
{\footnotesize\begin{subequations}
    \begin{align}
& \|w^t - w^{t-1}\|^2  \nonumber \\
\le& \frac{4\eta_k(Q(u^{k-1}; \eta_k) - Q(w^t;\eta_k))}{t}\label{eq:telescope}\\
\le& \frac{4\eta_k(Q(u^{k-1}; \eta_k) - Q(u^k;\eta_k))}{t}\label{eq:inter}\\
 = & \frac{4\eta_k(Q(u^{k-1}; \eta_{k-1}) + \frac{1-r}{4r}\frac{1}{\eta_{k-1}}\|Du^{k-1}\|^2 - Q(u^{k};\eta_k))}{t} \label{eq:consecutive}\\
\le &\frac{4\eta_k(\frac{1}{r}\eta_k C_{k-1} + \frac{1-r}{4r}\frac{1}{\eta_{k-1}}\|Du^{k-1}\|^2 + \|\mu^*\|\|Du^k\|)}{t}\label{eq:opt-bound}\\
\le & \frac{4\eta_k\left(\frac{1}{r}\eta_k C_{k-1}  + \frac{1-r}{r^2}\eta_{k}(2\|\mu^*\|+\sqrt{C_{k-1}})^2 + 2\eta_k\|\mu^*\| \Big(2 \|\mu^*\| + \sqrt{C_k}\Big)\right)}{t}\label{eq:norm-bound}\\
\le & \frac{\overline M \eta_k^2}{t}, \mbox{ where $\overline M:= 4\left(\frac{1}{r}C + \Big( 2(\frac{1-r+r^2}{r^2})\|\mu^*\|+ \frac{1-r}{r^2}\sqrt{C}\Big) \Big(2 \|\mu^*\| + \sqrt{C}\Big)\right)$},
\end{align}\label{eqs:iteration-bound}
\end{subequations}}
where \eqref{eq:telescope} is obtained by telescoping the inequality in Lemma \ref{lemm:descent} for the $t$ number of inner iterations and using Lemma \ref{lemm:monotone}; \eqref{eq:inter} holds from Lemma \ref{lemm:descent} and the fact that $w^t$ is an intermediate inner-loop iterate; \eqref{eq:consecutive} holds since $Q(u^{k-1}; \eta_k) - Q(u^{k-1}; \eta_{k-1})= \frac{1}{4}(\frac{1}{\eta_{k}} - \frac{1}{\eta_{k-1}})\|Du^{k-1}\|^2$ and $\eta_k=r \eta_{k-1}$; \eqref{eq:opt-bound} is obtained by adding and subtracting $F^*$ and using \eqref{eq:opt-bound:1} and \eqref{eq:opt-bound:2}; \eqref{eq:norm-bound} follows from \eqref{eq:2}.

Thus, if $t>\frac{\overline M}{\eta_k^2}$, it would contradict the assumption that $\|w^t - w^{t-1}\| > \eta_k^2$. Therefore, the inner loop takes at most $\frac{\overline M}{\eta_k^2}$ iterations. 
In addition, to achieve $\eta_k = \eta_1 r^k = \epsilon$, we require $k = \frac{\log (\epsilon/\eta_1) }{\log r}$ outer loop iterations. As a result, the total number of executions of \eqref{firm} needed is $\sum_{k=0}^{\frac{\log (\epsilon/\eta_1)}{\log r}} \frac{\overline M}{\eta_1^2}r^{-2k} = \frac{\overline M}{\eta_1^2} \frac{1-r^{-2(\frac{\log (\epsilon/\eta_1)}{\log r} + 1)}}{1-r^{-2}} = \frac{\overline M}{\eta_1^2} \frac{1-r^{\frac{\log (\eta_1/\epsilon)^2}{\log r}}r^{-2}}{1-r^{-2}} = \frac{\overline M}{\eta_1^2} \frac{1-(\eta_1/\epsilon)^2 r^{-2}}{1-r^{-2}} = O(\frac{1}{\epsilon^2})$.
\end{proof}

We show that for the outer-loop iterations $k$ where $\eta_k \ge \max\{\|\mu^*\|, \sqrt{f(u^*)}, \|A^\top Db\|\}$, the iteration complexity of the inner loop improves to $O(1/\eta_k)$, as opposed to the previous bound of $O(1/\eta_k^2)$. Therefore, \texttt{FIRM} requires a total of $O(1/\epsilon)$ executions of \eqref{firm} to achieve an accuracy $\epsilon \ge \max\{\|\mu^*\|, \sqrt{f(u^*)}, \|A^\top Db\|\}$; specifically, note that $\sum_{k=0}^{\frac{\log(\epsilon/\eta_1)}{\log r}} \frac{\overline M}{\eta_1}r^{-k}=\frac{\overline M}{\eta_1} \frac{1-(\eta_1/\epsilon) r^{-1}}{1-r^{-1}}=O(1/\epsilon)$. To establish this, we first present the following technical result that is essential for proving the desired outcome, the proof of which is given in Appendix \ref{appendix:tech}:
\begin{lemma}
\begin{enumerate}
\item[(i)] Let $h(z) := \|z\|^2$. Then, for any $\delta > 0$ and $z^*$, $|h(z) - h(z^*)| \le \delta$ implies $\|z-z^*\| \le \sqrt{\delta + \|z^*\|^2} + \|z^*\|$.
\item[(ii)] Let $x$ and $y$ be nonnegative scalars, then $\sqrt{x+y}+\sqrt{x} \le 2\sqrt{x+\frac{y}{2}}$.
\end{enumerate}
\label{lemm:tech}
\end{lemma}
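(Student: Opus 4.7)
The plan is to handle the two parts separately, each by short and self-contained arguments.

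For part (i), the approach is just the triangle inequality together with an elementary square-root bound. From the hypothesis $|\|z\|^2 - \|z^*\|^2| \le \delta$ one side of the absolute value gives $\|z\|^2 \le \|z^*\|^2 + \delta$, hence $\|z\| \le \sqrt{\delta + \|z^*\|^2}$ upon taking nonnegative square roots. Then $\|z - z^*\| \le \|z\| + \|z^*\| \le \sqrt{\delta + \|z^*\|^2} + \|z^*\|$, which is exactly the stated bound. There is no obstacle here beyond keeping track of which side of the absolute value is actually used.

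For part (ii), the cleanest route is to recognize the desired inequality as the concavity of the square root. Since $\sqrt{\cdot}$ is concave on $[0,\infty)$, Jensen's inequality yields $\tfrac{1}{2}(\sqrt{a} + \sqrt{b}) \le \sqrt{\tfrac{a+b}{2}}$ for all $a, b \ge 0$. Choosing $a = x + y$ and $b = x$ gives $\tfrac{a+b}{2} = x + \tfrac{y}{2}$, and multiplying through by $2$ produces the claim.

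If one prefers a purely algebraic verification, the alternative is to square twice. Both sides being nonnegative, squaring reduces $\sqrt{x+y} + \sqrt{x} \le 2\sqrt{x + y/2}$ to $2x + y + 2\sqrt{x^2 + xy} \le 4x + 2y$, i.e.\ $2\sqrt{x^2 + xy} \le 2x + y$; squaring again (both sides nonnegative) reduces it to $4x^2 + 4xy \le 4x^2 + 4xy + y^2$, which is the trivially true $0 \le y^2$. Either route works; I would include the Jensen version in the paper since it is shorter and more transparent. In summary, the only care required throughout is to ensure that squaring is applied to nonnegative quantities, so there is no substantive difficulty in either part.
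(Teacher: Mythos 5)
Your proposal is correct and is in substance the same as the paper's argument: for (i) the paper packages the identical triangle-inequality-plus-one-sided-constraint bound as a relaxation of the maximization $\max\{\|z-z^*\| : |\|z\|^2-\|z^*\|^2|\le\delta\}$ (adding only a remark that the bound is tight), and for (ii) the paper squares once and applies the arithmetic--geometric mean inequality to $x+y$ and $x$, which is exactly the inequality your Jensen step (and your second squaring) reduces to. No gaps.
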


Building upon Theorem \ref{theo:convergence} and Lemma \ref{lemm:tech}, we establish the following:
\begin{theorem} \label{thm:convergence_rate}
 Let $(u^{*}, \mu^*)$ be any optimal primal-dual solution pair to \eqref{modified_model}, i.e., it satisfies \eqref{eq:KKT}. For the outer-loop iterations $k \in \mathbb Z_{\ge 0}$ with 
 \begin{equation}\eta_k \ge \max\{\|\mu^*\|, \sqrt{f(u^*)}, \|A^\top Db\|\},\label{eq:eta-bound}\end{equation} 
 the iteration complexity of the inner loop becomes $O(1/\eta_k)$. 
\end{theorem}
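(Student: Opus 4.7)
My plan is to refine the analysis of Theorem~\ref{theo:convergence} under the additional conditions on $\eta_k$. The termination criterion $\|w^t - w^{t-1}\| \le \eta_k^2$ combined with $\|w^t - w^{t-1}\|^2 \le 4\eta_k \bigl( Q(u^{k-1};\eta_k) - Q(u^k;\eta_k) \bigr)/t$ implies that $t = O(1/\eta_k)$ inner iterations suffice provided $Q(u^{k-1};\eta_k) - Q(u^k;\eta_k) = O(\eta_k^2)$. Tracing through the chain~\eqref{eqs:iteration-bound}, each term there becomes $O(\eta_k^2)$ as soon as one can show $C_{k-1} = O(\eta_k)$, since $\|\mu^*\|^2 \le \eta_k^2$ is immediate from $\|\mu^*\| \le \eta_k$ and the constant $\eta_{k-1}/\eta_k = 1/r$ is harmless.

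To establish $C_{k-1} = O(\eta_k)$, I would first note that the conditions yield $\|\nabla F(u^*)\| = O(\eta_k)$: using $Du^* = 0$ we have $\nabla F(u^*) = 2\bar A^\top(\bar A u^* - b) + D^\top A^\top Db$, so $\|\nabla F(u^*)\| \le 2\|A\|\sqrt{f(u^*)} + \|D\|\|A^\top Db\| = O(\eta_k)$, and the $\|\nabla F(u^*)\|\eta_{k-1}$ and $\eta_{k-1}^2$ pieces of $C_{k-1}$ are then trivially $O(\eta_k)$. The crux is to bound $\|u^{k-1} - u^*\| = O(\eta_k)$. For this, Lemma~\ref{lemm:tech}(i) applied with $h(z) = \|z\|^2$, $z = \bar A u^{k-1} - b$, and $z^* = \bar A u^* - b$, followed by Lemma~\ref{lemm:tech}(ii), yields $\|\bar A(u^{k-1} - u^*)\| \le 2\sqrt{f(u^*) + |f(u^{k-1}) - f(u^*)|/2}$. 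The quantity $|f(u^{k-1}) - f(u^*)|$ can be related to $|Q(u^{k-1};\eta_{k-1}) - F^*|$ plus the cross-terms $\tfrac{1}{2}\|ADu^{k-1}\|^2$ and $(ADu^{k-1})^\top Db$; combining Proposition~\ref{prop:opt-gap}, the bound $\|Du^{k-1}\| \le 2\eta_{k-1}(2\|\mu^*\|+\sqrt{C_{k-1}})$ from~\eqref{eq:2}, and the condition $\|A^\top Db\| \le \eta_k$, each of those pieces is $O(\eta_k^2)$, so $\|\bar A(u^{k-1} - u^*)\| = O(\eta_k)$.

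The main obstacle is translating $\|\bar A(u^{k-1} - u^*)\| = O(\eta_k)$ into a bound on $\|u^{k-1} - u^*\|$ itself, since $\bar A$ need not be full column rank for the typical Radon operator. My plan is to reopen the derivation of $C_k$ in Proposition~\ref{prop:opt-gap}: the factor $\|u^k - u^*\|$ there arose solely from the smoothness estimate $\|\nabla Q^k - \nabla F(u^*)\| \le L(\eta_k)\|u^k - u^*\|$, and can instead be controlled through the explicit decomposition $\nabla Q^k - \nabla F(u^*) = 2\bar A^\top \bar A(u^k - u^*) - D^\top A^\top AD u^k + \tfrac{1}{2\eta_k}D^\top D u^k$ (using $Du^* = 0$), each summand bounded by a constant multiple of $\|\bar A(u^{k-1} - u^*)\|$ or $\|Du^{k-1}\|$. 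This produces a refined $C_{k-1}$ that depends only on those two already-controlled quantities. The delicate piece is the penalty term $\tfrac{1}{2\eta_k}\|D^\top D u^{k-1}\| = O(\|Du^{k-1}\|/\eta_k)$: a self-consistent bootstrap (where $C_{k-1} = O(\eta_k)$ yields $\|Du^{k-1}\| = O(\eta_{k-1}^{3/2})$, which in turn keeps the penalty term at the required order) closes the loop and delivers the improved inner-loop complexity $O(1/\eta_k)$.
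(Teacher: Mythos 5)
Your overall strategy---reduce the inner-loop complexity to showing $C_{k-1}=O(\eta_k)$, hence to showing $\|u^{k-1}-u^*\|=O(\eta_k)$, and get there by first bounding $\|\bar A(u^{k-1}-u^*)\|=O(\eta_k)$ via Lemma~\ref{lemm:tech}---matches the paper up to exactly the point you yourself flag as the main obstacle. But your proposed resolution of that obstacle does not work. You claim the factor $\|u^k-u^*\|$ in $C_k$ ``arose solely from the smoothness estimate'' on $\nabla Q^k$ and can therefore be re-expressed through $\|\bar A(u^k-u^*)\|$ and $\|Du^k\|$ by decomposing the gradient. That is not how Proposition~\ref{prop:opt-gap} produces this factor: the dominant contribution is the term $\|u^k-u^*\|_1$ in \eqref{opt-gap:5}, which comes directly from Cauchy--Schwarz applied to $(\nabla Q^k)^\top(u^k-u^*)$ over the index set $\mathcal J_k$ (see \eqref{eq:opt-gap:1}), not from any gradient-difference bound. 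A component of $u^k-u^*$ lying in $\ker(\bar A)\cap\ker(D)$ is invisible to both $\|\bar A(u^k-u^*)\|$ and $\|Du^k\|$ yet contributes fully to that term, so no decomposition of $\nabla Q^k-\nabla F(u^*)$ can eliminate it, and your bootstrap never closes.

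The missing ingredient is Hoffman's error bound for polyhedral sets. The optimal set of \eqref{modified_model} is the polyhedron $\mathcal W^*=\{w:\ w\ge 0,\ Dw=0,\ \bar Aw=\bar Au^*\}$, and Hoffman's bound supplies a constant $\gamma>0$ with $\|u^k-u^*\|\le\gamma\sqrt{\|Du^k\|^2+\|\bar A(u^k-u^*)\|^2}$; this is precisely the device that converts your residual estimates into a bound on $\|u^k-u^*\|$ despite the rank deficiency of $\bar A$. Combining it with \eqref{eq:2} and your $\|\bar A(u^k-u^*)\|$ bound yields a self-bounding inequality of the form $x\le\gamma\sqrt{c_1x+c_2\sqrt{x}+c_3}$ for $x=\|u^k-u^*\|/\eta_k$, which forces $x$ to be bounded by a constant independent of $k$---no iteration-indexed bootstrap is needed, and the claimed intermediate estimate $\|Du^{k-1}\|=O(\eta_{k-1}^{3/2})$ is then a consequence rather than an input. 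With that in hand, the rest of your outline (each term of \eqref{eqs:iteration-bound} becoming $O(\eta_k^2)$, hence $t=O(1/\eta_k)$) is the paper's argument. A minor remark: your observation that $\|\nabla F(u^*)\|=O(\eta_k)$ is unnecessary, since the corresponding term in $C_{k-1}$ already carries an explicit factor of $\eta_{k-1}$.
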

\begin{proof}
Let $\Omega:=\sqrt{Nn}\|\nabla F(u^*)\|+ Nn\eta_1$ and $\omega:=\frac{5}{2}\sqrt{Nn}$ so that $C_k \le \omega \|u^k-u^*\| + \eta_k \Omega$. First, note that when $\eta_k$ satisfies \eqref{eq:eta-bound}, we have
{\footnotesize
\begin{subequations}
    \begin{align}
 \|w^t - w^{t-1}\|^2  \le & \frac{4\eta_k\left(\frac{1}{r}\eta_k C_{k-1}  + \frac{1-r}{r^2}\eta_{k}(2\|\mu^*\|+\sqrt{C_{k-1}})^2 + 2\eta_k\|\mu^*\| \Big(2 \|\mu^*\| + \sqrt{C_k}\Big)\right)}{t}\label{eq:norm-bound2}\\
\le & \frac{4\eta_k\left(\frac{1}{r}\eta_k C_{k-1}  + \frac{1-r}{r^2}\eta_{k}(2\eta_k+\sqrt{C_{k-1}})^2 + 2\eta_k^2 \Big(2 \eta_k + \sqrt{C_k}\Big)\right)}{t},\label{eq:eta}
\end{align}
\end{subequations}}
where \eqref{eq:norm-bound2} is from \eqref{eq:norm-bound}, and \eqref{eq:eta} follows from the assumption $\eta_k \ge \|\mu^*\|$.

Note that if $\frac{\|u^{k}-u^*\|}{\eta_k}$ is upper bounded by some constant that is independent of $k$ for $k$ satisfying \eqref{eq:eta-bound}, then $C_k = \eta_k \texttt{c}$ for some constant \texttt{c}, and thus by using $\eta_k = r \eta_{k-1}$, we get:
{\footnotesize\begin{align*}
 \|w^t - w^{t-1}\|^2 \le \frac{4\eta_k^3\left(\frac{1}{r^2} \texttt{c}  + \frac{1-r}{r^2}(2\sqrt{\eta_k}+\frac{1}{\sqrt{r}}\sqrt{\texttt{c}})^2 + 2 \Big(2 \eta_k + \sqrt{\eta_{k}\texttt{c}}\Big)\right)}{t},
\end{align*}}
which yields
\begin{align}
\|w^t - w^{t-1}\|^2  \le \frac{\texttt{C}\eta_k^3}{t}, \label{bound_iterates}
\end{align}
where $\texttt{C}:= 4 \left[ \frac{1}{r^2}\texttt{c} + \frac{1-r}{r^2}\left(2\sqrt{\eta_1} + \frac{1}{\sqrt{r}}\sqrt{\texttt{c}}\right)^2 + 2\left(2\eta_1 + \sqrt{\eta_1}\sqrt{\texttt{c}}\right) \right]$. Thus, if $\frac{\|u^{k}-u^*\|}{\eta_k}$ is upper bounded by some constant that is independent of $k$ for $k$ with \eqref{eq:eta-bound}, the associated inner loops take at most $\frac{\texttt{C}}{\eta_k}$ iterations, and the desired result follows.

We show that $\frac{\|u^{k}-u^*\|}{\eta_k}$ is indeed upper bounded by a constant that is independent of $k$ for $k$ satisfying \eqref{eq:eta-bound}. First, note that from \eqref{eq:2} we have
\begin{equation}
\frac{\|Du^k\|}{\eta_k} \le 2(2\|\mu^*\| + \sqrt{C_k}) \le \texttt{a} \sqrt{\frac{C_k}{\eta_k}} + \texttt{b},
\label{eq:conv:2}
\end{equation}
for $\texttt{a} := 2\sqrt{\eta_1}$ and $\texttt{b} := 4\eta_1$ since $\|\mu^*\| \le \eta_k \le \eta_1$ and $\sqrt{C_k} = \sqrt{\eta_k}\sqrt{\frac{C_k}{\eta_k}} \le \sqrt{\eta_1}\sqrt{\frac{C_k}{\eta_k}}$. 

Note that, from \eqref{eq:opt-bound:1}, \eqref{eq:opt-bound:2}, and \eqref{eq:2}, we have 
\begin{align}|Q(u^{k}; \eta_k) - F(u^*)| & \le  \max\{\eta_kC_{k}, \|\mu^*\|\|Du^k\|\} \le \eta_kC_{k}+\|\mu^*\|\|Du^k\|,\label{eq:opt-bound:combined}
\end{align}
where the last inequality follows from $\max\{x,y\} \le x+y$ for nonnegative $x, y$, and
{\footnotesize\begin{subequations}
\begin{align}&|Q(u^{k}; \eta_k) - F(u^*)| \nonumber \\
& = \Big|\|\bar Au^{k} - b\|^2 -\frac{1}{2}\|ADu^{k} - Db\|^2 + \frac{1}{4\eta_k}\|Du^{k}\|^2 - \|\bar Au^* - b\|^2 + \frac{1}{2}\|Db\|^2\Big|\nonumber \\
& = \Big|\|\bar Au^{k} - b\|^2 - \frac{1}{2}\|ADu^{k}\|^2 + b^\top D^\top ADu^{k} + \frac{1}{4\eta_k}\|Du^{k}\|^2 - \|\bar Au^* - b\|^2\Big| \nonumber\\
& \ge \Big|\|\bar Au^{k} - b\|^2 + b^\top D^\top ADu^{k}  - \|\bar Au^* - b\|^2\Big|\label{eq:bound:1}\\
& \ge \Big|\|\bar Au^{k} - b\|^2 - \|\bar Au^* - b\|^2\Big| - \| A^\top D b\|\|Du^{k}\|,\label{eq:bound:2}
\end{align}
\end{subequations}}
where \eqref{eq:bound:1} follows from $\frac{1}{2\eta_k}D^\top D -  D^\top A^\top AD  = D^\top (\frac{1}{2\eta_k}I_n - A^\top A)D\succeq 0$ since $\frac{1}{2\eta_k} \ge \frac{1}{2\eta_1} = \frac{2\lambda_{\max}(A^\top A)}{2-\sum_{i=1}^{N-1}c_i^2} \ge \lambda_{\max}(A^\top A)$ for any $k \in \mathbb Z_{\ge 0}$; \eqref{eq:bound:2} is obtained from $|x-y| \ge |x| - |y|$ and the Cauchy-Schwarz inequality.

Combining \eqref{eq:opt-bound:combined} and \eqref{eq:bound:2}, we obtain
{\footnotesize\begin{align*}\Big|\|\bar Au^{k} - b\|^2 - \|\bar Au^* - b\|^2\Big| &  \le \eta_kC_{k} + (\|\mu^*\|+\|A^\top D b\|)\|Du^{k}\| \\
&\le \eta_k^2\left(\frac{C_{k}}{\eta_k} + 2\frac{\|Du^{k}\|}{\eta_k}\right) \\
&\le \eta_k^2\left(\frac{C_{k}}{\eta_k} + 2\texttt{a} \sqrt{\frac{C_{k}}{\eta_k}} + 2\texttt{b}\right),
\end{align*}}
where the second inequality follows from \eqref{eq:eta-bound} with the last inequality from \eqref{eq:conv:2}.

Then, Lemma \ref{lemm:tech}(i) yields
{\footnotesize\begin{align}\|\bar A(u^{k} - u^*)\| & \le \sqrt{\eta_k^2\left(\frac{C_{k}}{\eta_k} + 2\texttt{a} \sqrt{\frac{C_{k}}{\eta_k}} + 2\texttt{b}\right) + f(u^*)} + \sqrt{f(u^*)}\nonumber \\
& \le 2\sqrt{\eta_k^2\left(\frac{C_{k}}{2\eta_k} + \texttt{a} \sqrt{\frac{C_{k}}{\eta_k}} + \texttt{b}\right) + f(u^*)}\nonumber \\
& \le 2\eta_k\sqrt{\frac{C_{k}}{2\eta_k} + \texttt{a} \sqrt{\frac{C_{k}}{\eta_k}} + \underbrace{\texttt{b}+1}_{=:\texttt{b}'}},\label{eq:conv:3}
\end{align}}
where $f(u^*) = \|\bar Au^* - b\|^2$, the second inequality follows from Lemma \ref{lemm:tech} (ii), and the last inequality from \eqref{eq:eta-bound}. 

We now apply Hoffman's bound to estimate a bound on $\|u^k - u^*\|$ using the bound established for $\|A(u^{k} - u^*)\|$. Hoffman's bound provides a way to measure how close a point is to the feasible set of a system of linear inequalities, based on the constraint violations. In our case, the set of optimal solutions can be expressed as: $\mathcal W^* := \{w \in \mathbb R^{Nn}: w \ge 0, Dw = 0, \bar Aw = \bar Au^*\}$. By Hoffman's bound, the distance $\|u^k-u^*\|$, representing how close the point $u^k$ to the optimal solution set $\mathcal W^*$, can be bounded by the extent to which $u^k$ violates the linear inequalities defining $\mathcal W^*$. Specifically, there exists a constant $\gamma>0$ such that
\begin{equation}
\|u^k - u^*\| \le \gamma \left\|\Big((u^k)_-; Du^k; \bar A(u^k - u^*)\Big)\right\| = \gamma \sqrt{\|Du^k\|^2 + \|\bar A(u^k - u^*)\|^2},
\end{equation}
where the equality follows since $u^k \ge 0$. Therefore, we have
{\footnotesize\begin{align*}\frac{\|u^k - u^*\|}{\eta_k} &\le \gamma \sqrt{\left(\frac{\|Du^k\|}{\eta_k}\right)^2 + \left(\frac{\|\bar A(u^k - u^*)\|}{\eta_k}\right)^2}\\
&\le \gamma \sqrt{\left(\texttt{a} \sqrt{\frac{C_{k}}{\eta_k}} + \texttt{b}\right)^2 + 2\frac{C_k}{\eta_k}+ 4\texttt{a} \sqrt{\frac{C_k}{\eta_k}} + 4\texttt{b}'}\\
& =  \gamma \sqrt{(\texttt{a}^2+2)\frac{C_{k}}{\eta_k} + (2\texttt{ab} + 4\texttt{a})\sqrt{\frac{C_{k}}{\eta_k}} + \texttt{b}^2 + 4\texttt{b}'}\\
& \le \gamma \sqrt{(\texttt{a}^2+2) \omega \frac{\|u^k-u^*\|}{\eta_k}  + (2\texttt{ab}+4\texttt{a}) \sqrt{\omega \frac{\|u^k-u^*\|}{\eta_k}}+\texttt{const}},
\end{align*}}
where \(\texttt{const} =  (\texttt{a}^2+2)\Omega+(2\texttt{ab}+4\texttt{a})\sqrt{\Omega} + \texttt{b}^2 + 4\texttt{b}'\), the second inequality follows from \eqref{eq:conv:2} and \eqref{eq:conv:3}, and the last inequality from $\sqrt{x+y} \le \sqrt{x} + \sqrt{y}$ for nonnegative scalars $x$ and $y$. Note that 
the resultant inequality is violated for any sufficiently large value of $\frac{\|u^k - u^*\|}{\eta_k}$. Hence, the desired result follows.
\end{proof}

\begin{remark}The measurements \( b \) often include noise, e.g., \( b = b_{\texttt{true}} + \delta \), where \( \delta \) represents the realized noise, perturbing the true measurements \( b_{\texttt{true}} \).
 Notably, the bound $\max\{\|\mu^*\|, \sqrt{f(u^*)}, \|A^\top Db\|\}$ can be quite small for low-noise instances, i.e., $b \approx b_{\texttt{true}}$. Accurate measurements $b$ reduce the cost of satisfying the equality constraint, as the image that minimizes the associated least squares naturally satisfies the equality constraint. This makes $\mu^*$ small. Additionally, lower noise reduces discrepancies in measurements, leading to smaller $Db$ since $Db_{\texttt{true}}=0$, smaller least squares residuals, and a smaller $f(u^*)$. This implies that \texttt{FIRM} will achieve a $\epsilon$-optimality with an iteration complexity of $O(1/\epsilon)$ for a reasonable accuracy $\epsilon$ with low noise. As noise increases, however, the bound grows, which can negatively affect the convergence rate of \texttt{FIRM} for achieving high accuracy. Nonetheless, solving \eqref{basic_model} to high precision is often unnecessary in noisy cases, meaning \texttt{FIRM} can still perform effectively.
    \label{rema:eps-convergence}
\end{remark}

\begin{remark}
    Recent developments in first-order penalty methods for general convex programming problems have yielded some relevant convergence analyses \cite{lan2013iteration,necoara2019complexity,li2020decentralized}. Of particular relevance is \cite{lan2013iteration}, where the accelerated Nesterov's method is employed to solve the penalized problem. This work establishes an iteration complexity of \(O(1/\epsilon^2)\) for the first-order penalty method applied to the original problem, with a penalty parameter updated via a guess-and-check framework. In this paper, we prove that the same convergence rate can be achieved using a simple geometric penalty parameter update rule, combined with a problem-tailored inner-loop termination criterion, as outlined in Algorithm \ref{algo}. Additionally, we provide conditions under which the algorithm achieves an improved \(O(1/\epsilon)\) rate of convergence. Another relevant contribution is \cite{necoara2019complexity}, which demonstrates that one side of the optimality gap can be closed within \(O(1/\epsilon^{1.5})\) iterations, specifically for achieving \(F(u^k) - F(u^*) \leq \epsilon\) with \(\|Du^k\| \leq \epsilon\), providing only an \(\epsilon\)-asymptotic lower bound to the original optimal objective value. In contrast, our work presents the convergence rate required to approach the true optimal solution satisfying the KKT conditions, without needing prior knowledge of the optimal Lagrange multiplier \(\mu^*\). Lastly, \cite{li2020decentralized} analyzed a first-order accelerated penalty method with an iteration complexity of \(O(1/\epsilon^2)\).
    \label{rema:convergence-results}
\end{remark}

\subsection{\texttt{FIRM}$^+$: An Augmented Lagrangian Variant of \texttt{FIRM}} \label{sec:augmented_lagrangian}
The interpretation of \texttt{FIRM} as an inexact first-order QP method for solving \eqref{modified_model} naturally suggests its extension to an inexact first-order Augmented Lagrangian (AL) method to leverage the favorable properties of AL methods.
We refer to this extended method as \texttt{FIRM}$^+$. In this section, we outline the extension of \texttt{FIRM} to \texttt{FIRM}$^+$, deferring a detailed analysis to future research. This can be achieved by introducing Lagrange multipliers \( \mu \) for the penalized constraints \( D w = 0 \).  Specifically, AL method works with a modified penalty function, commonly called the augmented Lagrangian function,
$\mathcal{L}_{A} (w, \mu; \eta) :=  Q(w; \eta)    + \langle \mu, Dw \rangle,$
which adds a Lagrangian term of $\langle \mu, Dw \rangle$ to the quadratic penalty function $Q$, where $\mu \in \mathbb{R}^{n}$ represents the dual variables associated with the constraint $Dw=0$ in \eqref{modified_model}.

AL method solves a sequence of problems $\{\min_{w \ge 0}\mathcal{L}_{A} (w, \mu^k; \eta_k)\}_k$, while updating $\mu^k$ according to the rule:
\begin{align}
& \mu^{k+1} \gets \mu^k + \frac{1}{2\eta_k} Du^{k}, \label{dual_update}
\end{align}
where $u^k$ is an approximate solution to $\min_{w \ge 0}\mathcal{L}_{A} (w, \mu^k; \eta_k)$.

In our extension, each $\min_{w \ge 0}\mathcal{L}_{A} (w, \mu^k; \eta_k)$ is solved approximately via several PGD steps. Note that the update on $w$ after one PGD step from the current iterate $w^t$ can be expressed as follows, based on \eqref{firm:equiv}:
$$w^{t+1} \gets \Big(w^t - \eta_k g^t - \frac{1}{2} D^\top D v^t - \eta_kD^\top \mu^k\Big)_+,$$
which suggests a natural modification to \eqref{our_approach_update}, leading to the following: 
\begin{align}
& z^t_i \gets
\begin{cases}
v^t_i + c_i (x^t - y^t) + \eta_k c_i \mu^k & \text{ for } i \in [N-1], \\
x^t  - \eta_k \mu^k & \text{ for } i = N.
\end{cases}  \label{al_update}
\end{align}

Thus, an extension \texttt{FIRM}$^+$ is obtained by replacing \eqref{our_approach_update} in Line \ref{line:server} with \eqref{al_update}, and by incorporating the $\mu$-update step \eqref{dual_update} just before Line \ref{line:update}.


\section{Experimental Results}
\label{sec:experiments}
In this section, we numerically demonstrate the performance of \texttt{FIRM} for solving \eqref{basic_model} and the benefits of incorporating multimodality on solution quality by comparing the results of \eqref{basic_model} to those of the individual image reconstruction (IIR) model. All numerical tests were performed on a workstation composed of (i) NVIDIA RTX 6000 Ada Generation, 48 GB GDDR6 GPU, and (ii) 56 cores with Intel Xeon w9-3495X processors and 512 GB DDR5 of memory.

\subsection{Data} \label{sec:experimental_data}
\begin{figure}[t!]
    \centering
    \begin{subfigure}[b]{0.24\textwidth}
    \centering
    \includegraphics[width=\textwidth,trim={2cm 0.5cm 2cm 1cm},clip]{./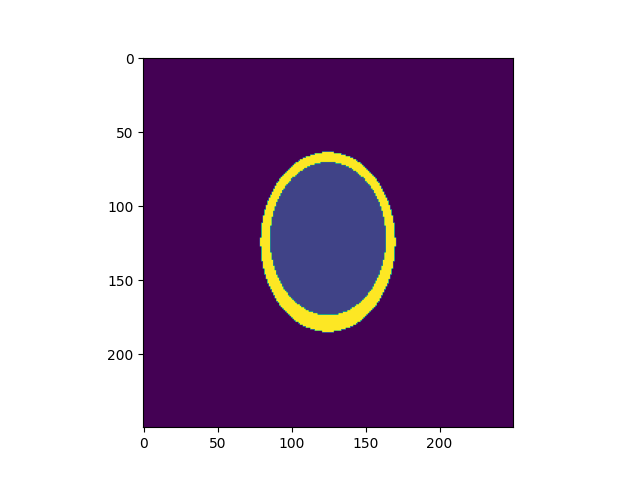} 
    \caption{XRF-1}  
    \end{subfigure}
    \begin{subfigure}[b]{0.24\textwidth}
    \centering
    \includegraphics[width=\textwidth,trim={2cm 0.5cm 2cm 1cm},clip]{./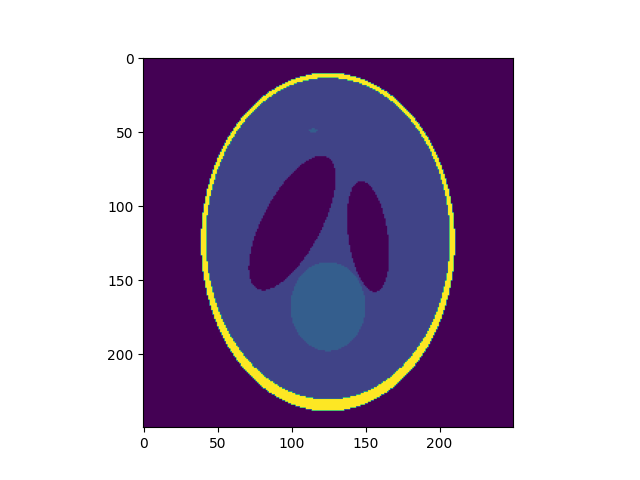}
    \caption{XRF-2}  
    \end{subfigure}
    \begin{subfigure}[b]{0.24\textwidth}
    \centering
    \includegraphics[width=\textwidth,trim={2cm 0.5cm 2cm 1cm},clip]{./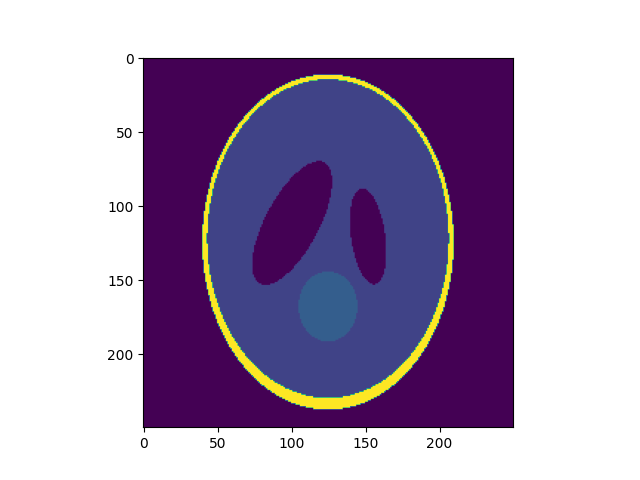} 
    \caption{XRF-3}  
    \end{subfigure}
    \begin{subfigure}[b]{0.24\textwidth}
    \centering
    \includegraphics[width=\textwidth,trim={2cm 0.5cm 2cm 1cm},clip]{./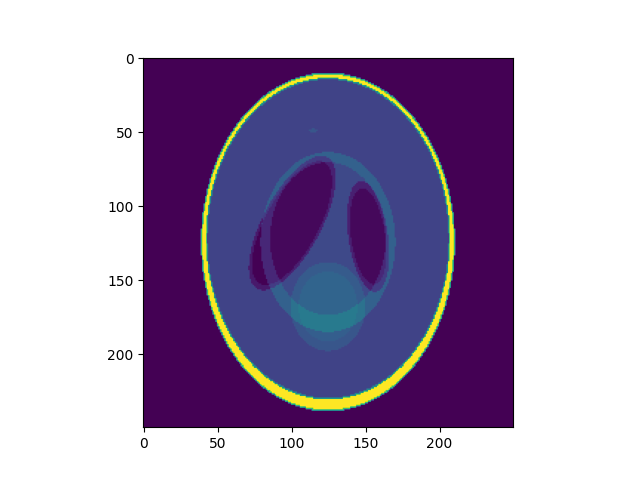} 
    \caption{XRT} 
    \end{subfigure}
    \caption{The ground-truth images.}
    \label{fig:ground_truth} 
    \end{figure}
    We use synthetic samples derived from variants of the Shepp-Logan phantom, referred to as XRF-1, XRF-2, XRF-3, and XRT (see Fig.~\ref{fig:ground_truth}). The index \(i \in [N] := [4]\) corresponds to XRF-1, XRF-2, XRF-3, or XRT in the given order. These ground-truth images, each of size \(250 \times 250\), are represented in vectorized form as \(w_i^{\text{true}} \in \mathbb{R}^n\), where \(n = 250^2\) and \(i \in [N]\). 

    For a varying number of angles \( |\Theta| \in \mathcal{N}_{\theta} := \{25, 50, 75, 100, 125, 150, 175\} \), we generate the corresponding tomographic dataset \(\mathcal{B}_{|\Theta|}\) to evaluate performance across different data quantities while keeping the number of beamlets fixed at \( |\mathcal{T}| = \lceil \sqrt{2n} \rceil = 354 \). 
    This results in datasets of the form:
\(
\mathcal{B}_{|\Theta|} = \{b_i \in \mathbb{R}^{|\mathcal{T}| |\Theta|}, \forall i \in [N]\} 
\)
for \( |\Theta| \in \mathcal{N}_{\theta} \). The corresponding discrete Radon transform \( A \) is adjusted based on the value of $|\Theta|$ such that \( A \in \mathbb{R}^{|\Theta| |\mathcal{T}| \times n} \). In practice, tomographic datasets may involve noise originating from experimental processes.
To capture this, we generate random noisy tomographic datasets \(\widetilde{\mathcal{B}}_{|\Theta|,s}\) by adding Gaussian noise with zero mean and standard deviation \(s \in \mathcal{S} := \{0, 0.01, 0.05, 0.1\}\) to each vector \(b \in \mathcal{B}_{|\Theta|}\). For each $s \in \mathcal S \setminus \{0\}$, this process results in the random noisy dataset \(\widetilde{\mathcal{B}}_{|\Theta|,s}\), while for $s=0$, we have $\widetilde{\mathcal{B}}_{|\Theta|,0} = \mathcal B_{|\Theta|}$, i.e., a noise-free deterministic dataset. As \(s\) increases, \(\widetilde{\mathcal B}_{|\Theta|,s}\) becomes progressively noisier. We provide visual examples of the tomographic datasets in Appendix~\ref{apx:data}. Finally, the coefficients of the linear combination constraints are given as  \(
c = [0.1 \ \ 0.6 \ \ 0.3]^\top.
\)

\subsection{Performance of \texttt{FIRM}} \label{sec:algorithm_comparison}

\begin{table}[b!]
    \centering
    \footnotesize
    \caption{The value of $\tau^s:=\max\{\|\mu^*\|, \sqrt{f^*}, \|A^{\top} Db^s\| \}$}
    \begin{tabular}{|l| r r r r|} 
    \hline
           $s$     & $0$ & $0.01$ & $0.05$ & $0.1$  \\ \hline        
    $\tau^s$ & $9.44 \times 10^{-5}$ &   $1.18\times 10^{-2}$ &   $5.55\times 10^{-2}$ &   $1.11\times 10^{-1}$ \\ \hline
    \end{tabular}
    \label{table:each_element}
    \end{table}
We numerically demonstrate the performance of \texttt{FIRM} for solving \eqref{basic_model} with a termination tolerance of $\epsilon=10^{-2}$. In this section, we utilize a single sample \(\mathcal{B}_{25,s}\) of the random tomographic dataset \(\widetilde{\mathcal{B}}_{25,s}\) for \(s \in \mathcal{S}\). Let \(b^s\) denote the concatenated vector obtained by appending the \(b_i\)'s in \(\mathcal{B}_{25,s}\). As shown in Theorem \ref{thm:convergence_rate}, the convergence rate of \texttt{FIRM} is given by \(\mathcal{O}(1/\epsilon)\) for \(\epsilon \geq \tau^s := \max\{\|\mu^*\|, \sqrt{f^*}, \|A^{\top} Db^s\|\}\). In Table \ref{table:each_element}, we present the values of \(\tau^s\), which are obtained by approximating \(\|\mu^*\|\) and \(\sqrt{f^*}\) through the solution of \eqref{basic_model}, computed using \texttt{FIRM+} for each dataset $\mathcal B_{25,s}$. As noted in Remark \ref{rema:eps-convergence}, \(\tau^s\) can be small in the noise-free case (i.e., when \(s=0\)), but it tends to increase as the noise level \(s\) increases. This indicates that \texttt{FIRM} can achieve an improved convergence rate of \(\mathcal{O}(1/\epsilon)\) for fairly small \(\epsilon\) values when the noise level is low. On the other hand, as \(s \in \mathcal{S}\) increases, \(\tau^s\) also grows.
A larger \(\tau^s\) may negatively affect the convergence rate of \texttt{FIRM} in achieving high accuracy. However, obtaining the lowest possible objective value in \eqref{basic_model} under noisy conditions is often unnecessary, as solutions with lower objective values may deviate more from the ground truth due to the noise (see Figure \ref{fig:ratio_noise}). This suggests that \texttt{FIRM} can still perform effectively in noisy scenarios.

\subsubsection{Effect of the ratio parameter}\label{sec:effect-r}
 
\begin{figure}[t!]
    \centering
    \begin{subfigure}[b]{0.25\textwidth}
    \centering
    \includegraphics[width=\textwidth]{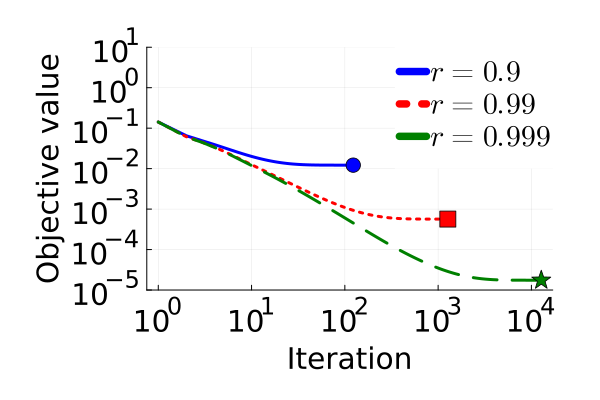}  
    \includegraphics[width=\textwidth]{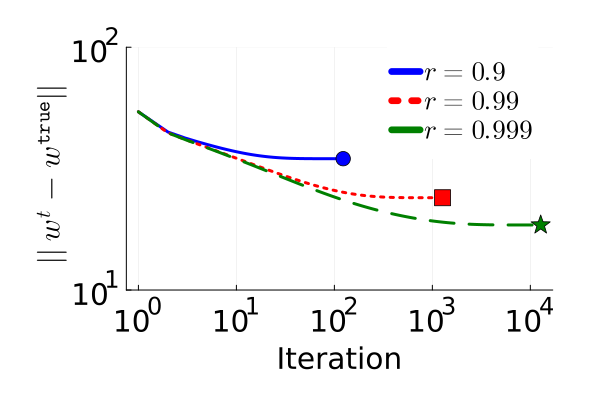}
    \caption{$s=0$}  
    \label{fig:ratio_no_noise}
    \end{subfigure}
    \begin{subfigure}[b]{0.25\textwidth}
    \centering
    \includegraphics[width=\textwidth]{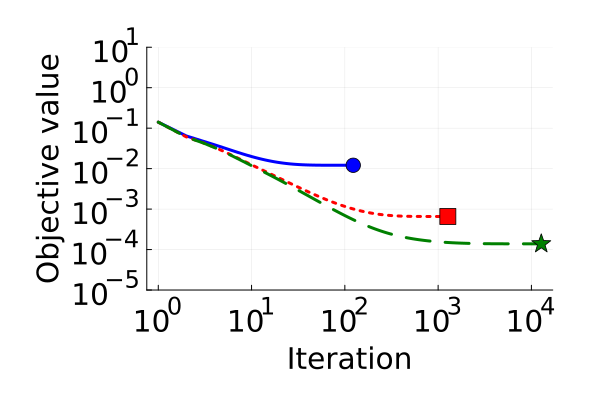}  
    \includegraphics[width=\textwidth]{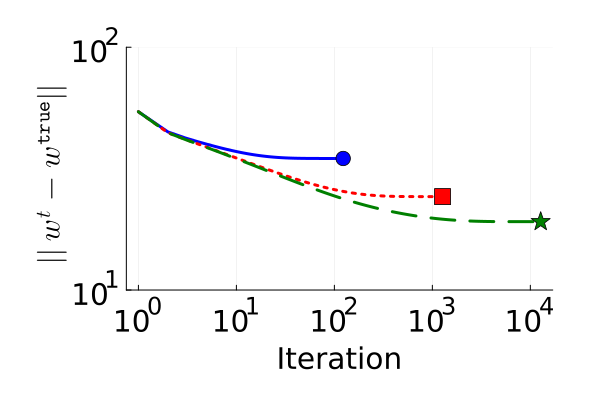}
    \caption{$s=0.01$}  
    \end{subfigure}
    \begin{subfigure}[b]{0.25\textwidth}
    \centering
    \includegraphics[width=\textwidth]{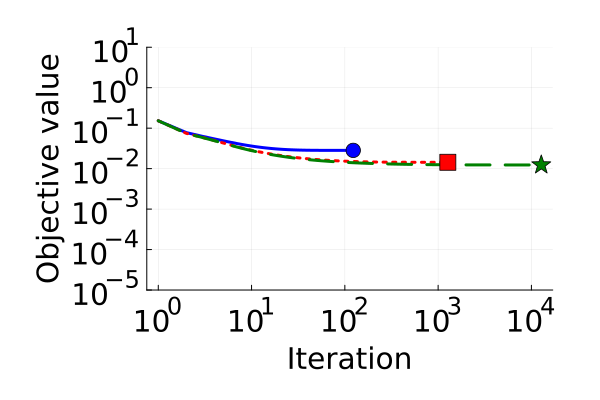}  
    \includegraphics[width=\textwidth]{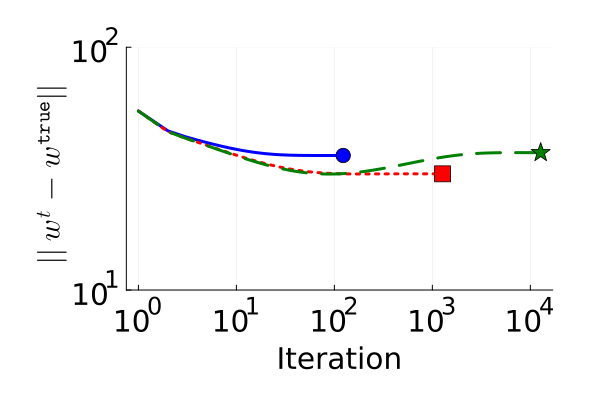}
    \caption{$s=0.1$}  
    \label{fig:ratio_noise}
    \end{subfigure}
    \caption{The objective function value $f(w^{t})$ and the distance from the current iterate to the ground-truth $\|w^{t} - w^{\texttt{true}}\|$ for every iteration $t \in \mathbb{Z}_{\geq 0}$. Note that both x- and y-axis are log scaled.}
    \label{fig:effect_of_ratio}
    \end{figure}

    The ratio parameter $r$ in \texttt{FIRM} determines how fast the step size $\eta_k$ decreases, i.e., the penalty parameter $0.25/\eta_k$ increases, where $k \in \mathbb{Z}_{\geq 0}$ represents the outer-loop iteration. 
    Figure \ref{fig:effect_of_ratio} illustrates the objective function value \( f(w^{t}) \) and the distance from the current iterate to the ground truth \( \|w^{t} - w^{\text{true}}\| \) for each iteration \( t \in \mathbb{Z}_{\geq 0} \), where all inner loop iterates are appended throughout the entire algorithm run, under various values of \( r \in \mathcal{R} := \{0.9, 0.99, 0.999\} \) and \( s \in \mathcal{S} \). Regardless of the value of \( r \), \texttt{FIRM} produces an \( \epsilon \)-optimal solution to \( (P_\epsilon) \) upon termination, according to Definition \ref{def:sub-opt-penalty}. However, as shown in Figure \ref{fig:ratio_no_noise}, the solutions obtained for different values of \( r \) are significantly different in terms of the objective function values. This observation aligns with the ill-conditioned nature of the problem, which has many optimal solutions, and suggests that the objective contour may be very flat, meaning the gradient is relatively small, even when the solution is far from optimal.

    This result is consistent with the theoretical analysis, particularly Proposition \ref{prop:opt-gap}. The optimality gap of the \( \epsilon \)-optimal solution to \( (P_{\epsilon}) \) in terms of the original problem depends on the distance from the current solution to an optimal solution (see \eqref{eq:opt-bound:1}). Therefore, if the problem is ill-conditioned, making the $\epsilon$-optimal solution to $(P_\epsilon)$ far from \( u^* \), we may still encounter a larger optimality gap with respect to the original problem. In such cases, a small stepsize can hinder the iterate from escaping the suboptimal region because the gradient can be quite small, even when the solution is far from optimal.
    This demonstrates that rapidly reducing the step size \( \eta_k \) with smaller \( r \) can cause the algorithm to stagnate in a flat, suboptimal region due to the problem's inherently ill-conditioned nature. 
    
    To avoid this issue, we maintain \( r = 1 \) for a fixed number of initial iterations and then gradually decrease the step size, e.g., at a rate of \( r = 0.9 \), to ensure constraint satisfaction. The results are \emph{remarkable}, showing that \texttt{FIRM} converges at the same rate as \texttt{FedPGD} when the step size is kept constant in the first phase, and in the second phase, it rapidly meets \texttt{FIRM}'s termination criterion, which will be detailed in the subsequent section.
    
Additionally, when noise is present in the dataset, Figure \ref{fig:ratio_noise} illustrates that solutions with smaller objective values deviate more from the true image. Consequently, reducing the optimality gap does not improve the outcome due to data noise. This observation motivates a revision of the termination criterion for noisy datasets, which we will discuss in a later section.


\subsubsection{Faster computation and constraint satisfaction} \label{sec:firm_fedpgd}
\texttt{FIRM} replaces the projection step \eqref{pgd_2} of \texttt{FedPGD} with a set of vector operations as in \eqref{our_approach}.
To numerically demonstrate the computational efficiency of \texttt{FIRM} compared to \texttt{FedPGD}, we apply \texttt{FedPGD} to solve \eqref{basic_model} on the same datasets. For \texttt{FedPGD}, we use a constant step size of $\alpha \gets 3/(4 \lambda_{\max}(A^{\top}A))$ 
and employ a Gurobi solver to perform the projection step \eqref{pgd_2}. The termination criterion is set with a tolerance of $\epsilon=10^{-2}$, stopping the algorithm when $\|w^{t} - w^{t-1}\| \leq \epsilon$.
For \texttt{FIRM}, we initialize the step size as $\eta_1 = \alpha$ and keep $r = 1$ (i.e., constant step size) for the first $10^4$ iterations, motivated by the discussion in \S\ref{sec:effect-r}. Subsequently, we set $r=0.9$ to gradually decrease the step size (i.e., increasing penalty parameter) to ensure constraint satisfaction.
 
\begin{figure}[t!]
\centering
\begin{subfigure}[b]{\textwidth}
\centering
\includegraphics[width=0.24\textwidth]{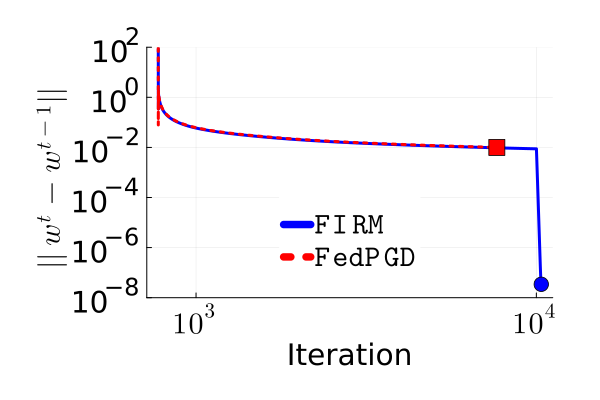}  
\includegraphics[width=0.24\textwidth]{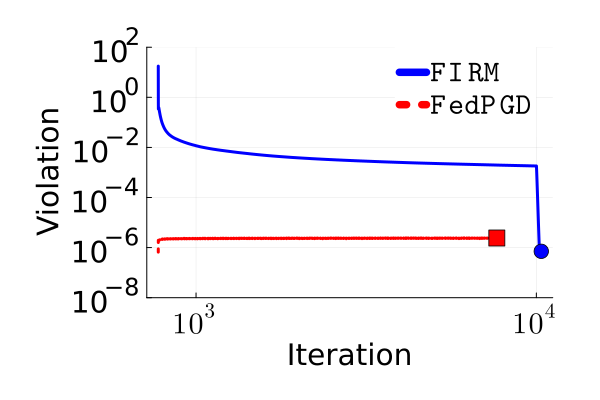}   
\includegraphics[width=0.24\textwidth]{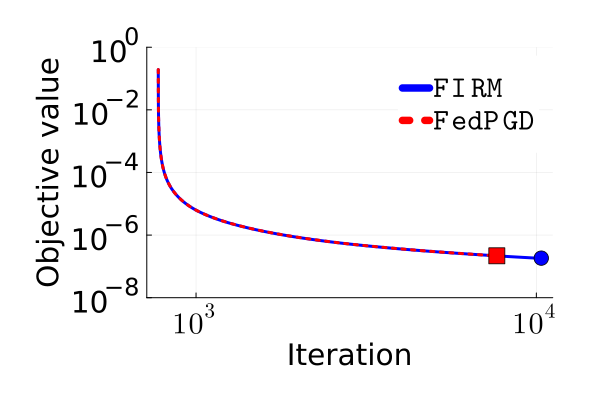}  
\includegraphics[width=0.24\textwidth]{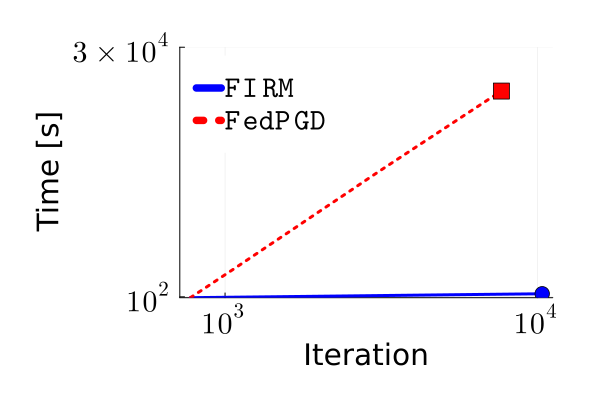}  
\caption{$s=0$}  
\label{fig:FIRM_FedPGD_0}
\end{subfigure} 
\hspace{5mm}
\begin{subfigure}[b]{\textwidth}
\centering
\includegraphics[width=0.24\textwidth]{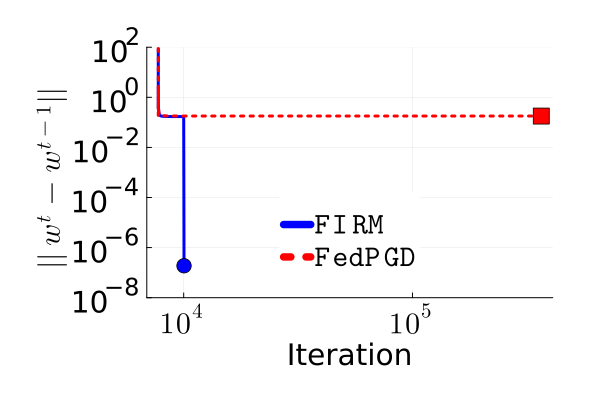}  
\includegraphics[width=0.24\textwidth]{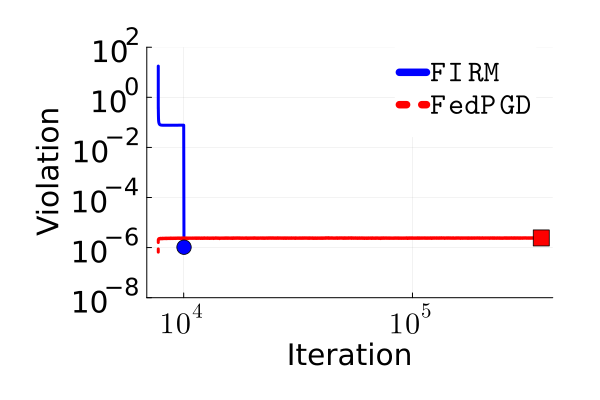}   
\includegraphics[width=0.24\textwidth]{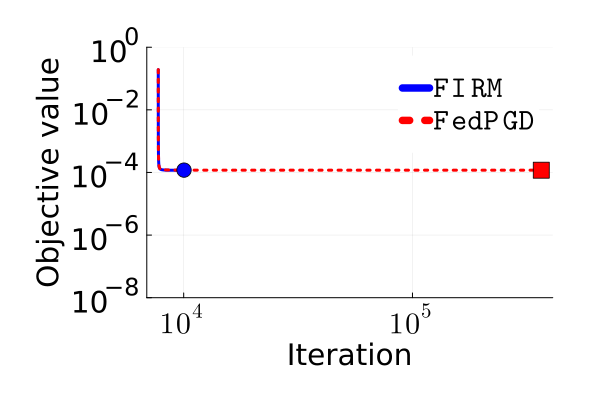}  
\includegraphics[width=0.24\textwidth]{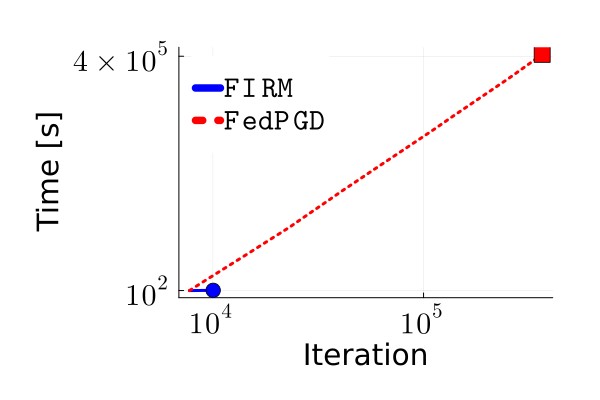}  
\caption{$s=0.01$} 
\label{fig:FIRM_FedPGD_1}
\end{subfigure}
\caption{The objective function value $f(w^{t}$), the distance between the consecutive iterates $\|w^t -w^{t-1}\|$, constraint violation $\|Dw^{t}\|$, and the elapsed time in seconds.}
\label{fig:FIRM_FedPGD}
\end{figure}


Figure \ref{fig:FIRM_FedPGD} 
compares \texttt{FIRM} and \texttt{FedPGD} in terms of the objective function value $f(w^{t}$), the distance between the consecutive iterates $\|w^t -w^{t-1}\|$, constraint violation $\|Dw^{t}\|$, and the elapsed time (in seconds) up to iteration $t \in \mathbb{Z}_{\geq 0}$. For \texttt{FIRM}, all inner-loop iterates are appended throughout the entire algorithm run. When $s = 0$, \texttt{FedPGD} terminates at iteration $8955$ while \texttt{FIRM} terminates at iteration $10130$ as shown in the first row of Figure \ref{fig:FIRM_FedPGD_0}. Notably, the convergence behavior of \texttt{FIRM} resembles that of \texttt{FedPGD} when $r=1$ (i.e., when both use the same step size). 
When $t > 10^4$, $\eta_k$ diminishes with $r=0.9$, which reduces (i) $\|w^t - w^{t-1}\|$ (see \eqref{bound_iterates} and the first row of Figure \ref{fig:FIRM_FedPGD_0}); 
and (ii) the constraint violation $\|Dw^t\|$ (see \eqref{eq:conv:2} and the second row of Figure \ref{fig:FIRM_FedPGD_0}).
While the objective function values generated by both algorithms are close to each other for every iteration $t \in \mathbb{Z}_{\geq 0}$, the computational performance gap is significant.
The elapsed computation times for \texttt{FedPGD} and \texttt{FIRM} are $24753$ and $469$ seconds, respectively, leading to approximately $52$ times faster computation. 

Second, when $s = 0.01$, \texttt{FedPGD} fails to terminate within several days, while \texttt{FIRM} successfully terminates, despite both methods achieving similar objective function values and constraint violations. This discrepancy arises because, after a certain number of iterations, the distance between consecutive iterates in \texttt{FedPGD} shows minimal improvement, failing to satisfy the termination criterion $\|w^t-w^{t-1}\| \le \epsilon = 0.01$, as shown in the first row of Figure \ref{fig:FIRM_FedPGD_1}, even though a fairly good objective function value has been reached. We attribute this behavior to the ill-conditioned nature of the problem with noise. In contrast, \texttt{FIRM} successfully terminates with a feasible solution in a much shorter computation time.

\subsection{Effect of multimodality} 
In this section, we numerically demonstrate the effectiveness of multimodality constraints in improving the quality of the reconstructed images. Specifically, we compare the solution produced by \eqref{basic_model} with those obtained from the individual image reconstruction (IIR) model for each 10 samples, \({\mathcal{B}}^1_{|\Theta|,s},\cdots, {\mathcal{B}}^{10}_{|\Theta|,s}\), of the random dataset \(\widetilde{\mathcal{B}}_{|\Theta|,s}\). Specifically, for $\ell \in [10],$ we let \(\widehat{w}^{|\Theta|,s,\ell}\) denote the solution of \eqref{basic_model} for ${\mathcal{B}}^\ell_{|\Theta|,s}$, and the solution of the IIR model is given by 

{\footnotesize\begin{align}
\widehat{w}^{\text{\tiny IIR}, |\Theta|, s,\ell}_i \gets \argmin_{w \in \mathbb{R}^n_+} f_i^{|\Theta|,s}(w) := \| A w - b_i \|^2, \ \forall i \in [N],\ell \in [10], b_i \in \mathcal B^\ell_{|\Theta|,s}.\label{model_individual}
\end{align}}

We use the projected gradient method, denoted by \texttt{PGD}, to solve the IIR model, which outperforms the other solution approaches in both computational efficiency and solution quality for the datasets (see Appendix \ref{apx:iir} for more details). When noise is present in the data (i.e., $s > 0$), the termination criteria used in \S\ref{sec:firm_fedpgd} are no longer effective in finding the ground-truth, as shown in Figure \ref{fig:ratio_noise}.
Therefore, we apply the \textit{discrepancy principle}, a widely used posteriori-stopping rule \cite{morozov1966solution} for cases where noise is present in the dataset.
Specifically, we set the termination criterion as:

{\footnotesize\begin{align}
& \| A w - b_i\| \leq \max \{b_{i}\} \sqrt{|\Theta||\mathcal T|} s, \ \forall i \in [N],
\end{align}}
where $\max \{b \}$ represents the largest element of the vector $b$. With this criterion, algorithms are terminated before reaching the optimal solution for the noisy model, so the choice of the step size may affect the quality of the solutions $\widehat{w}$, measured by $\| \widehat{w} - w^{\texttt{true}} \|$. 
To evaluate the effectiveness of multimodality across different settings, we experiment with varying step sizes. For \texttt{PGD}, we use a constant step size $\gamma \alpha$, where $\alpha \gets 3/(4 \lambda_{\max}(A^{\top}A))$ and $\gamma \in \Gamma:=\{0.02, 0.05, 0.1, 1\}$, to assess the impact of small and large step sizes. For \texttt{FIRM}, we use the same step size rule as in \S\ref{sec:firm_fedpgd} where the initial step size is set as $\eta_1 \gets \gamma \alpha$.
For the noise-free dataset (i.e., $s=0$), we use the termination criteria in \S\ref{sec:firm_fedpgd} with $\gamma = 1$.
 
\begin{figure}[!tbp]
\centering
\begin{subfigure}[b]{0.25\textwidth}
\centering     
\includegraphics[width=\textwidth]{./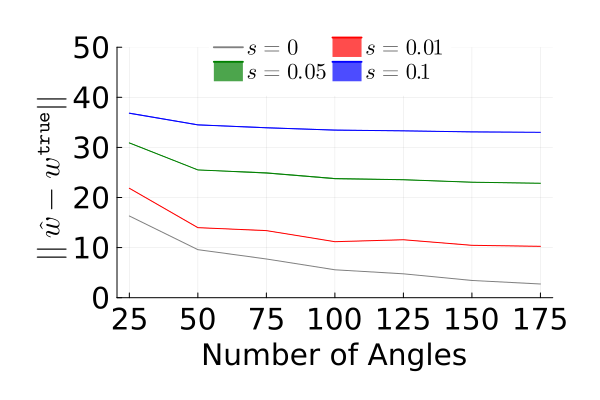}  
\includegraphics[width=\textwidth]{./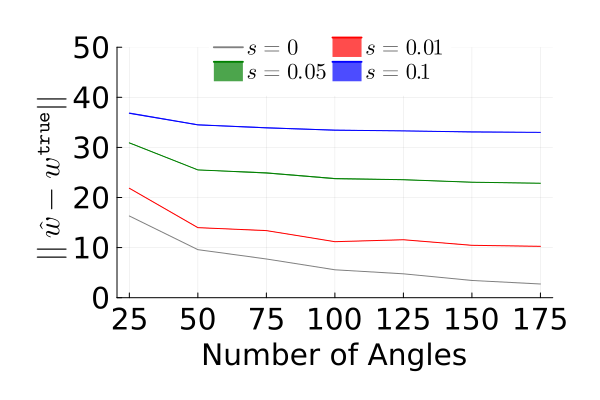}  
\includegraphics[width=\textwidth]{./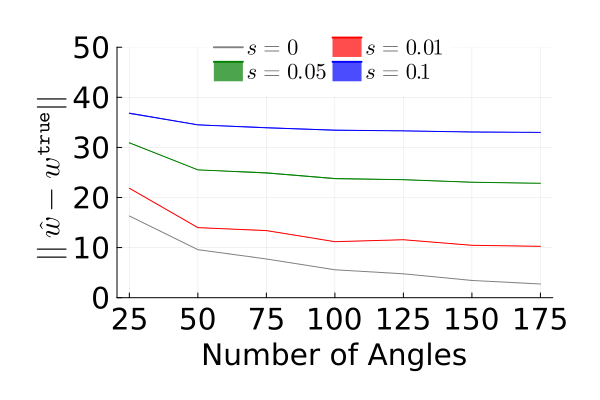}  
\includegraphics[width=\textwidth]{./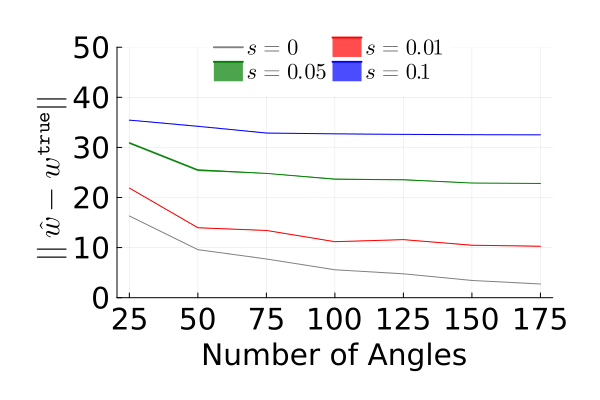}  
\caption{\eqref{basic_model} solved by \texttt{FIRM}}  
\label{fig:firm_relax_1} 
\end{subfigure}
\begin{subfigure}[b]{0.25\textwidth}
\centering     
\includegraphics[width=\textwidth]{./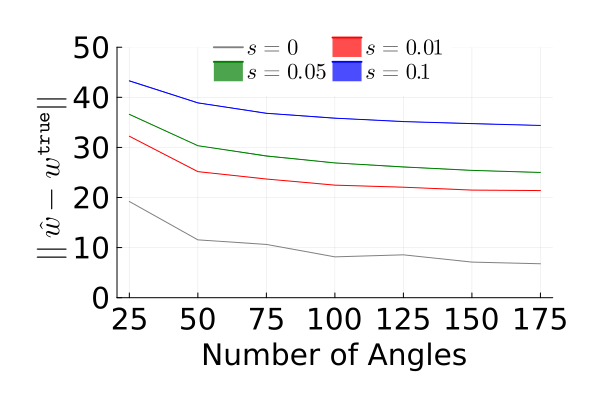}
\includegraphics[width=\textwidth]{./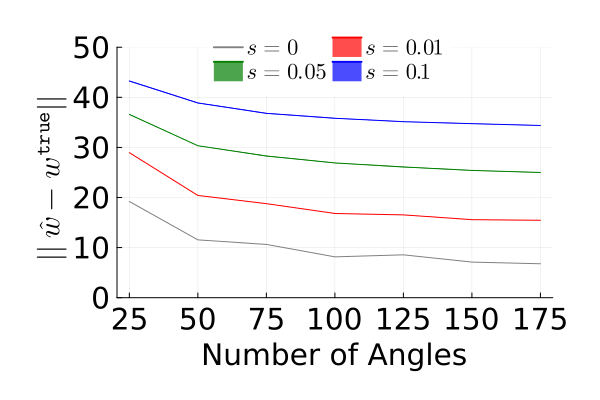}
\includegraphics[width=\textwidth]{./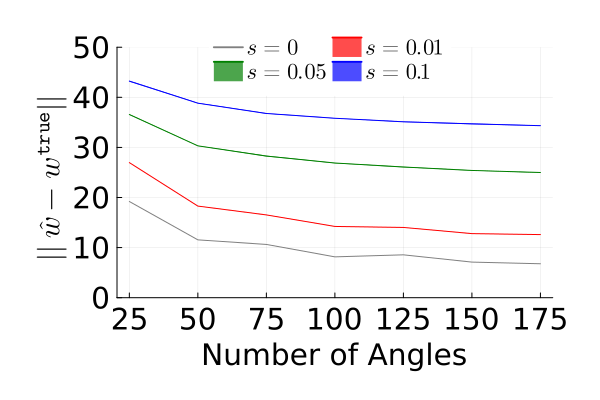}
\includegraphics[width=\textwidth]{./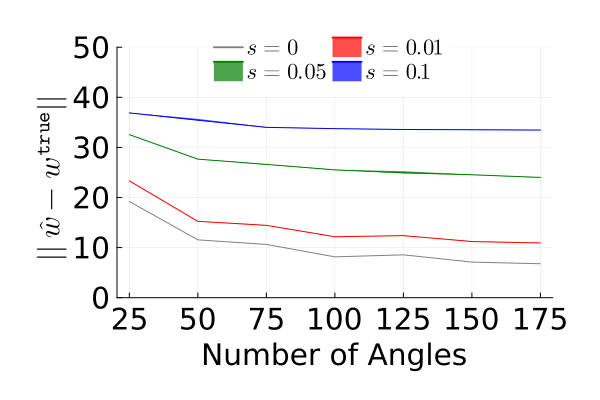}
\caption{IIR solved by \texttt{PGD}}  
\label{fig:firm_relax_2} 
\end{subfigure} 
\begin{subfigure}[b]{0.25\textwidth}
\centering     
\includegraphics[width=\textwidth]{./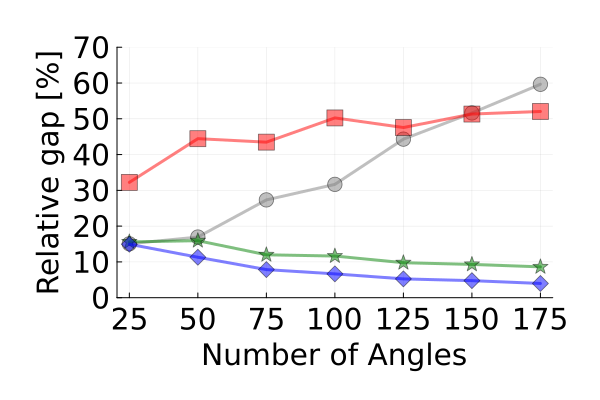}
\includegraphics[width=\textwidth]{./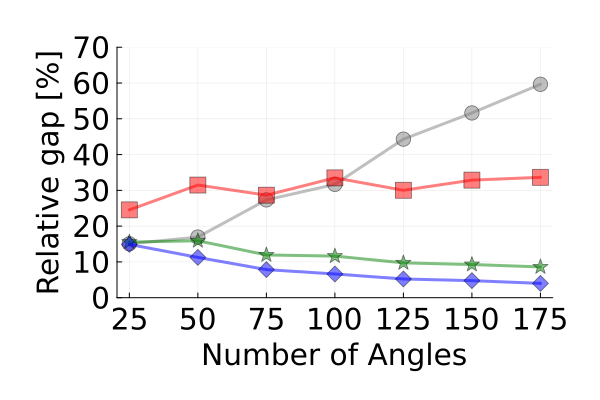}
\includegraphics[width=\textwidth]{./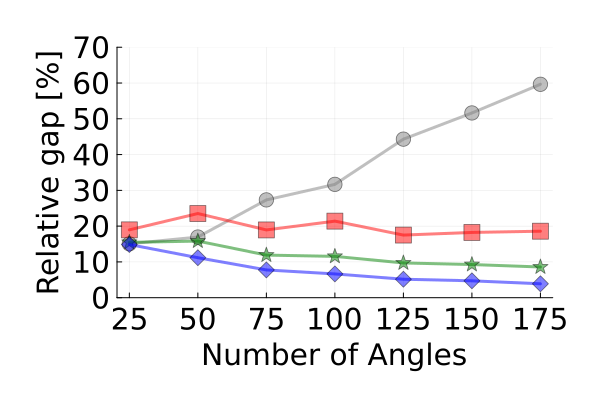}
\includegraphics[width=\textwidth]{./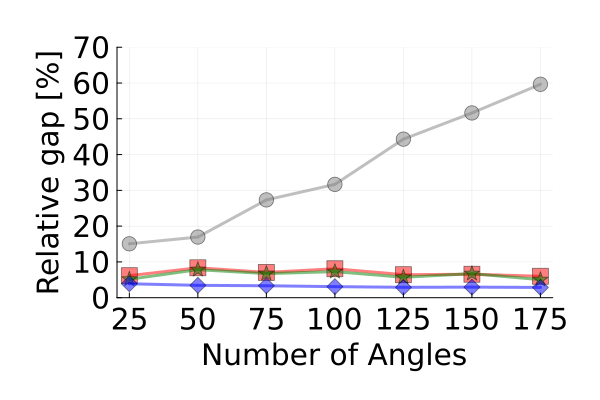}
\caption{Relative gap} 
\label{fig:firm_relax_3} 
\end{subfigure} 
\caption{Comparison of the quality of $\widehat{w}$, measured by $\|\widehat{w} - w^{\texttt{true}} \|$, where $\widehat{w}$ in (a) and (b) are obtained by solving \eqref{basic_model} via \texttt{FIRM}, and the IIR model via \texttt{PGD}, respectively. The relative gaps in (c) are computed as in \eqref{rel_gap}. Each row corresponds to a specific stepsize $\gamma \alpha$, where $\gamma$ ranges from $0.02$ (first row) to $1.0$ (last row).
}
\label{fig:firm_relax} 
\end{figure}

To compare the quality of images $\widehat{w}^{|\Theta|,s,\ell}$ reconstructed from \eqref{basic_model} with $\widehat{w}^{\text{IIR},|\Theta|,s,\ell}$ from the IIR model for $|\Theta| \in \mathcal{N}_{\theta}$, $s \in \mathcal{S}$, and $\ell \in [10]$ obtained for each stepsize scale factor $\gamma \in \Gamma$, each row of Figure \ref{fig:firm_relax} shows the distance from the reconstructed images to the ground-truth, $\|\widehat{w}-\widehat{w}^{\texttt{true}} \|$, along with the relative gap in percentage for the solutions obtained using the stepsize scale factors $\gamma \in \Gamma$ in the given order. The relative gap is computed as

{\footnotesize\begin{align}
100 \times \frac{ \| \widehat{w}^{\texttt{IIR},|\Theta|,s,\ell} - {w}^{\texttt{true}}\| - \| \widehat{w}^{|\Theta|,s,\ell} - {w}^{\texttt{true}}\|}{\| \widehat{w}^{\texttt{IIR},|\Theta|,s,\ell} - {w}^{\texttt{true}}\|}, \ \forall |\Theta| \in \mathcal{N}_{\theta}, s \in \mathcal{S}, \ell \in [10], \label{rel_gap}
\end{align}}
{\color{black}We note that for the noise-free dataset (i.e., $s=0$), the initial step size is fixed at $\eta_1 \gets \alpha$, therefore the results in Figure \ref{fig:firm_relax} remain consistent across the rows. 
}
On the other hand, {\color{black}for the noisy cases (i.e., $s >0$), we report the 20th, 80th percentiles, as well as the average of $\| \widehat{w} - \widehat{w}^{\texttt{true}}\|$ across $10$ independent trials for each $|\Theta| \in \mathcal{N}_{\theta}$. 
We observe that the variance of the results is small. 
}

Figures \ref{fig:firm_relax_1} and \ref{fig:firm_relax_2} demonstrate that for both \(\widehat{w}^{|\Theta|,s,\ell}\) and \(\widehat{w}^{\texttt{IIR},|\Theta|,s,\ell}\), the quality of the images improves as more data becomes available (i.e., as \(|\Theta|\) increases) and degrades as the noise level \(s\) increases, for across all stepsize scale factors \(\gamma \in \Gamma\). Additionally, Figure \ref{fig:firm_relax_3} shows that, for all noise levels and stepsizes tested, \(\widehat{w}^{|\Theta|,s,\ell}\) consistently yields better image quality than \(\widehat{w}^{\texttt{IIR},|\Theta|,s,\ell}\). This benefit is particularly noticeable for the noiseless dataset, highlighting the effectiveness of incorporating multimodality to address the multiple optimal solution issue by leveraging noiseless data points. As the noise level \(s\) increases, the relative gap tends to decrease, although \(\widehat{w}^{|\Theta|,s,\ell}\) remains of better quality, but with a lower relative gap. This suggests that while high noise levels may reduce the impact of the additional information from the multimodality constraint, \(\widehat{w}^{|\Theta|,s,\ell}\) still provides better image quality than the individual model.
Another noteworthy point is that the quality of \(\widehat{w}^{|\Theta|,s,\ell}\) is less affected by small stepsizes than \(\widehat{w}^{\texttt{IIR},|\Theta|,s,\ell}\). Specifically, as the stepsize decreases, the change in image quality is more pronounced in \(\widehat{w}^{\texttt{IIR},|\Theta|,s,\ell}\). This suggests that the solution to \eqref{basic_model} obtained by \texttt{FIRM} exhibits better stability compared to the solution from the IIR model obtained by \texttt{PGD}.

\section{Conclusions}
\label{sec:conclusions}
We proposed \texttt{FIRM}, a federated algorithm that solves a joint inverse constrained optimization problem to reconstruct images from multimodal tomographic datasets generated by multiple agents at different locations, without the need for data centralization. By replacing the costly projection step in \texttt{FedPGD} with simple vector operations, \texttt{FIRM} achieves substantial computational improvements, making it well-suited for federated environments where agents perform local gradient computations on their own data and the server conducts only lightweight computations. We establish a connection between \texttt{FIRM} and the classical inexact QP method and introduce an adaptive step-size rule that guarantees sublinear convergence to an optimal solution at a rate of \(\mathcal{O}(1/\epsilon^2)\), with a condition that improves the rate to \(\mathcal{O}(1/\epsilon)\), matching the rate of \texttt{FedPGD} while requiring significantly less cental computation.

Numerical experiments show that \texttt{FIRM} achieves approximately 52 times faster computational speed compared to \texttt{FedPGD}, while maintaining similar solution quality for the noiseless dataset. For noisy data, \texttt{FIRM} demonstrates its stability by gracefully terminating with a feasible solution, while \texttt{FedPGD} fails to do so even after several days of computation. Additionally, we validate the effectiveness of multimodality in tomographic image reconstruction, showing that \texttt{FIRM} consistently produces higher-quality images than unimodal models across all scenarios, even with fewer or noisier samples, and is less sensitive to variations in the choice of initial step sizes.

In future work, we plan to extend \texttt{FIRM} to low-rank image reconstruction using tensor decomposition, aiming to improve reconstruction quality in ill-posed tomographic problems. Moreover, the connection to first-order QP and AL methods offers opportunities for algorithmic enhancements \cite{lan2013iteration, lan2016iteration, necoara2019complexity, xu2021first, lu2023iteration}. Potential directions include extending \texttt{FIRM} to solve perturbed problems with improved convergence rates, leveraging optimal Nesterov’s method, or developing more effective penalty parameter update rules for the proposed vanilla augmented Lagrangian counterpart. These directions represent exciting prospects for advancing both theory and applications.

\bibliographystyle{siamplain}
\bibliography{references}

\newpage
\appendix

\section{Proof of Lemma \ref{lemm:descent}}
\label{appendix:proof-descent-lemm}
Note that from the smoothness of $Q$, we have
{\begin{align*}
Q(w') & \le Q(w)+\langle \nabla Q(w),w'-w \rangle + \frac{L}{2}\|w'-w\|^2\\
& \le Q(w)+\langle \nabla Q(w),w'-w \rangle + \frac{3}{4\eta}\|w'-w\|^2, \mbox{ since $\eta \le \frac{3}{2L}$}\\
&=Q(w) - \frac{1}{4\eta} \|w'-w\|^2 +\frac{1}{\eta}\langle w-w'-\eta \nabla Q(w), w-w' \rangle , 
\end{align*}}
where the last equality is derived by adding and subtracting $\frac{1}{\eta}\|w'-w\|^2$. \\
If $\langle w-w'-\eta\nabla Q(w), w-w' \rangle \le 0$, the desired result follows. Note that 
{\begin{align*}
 \langle w-w'-\eta\nabla Q(w), w-w' \rangle  =&\left\langle \underbrace{w -\eta \nabla Q(w)}_{=:y}-(w- \eta\nabla Q(w))_+, w - (w- \eta\nabla Q(w) )_+\right\rangle\\
=&\left\langle y - y_+, w - y_+\right\rangle \le  0,
\end{align*}}
where the last inequality holds due to the projection theorem.

\section{Proof of Lemma \ref{lemm:monotone}}\label{proof:monotone}
Let $H$ be the Hessian of the quadratic function $Q$. Since $Q$ is $L$-smooth, we have $\lambda_{\max}(H) \le L$. Since the projection onto a closed convex set is 1-Lipschitz, we have 
{\begin{align*}
\|w_{++} - w_+ \|^2 & \le \|w_{+} - \eta\nabla Q(w_+) - (w - \eta\nabla Q(w)) \|^2\\
& = \|w_{+}-w - \eta H (w_+ - w) \|^2 \mbox{ (since $Q$ is quadratic)}\\
& = \|(I-\eta H) (w_+ - w)\|^2\\
& \le \max\{|\lambda_{\min}(I-\eta H)|, |\lambda_{\max}(I-\eta H)|\}^2\|w_+ - w\|^2\\
& = \max\{|1-\eta \lambda_{\max}(H)|, |1-\eta \lambda_{\min}(H)|\}^2\|w_+ - w\|^2\\
& \le \|w_+ - w\|^2,
\end{align*}}
where the last inequality follows since $\eta \lambda_{\max}(H) \le \eta L \le 2$.

\section{Proof of Lemma \ref{lemm:tech}}
\label{appendix:tech}
(i) Note that the optimal objective value of the following problem gives the tight upper bound on $\|z-z^*\|$ for the case $|h(z) - h(z^*)| \le \delta$:
\begin{equation}\max \{\|z-z^*\|: |\|z\|^2 - \|z^*\|^2 | \le \delta\}.\label{eq:norm-sq:bound}\end{equation}
Note that 
{\begin{align*}
\sqrt{\delta + \|z^*\|^2} + \|z^*\| = \max_z \ & \|z\|+\|z^*\|: \|z\|^2 \le  \eta + \|z^*\|^2
\end{align*}}
is a relaxation to \eqref{eq:norm-sq:bound}, so the desired result follows. This upper bound is indeed tight since it is attained at $z = -\frac{\sqrt{\delta+\|z^*\|}}{\|z^*\|}z^*$. 

(ii) Since $x$ and $y$ are nonnegative, by squaring both sides of the inequality, we get
{\begin{align*}
\sqrt{x+y}+\sqrt{x} \le 2\sqrt{x+\frac{y}{2}}& \Leftrightarrow 2x+ y + 2\sqrt{x+y}\sqrt{x} \le 4(x+\frac{y}{2})\\
 & \Leftrightarrow 2\sqrt{x+y}\sqrt{x} \le 2x+y,
\end{align*}}
which holds by the arithmetic-geometric mean inequality applied to $x+y$ and $x$.

\section{Visualizations of the tomographic multimodal datasets} \label{apx:data}
We present visualizations of some tomographic multimodal datasets used for image reconstruction in our numerical experiments. 
In Figure \ref{fig:experimental_data}, we visualize $b_i \in \mathcal B_{|\Theta|,s}$ for all $i \in [N]$ with $(|\Theta|,s)=(25, 0)$ (first row), $(100,0)$ (second row), and $(100,0.1)$ (last row).
The images in the first row have lower resolutions than those in the second row due to a smaller number of angles. 
The images in the last row appear less clear than those in the second row because of the noise introduced.

\begin{figure}[h!]
\centering
\begin{subfigure}[b]{0.18\textwidth}
\centering    
\includegraphics[width=\textwidth,trim={2cm 0.5cm 2cm 1cm},clip]{./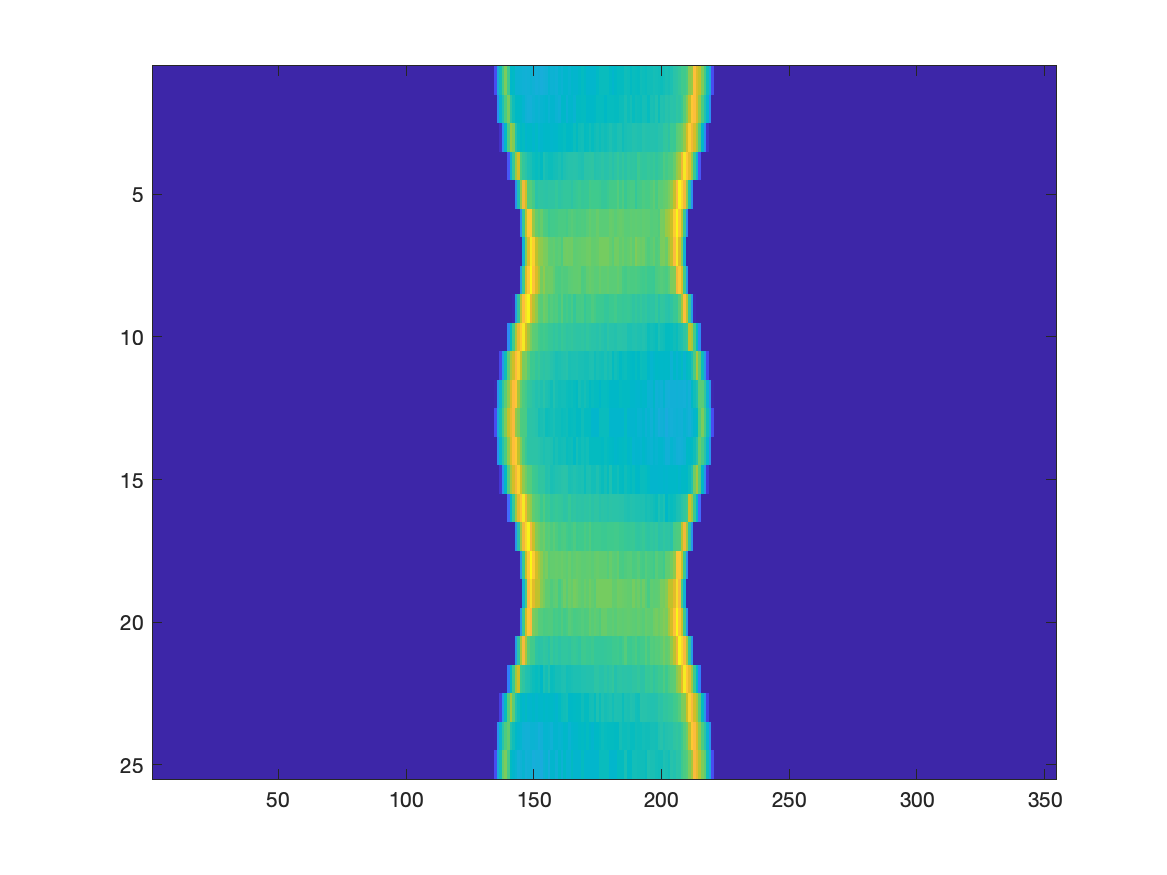}  
\includegraphics[width=\textwidth,trim={2cm 0.5cm 2cm 1cm},clip]{./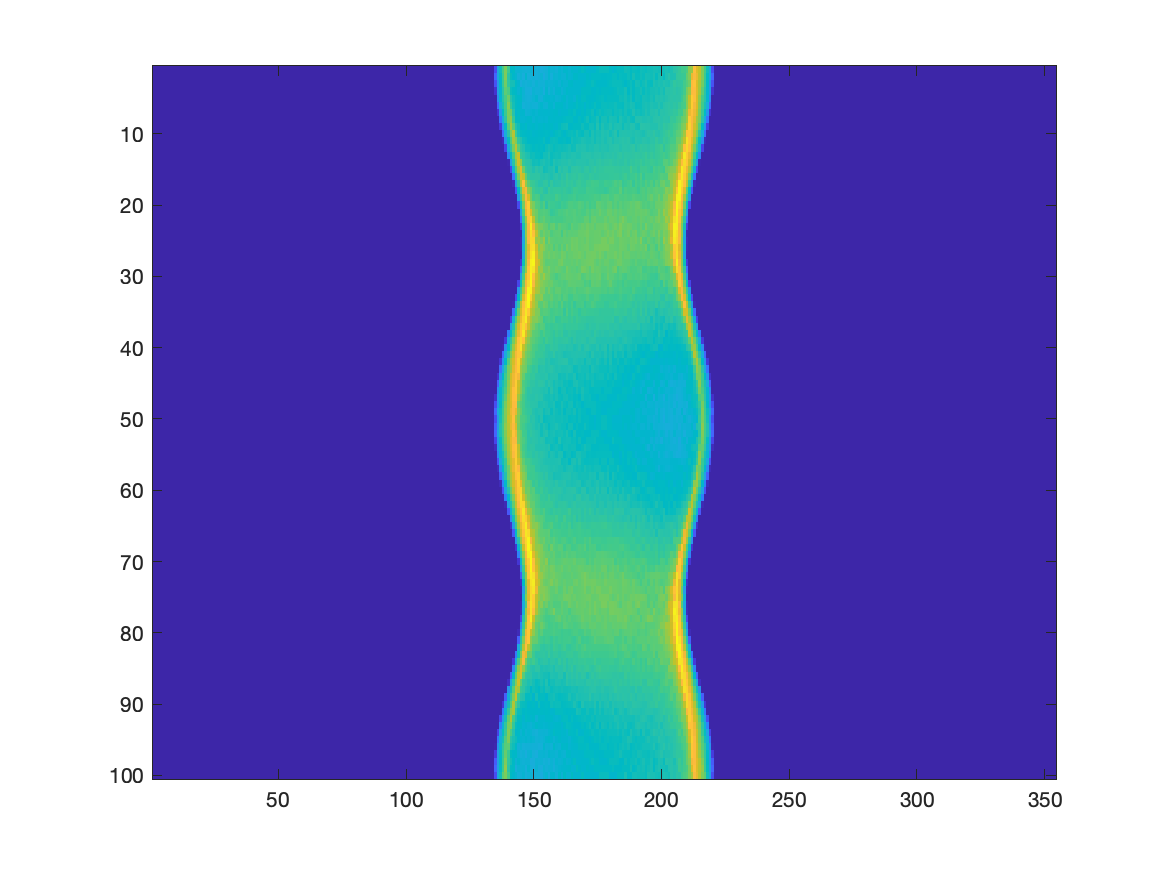}   
\includegraphics[width=\textwidth,trim={2cm 0.5cm 2cm 1cm},clip]{./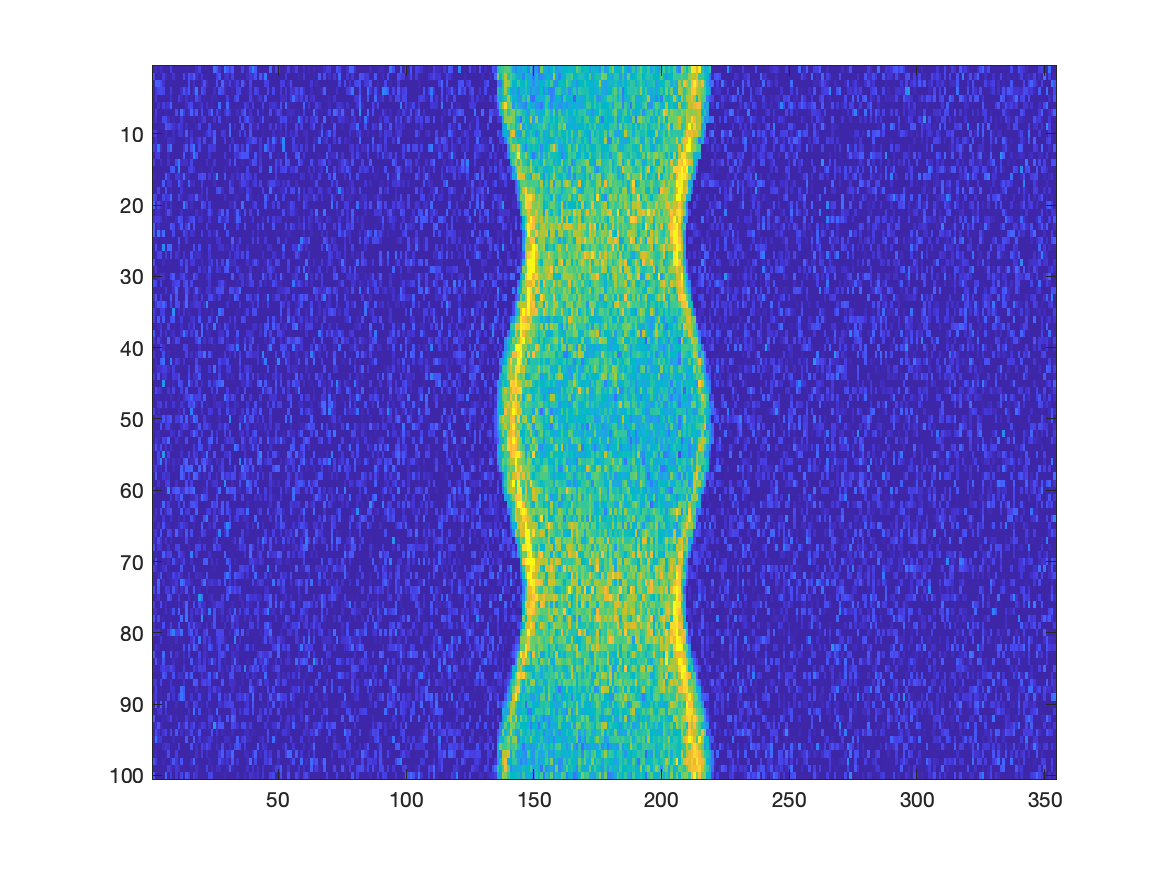}   
\caption{XRF-1} 
\end{subfigure}
\begin{subfigure}[b]{0.18\textwidth}
\centering    
\includegraphics[width=\textwidth,trim={2cm 0.5cm 2cm 1cm},clip]{./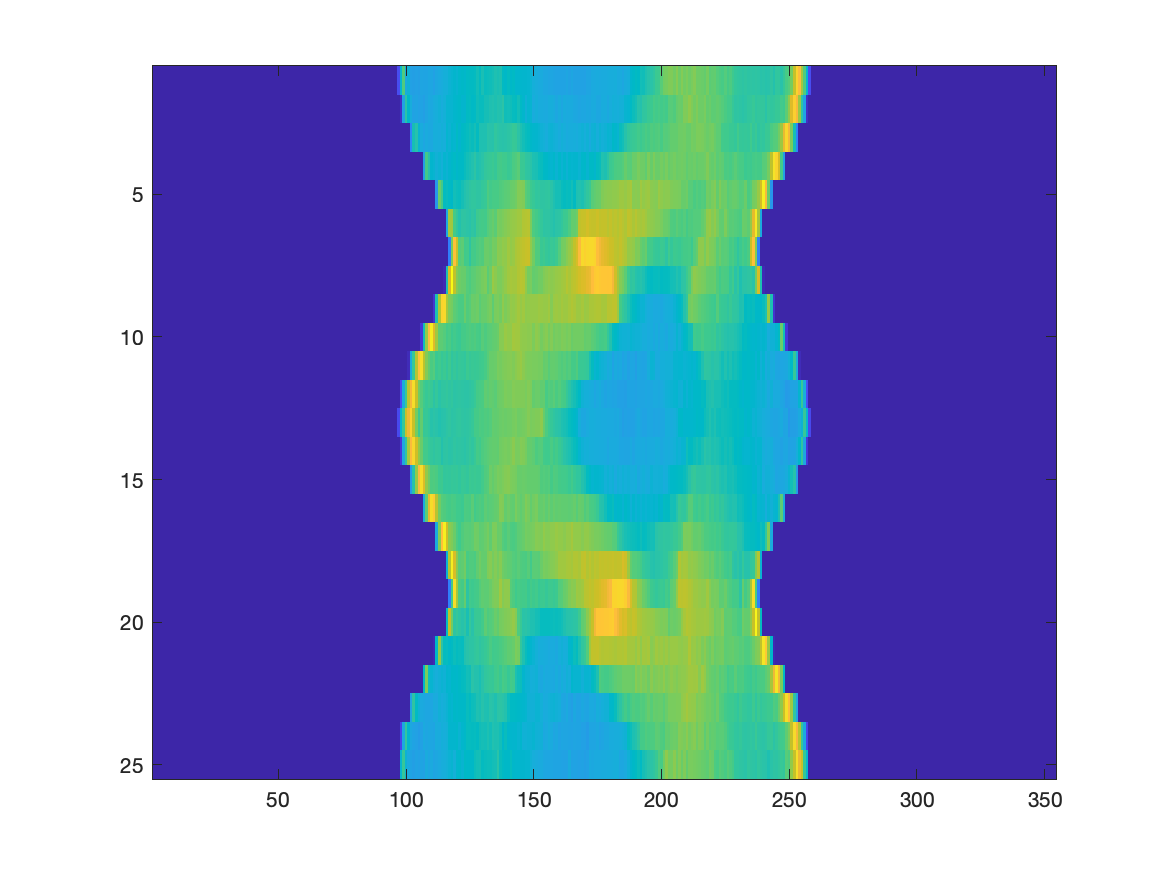}  
\includegraphics[width=\textwidth,trim={2cm 0.5cm 2cm 1cm},clip]{./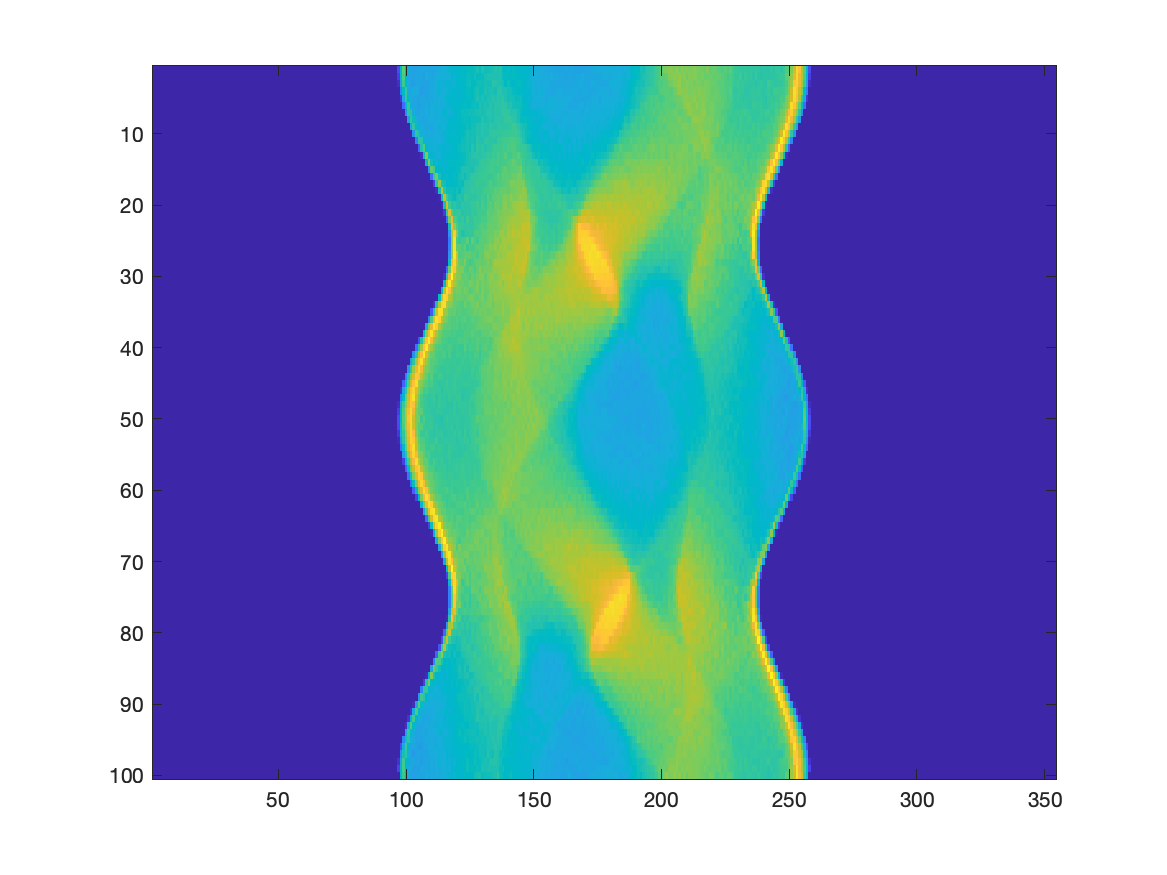}
\includegraphics[width=\textwidth,trim={2cm 0.5cm 2cm 1cm},clip]{./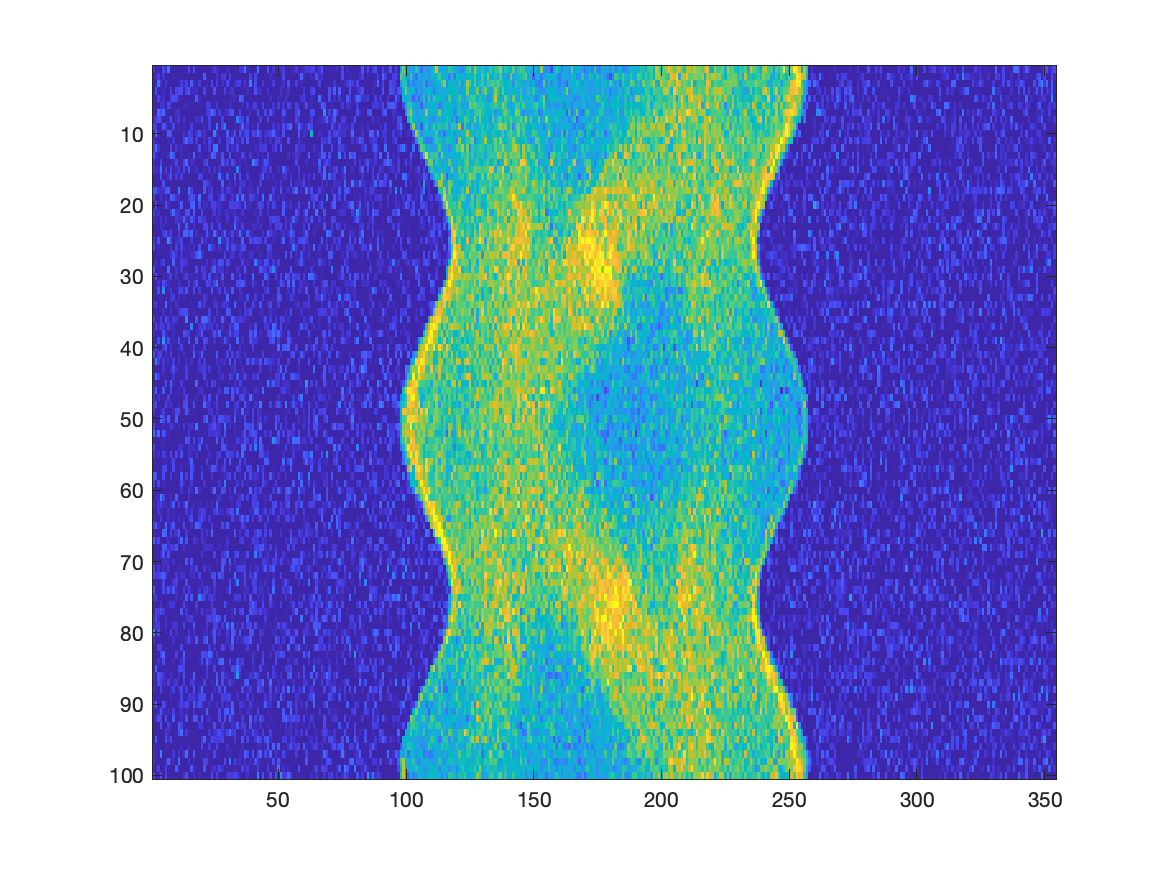}
\caption{XRF-2} 
\end{subfigure}
\begin{subfigure}[b]{0.18\textwidth}
\centering    
\includegraphics[width=\textwidth,trim={2cm 0.5cm 2cm 1cm},clip]{./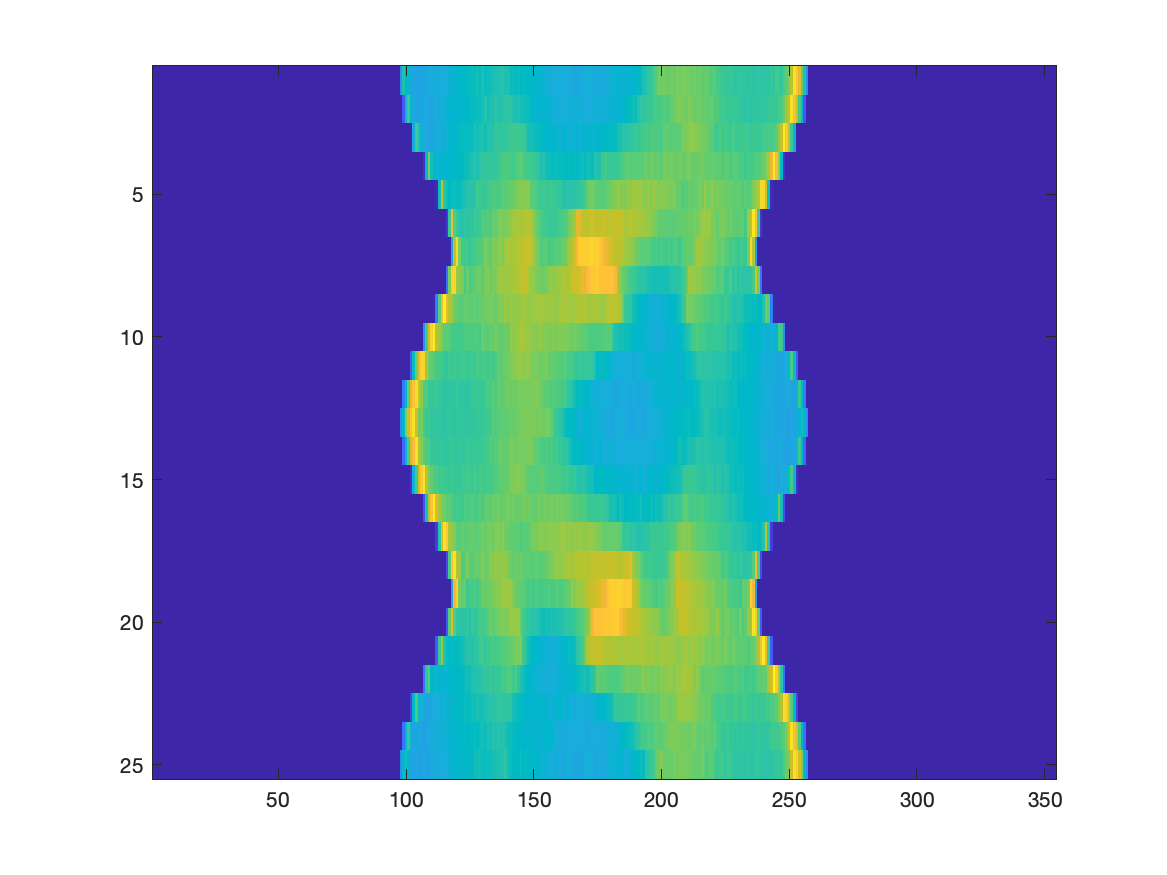}  
\includegraphics[width=\textwidth,trim={2cm 0.5cm 2cm 1cm},clip]{./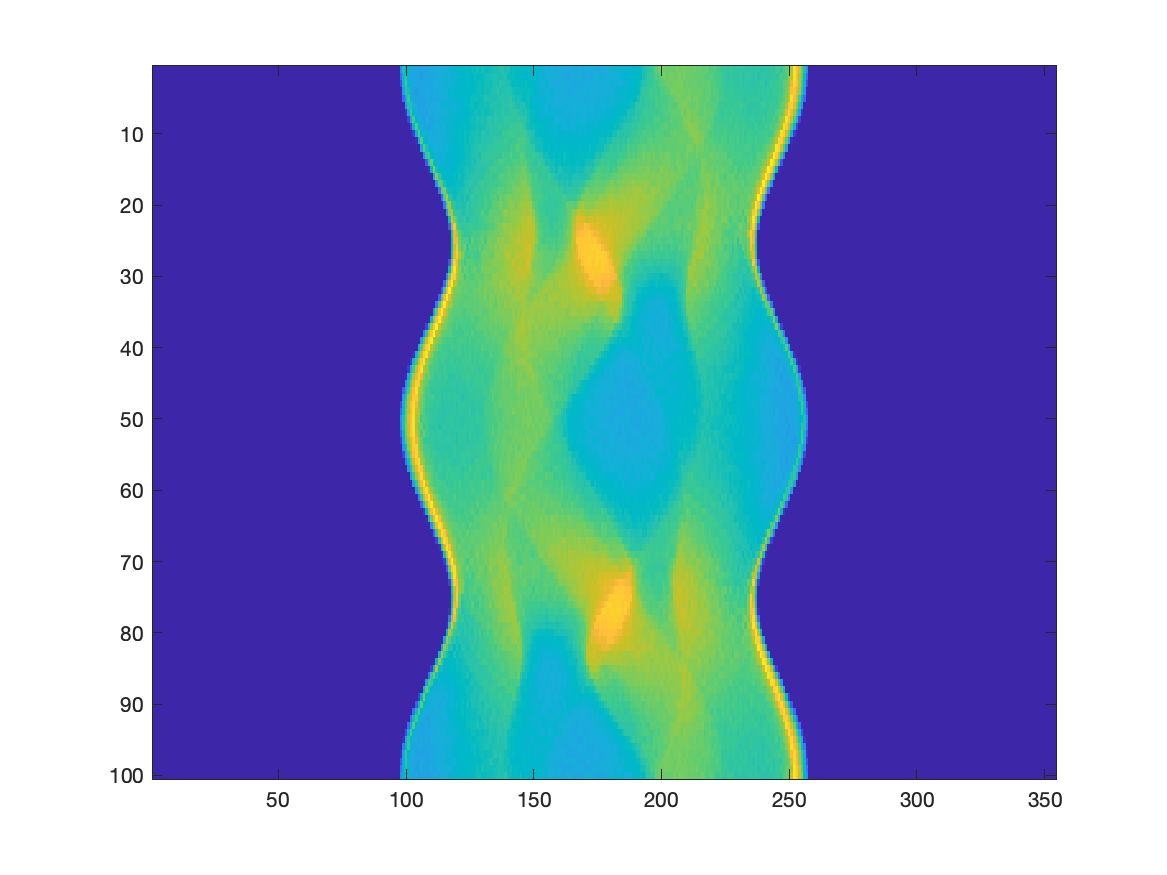}
\includegraphics[width=\textwidth,trim={2cm 0.5cm 2cm 1cm},clip]{./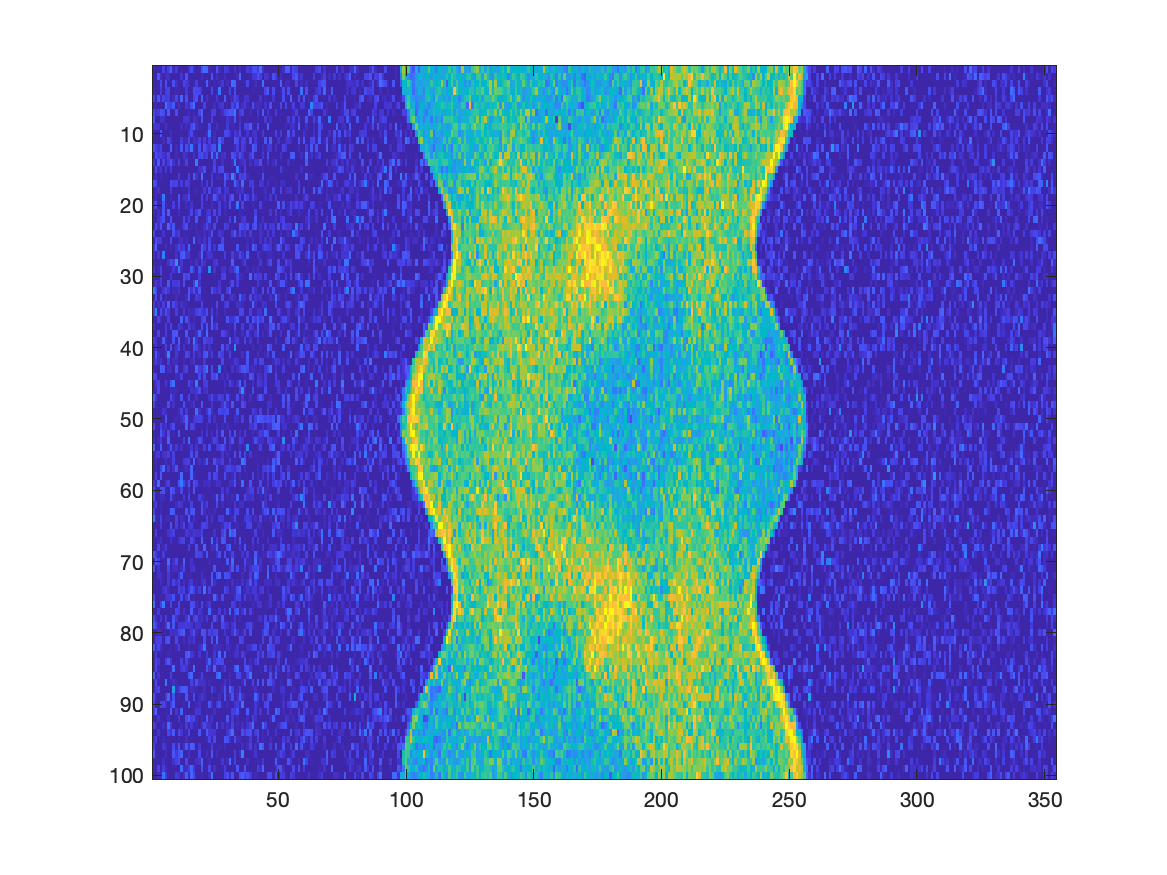}
\caption{XRF-3} 
\end{subfigure}
\begin{subfigure}[b]{0.18\textwidth}
\centering    
\includegraphics[width=\textwidth,trim={2cm 0.5cm 2cm 1cm},clip]{./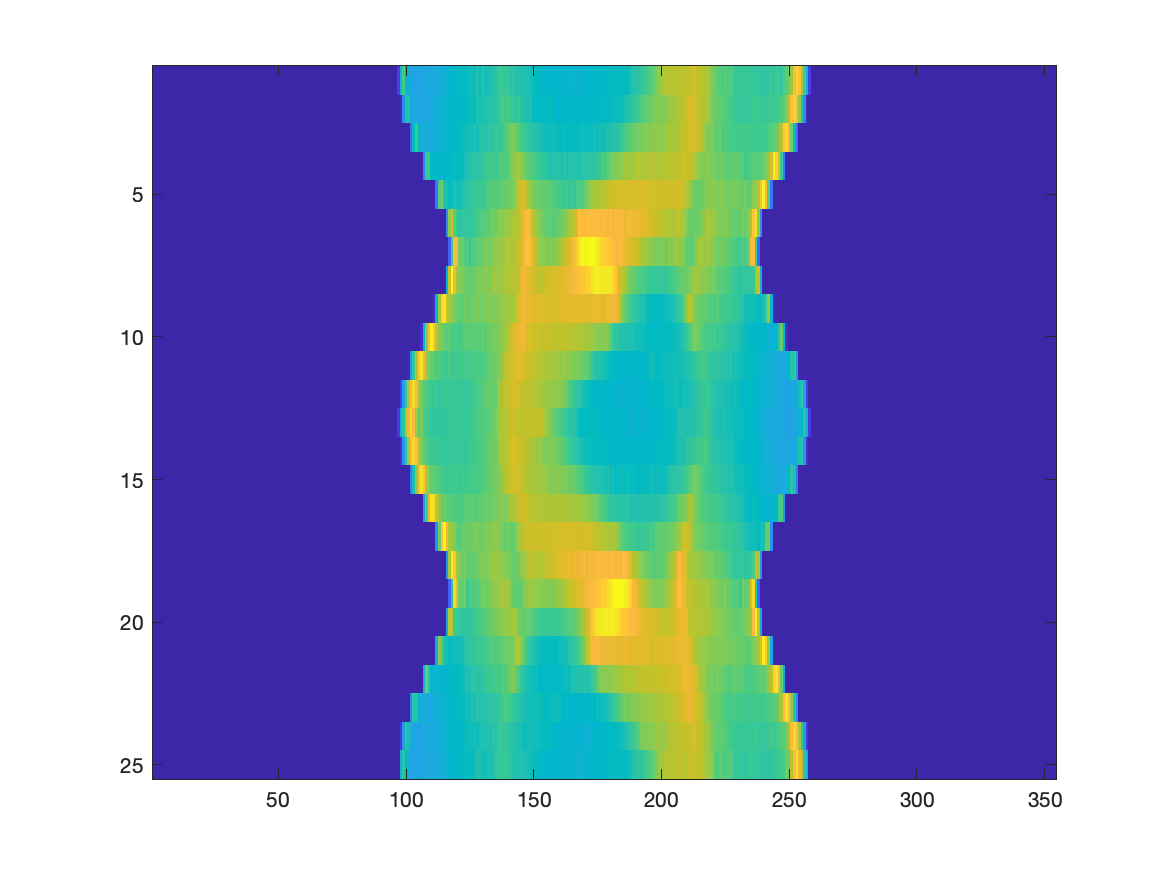}  
\includegraphics[width=\textwidth,trim={2cm 0.5cm 2cm 1cm},clip]{./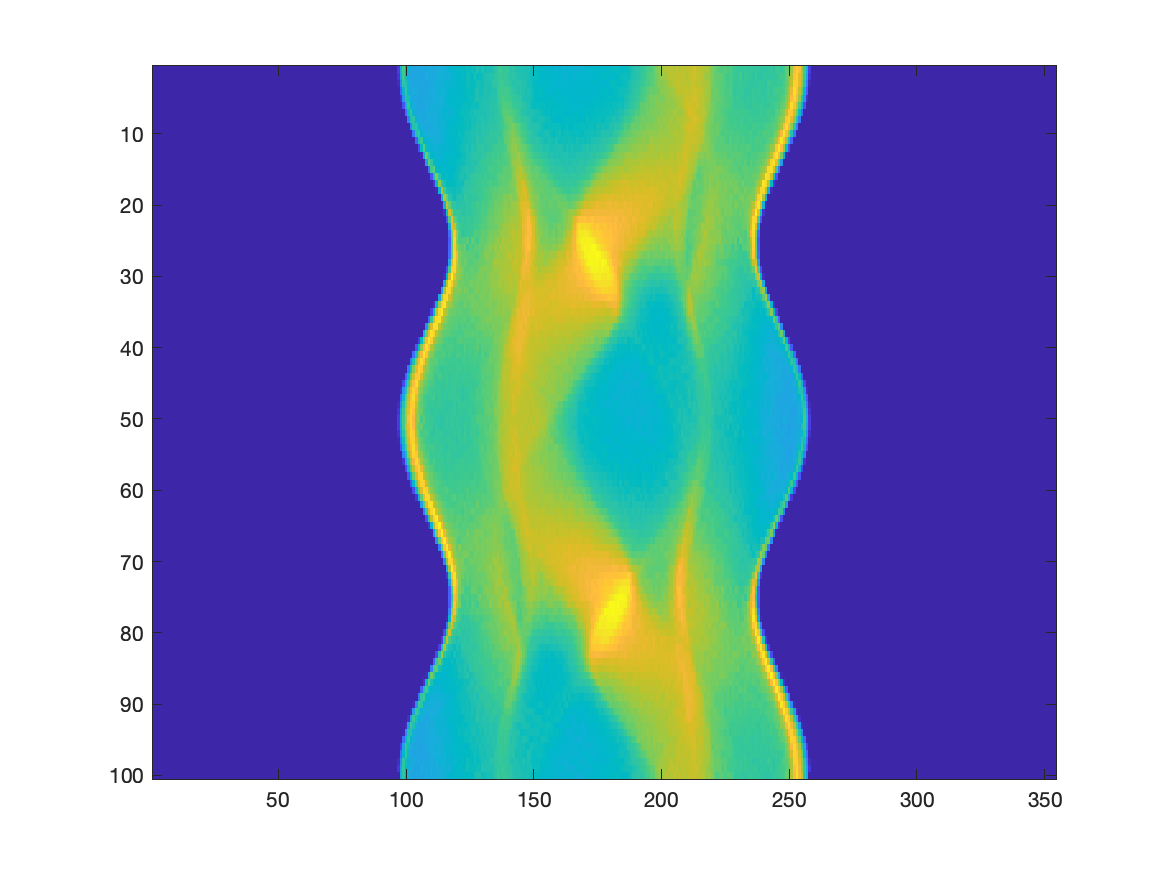}
\includegraphics[width=\textwidth,trim={2cm 0.5cm 2cm 1cm},clip]{./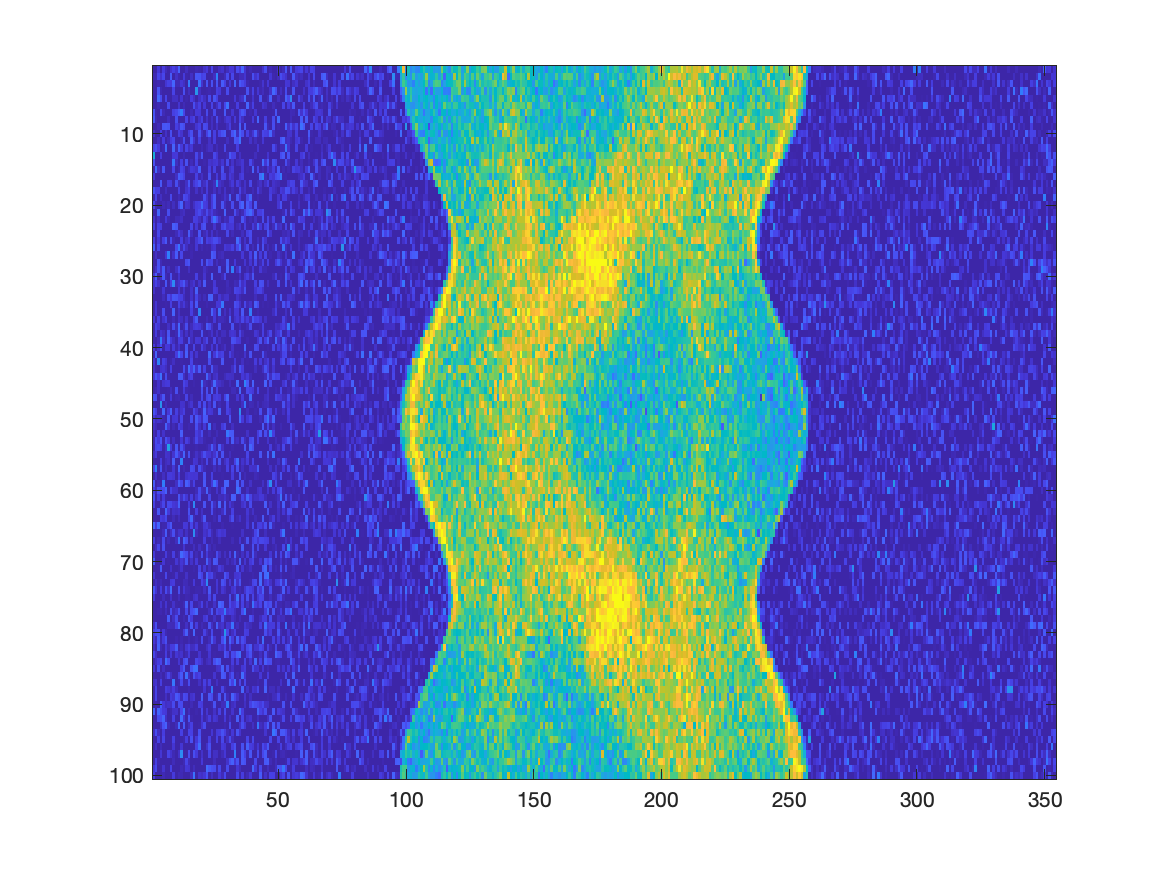}
\caption{XRT} 
\end{subfigure}
\caption{Visualizations of the tomographic multimodal datasets.}
\label{fig:experimental_data} 
\end{figure}

\section{Solution approaches for the IIR Model} \label{apx:iir}
In this section we compare the existing solution approaches for solving the IIR model \eqref{model_individual}, including \texttt{PGD} and the projected least square methods, denoted as \texttt{PLSQR}. First, for fixed $|\Theta| \in \mathcal{N}_{\theta}$ and $s \in \mathcal{S}$, and $i \in [N]$, one \texttt{PGD} step is as follow:
\begin{align}
w^{t+1} \gets   \Big( w^t - \alpha \nabla f_i^{|\Theta|,s} (w^t) \Big)_+,
\end{align}
where $\alpha > 0$ is a constant step size, and
$\nabla f^{|\Theta|,s}_i(w^t) = 2 A^{\top} (Aw^t - b_i )$ is the gradient of the function $f^{|\Theta|, s}_i$ at $w^t$.

On the other hand, \texttt{PLSQR} is the combination of the classical least square method (e.g., \texttt{lsqr} function in Matlab) and the projection, that is the solution from the \texttt{lsqr} is projected onto the non-negative orthant.
We remark that the non-negative least square methods (e.g., \texttt{lsqnonneg} function in Matlab or \texttt{nnls} function in Scipy) could not solve the IIR model \eqref{model_individual} within a reasonable computation time.

In our experiments, we have observed that \texttt{PGD} outperforms \texttt{PLSQR} with respect to the solution quality.
Specifically, we compute $\widehat{w}^{\texttt{PGD},|\Theta|,s}_i$ and $\widehat{w}^{\texttt{PLSQR},|\Theta|,s}_i$ by solving the IIR model via \texttt{PGD} and \texttt{PLSQR}, respectively, for all $|\Theta| \in \{25, 50, 75, 100\}$, $s \in \mathcal{S}$, and $i \in [N]$.
Then we compute the improvement achieved by \texttt{PGD} over \texttt{PLSQR} as follow:

{\begin{align}
100 \times \frac{ \| \widehat{w}^{\texttt{PLSQR},|\Theta|,s}_i - \widehat{w}^{\texttt{true}}_i\| - \| \widehat{w}^{\texttt{PGD},|\Theta|,s}_i - \widehat{w}^{\texttt{true}}_i\|   }{\| \widehat{w}^{\texttt{PLSQR},|\Theta|,s}_i - \widehat{w}^{\texttt{true}}_i\|},
\end{align}}
which are reported in Figure \ref{fig:pgd_lsqr}. The results imply that the IIR model solved by \texttt{PGD} could reconstruct more qualitative images from the tomographic data. 

\begin{figure}[h!]
\centering
\begin{subfigure}[b]{0.24\textwidth}
\centering  
\includegraphics[width=\textwidth,trim={1.0cm 1.0cm 1.0cm 0.25cm},clip]{./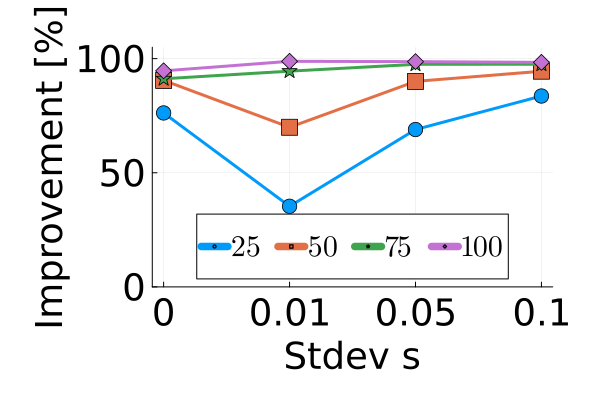} 
\caption{XRF-1} 
\end{subfigure}
\begin{subfigure}[b]{0.24\textwidth}
\centering  
\includegraphics[width=\textwidth,trim={1.0cm 1.0cm 1.0cm 0.25cm},clip]{./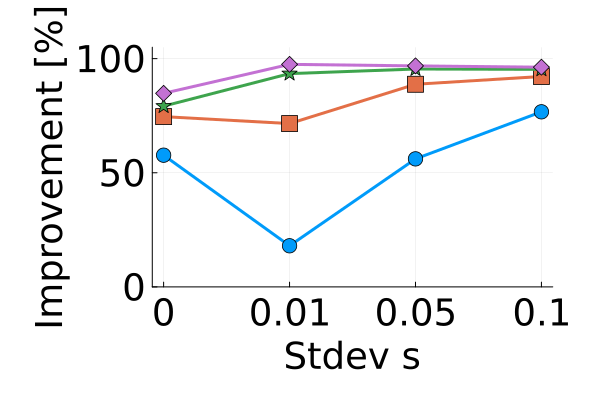} 
\caption{XRF-2} 
\end{subfigure}
\begin{subfigure}[b]{0.24\textwidth}
\centering  
\includegraphics[width=\textwidth,trim={1.0cm 1.0cm 1.0cm 0.25cm},clip]{./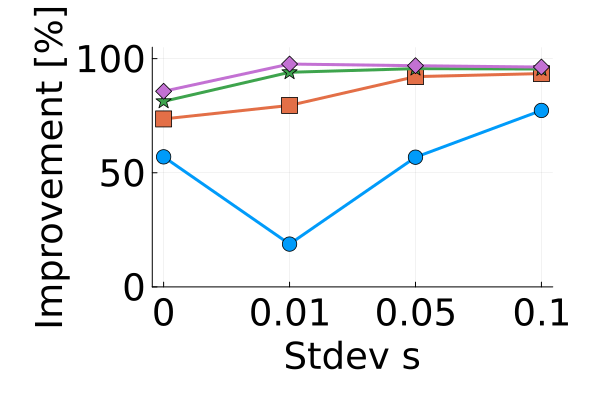} 
\caption{XRF-3} 
\end{subfigure}
\begin{subfigure}[b]{0.24\textwidth}
\centering  
\includegraphics[width=\textwidth,trim={1.0cm 1.0cm 1.0cm 0.25cm},clip]{./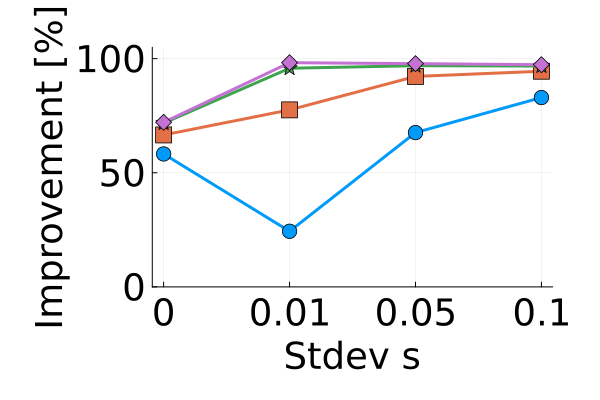} 
\caption{XRT} 
\end{subfigure}
\caption{Improvement achieved by \texttt{PGD} over \texttt{PLSQR}. }
\label{fig:pgd_lsqr} 
\end{figure}

\end{document}